\newtheorem{theorem}{Theorem}[section]
\newtheorem{lemma}[theorem]{Lemma}
\newtheorem{conj}[theorem]{Conjecture}
\newtheorem{property}{\textbf{Property}}
\newtheorem{ass}{\textbf{Assumption}}
\theoremstyle{definition}
\theoremstyle{remark}
\newtheorem{remark}[theorem]{Remark}
\numberwithin{equation}{section}
\def\O{{\Omega}}
\def\Oc{{\O_{\textup{c}}}}
\def\c{{\mathcal{C}}}
\def\I{{\mathcal{I}}}
\def\r{{\mathcal{R}}}
\def\T{{\mathcal{T}}}
\def\M{{\mathcal{M}}}
\def\B{{\mathcal{B}}}
\def\MB{\M_{\B}}
\def\R{{\mathbb{R}}}
\def\C{{\mathbb{C}}}
\def\N{{\mathbb{N}}}
\def\upd{{\textup{d}}}
\def\upe{{\textup{e}}}
\def\dom{\operatorname{dom}}
\def\BUC{\mathcal{BUC}}
\DeclareMathOperator{\diag}{diag}
\newcommand{\vect}[1]{\bm{\mathbf{#1}}}
 \title[An abstract framework for structured population models]{An abstract framework for a class of nonlocal structured population models: existence, uniqueness and stability of steady states}
\date\today
\author{J\'{e}rome Coville}
\address[J. C.]{INRAE PACA, Equipe BIOSP, Centre de Recherche d'Avignon, Domaine Saint Paul, Site Agroparc, 84914 Avignon cedex 9, France}
\email{jerome.coville@inrae.fr}
\author{L\'{e}o Girardin}
\address[L. G.]{CNRS, Institut Camille Jordan, Universit\'{e} Claude Bernard Lyon-1, 43 boulevard du 11 novembre 1918, 69622 Villeurbanne Cedex, France}
\email{leo.girardin@math.cnrs.fr}
\begin{document}

\begin{abstract}
This paper is concerned with the study of a class of nonlinear nonlocal functional evolution problems defined in an abstract Banach algebra. We introduce an abstract functional setting that encompasses a wide range of structured population models appearing in biomathematical literature. Within this framework, we analyze the well-posedness of the Cauchy problem and the existence of stationary solutions in the positive cone of the Banach algebra. By reviewing a large number of approaches, we also derive conditions for the local and global stability of these stationary solutions. Additionally, we explore the limits of these conditions by exhibiting explicit counterexamples. In particular, for mutation--selection models with symmetric mutation operators, we uncover both sufficient conditions for existence, uniqueness and stability, and counterexamples to existence or stability.
\end{abstract}

\dedicatory{We dedicate this work to Professor Hiroshi Matano with the deepest respect and admiration.}
\keywords{structured population models, nonlocal problems, dynamical systems, maximum principle, stability}
\subjclass[2010]{35B35, 35B50, 35K57, 35K90, 37L15, 92D25.}
\maketitle
\tableofcontents{}

\section{Introduction}\label{s:intro}

The purpose of this article is to study a class of nonlinear functional evolution equations defined on an abstract Banach algebra $E$ and to provide generic results under non-perturbative assumptions. 
This aim is primarily motivated by the observation that discrete and continuous logistic structured population models found in the literature are usually analyzed separately, 
although their analysis shows some similarity due to a common algebraic structure. 
The development of a common framework as well as a generic analysis seems appropriate in this context. 
Abstract functional frameworks are standard for dynamical systems that are monotone, \textit{i.e.} order-preserving
(\textit{cf.}, for instance and among many others, \cite{Amann_1976,Ogiwara_Hilhorst_Matano_2020,Weinberger_1982,Zhao_2017}). Here, on the contrary, 
we will deal with a certain class of non-monotone dynamical systems.

\subsection{An abstract framework}

\subsubsection{The spatial domain}

Let $N,N'\in\N^\star$. Let $\Oc\subset \R^N$ be a nonempty smooth bounded open connected set and $\O=\Oc\times[N']$, where as usual $[N']$ denotes the set $[1,N']\cap\N$. 
Elements of $\O$ are generically denoted $x=(x_{\textup{c}},x_{\textup{d}})$ with the continuous part $x_{\textup{c}}\in\Oc$ and 
the discrete part $x_{\textup{d}}\in[N']\cap\N$. 
The boundary $\partial\O=\partial\Oc\times[N']$ is denoted $\Gamma$.

When $N'=1$, $\O$ and $\Oc$ are isomorphically identified. Recall that $\BUC([N'],\R)$ can be isomorphically identified with $\R^{N'}$.
Similarly, in what follows, $\BUC(\overline{\O},\R)$ is isomorphically identified with $(\BUC(\overline{\Oc},\R))^{N'}$.

The set $\O$ is equipped with the measure $\mu$ which is the product of the Lebesgue measure on $\Oc$ 
and the counting measure on $[N']$. In particular $\mu(\O)=N'|\Oc|$.

\subsubsection{The Feller semigroup}

We define two (possibly unbounded) linear operators:
\begin{equation*}\setlength\arraycolsep{2pt}
    \begin{matrix}
        P: & \dom(P) & \to & \BUC(\O) \\
        & \varphi & \mapsto & \displaystyle\left[ x\in\O\mapsto \nabla\cdot\left(A\nabla\varphi\right)\left(x\right)+ q\left(x\right)\cdot\nabla\varphi\left(x\right) \right],
    \end{matrix}
\end{equation*}
\begin{equation*}\setlength\arraycolsep{2pt}
    \begin{matrix}
        S: & \dom(S) & \to & \BUC(\O) \\
        & \varphi & \mapsto & \displaystyle\left[ x\in\O\mapsto \int_{\O}[\varphi(y)-\varphi(x)-\nabla\varphi(x)\cdot(y-x)]J(x,y)\upd\mu(y) \right].
    \end{matrix}
\end{equation*}
Here and thereafter,
\begin{equation*}
    \nabla=
    \begin{pmatrix}
        \partial_{x_1} \\
        \vdots \\
        \partial_{x_N}
    \end{pmatrix}
    ,\quad
    A\in \left(\c^1\left(\overline{\Oc},\R^{N\times N}\right)\right)^{N'},\quad
    q\in \left(\c\left(\overline\Oc,\R^{N}\right)\right)^{N'},
\end{equation*}
and $J$ is a Caratheodory function:
\begin{equation*}
    \begin{cases}
       \forall x \in \O & J(x, \cdot) \text{ is measurable,} \\
        \text{for a.e. }y\in\O &  J(\cdot, y) \text{ is uniformly continuous}.
   \end{cases}
\end{equation*}
Moreover, $J$ is nonnegative and, 
for any $x_{\textup{d}}\in[N']\cap\N$, $A(\cdot,x_{\textup{d}})$ is symmetric and at least degenerate elliptic:
\begin{equation*}
    \forall x_{\textup{c}}\in\Oc \quad \forall\xi\in\R^{N}\quad \xi\cdot A(x_{\textup{c}},x_{\textup{d}})\xi\geq 0.
\end{equation*}

We will be interested in the (possibly unbounded) linear interior operator $\M=P+S$, 
supplemented with a linear boundary operator $\B = q_{\nu}\frac{\partial}{\partial\nu}+P_{\partial}+S_{\partial}^\Gamma+S_{\partial}^\Oc$,
where $q_{\nu}$ is a nonnegative function on $\Gamma$, $\frac{\partial}{\partial\nu}$ is the outward-pointing normal derivative on $\partial\Oc$,
\begin{equation*}\setlength\arraycolsep{2pt}
    \begin{matrix}
        P_{\partial}: & \dom(P_{\partial}) & \to & \BUC(\Gamma) \\
        & \varphi & \mapsto & \displaystyle\left[ x\in\Gamma\mapsto \nabla_{\partial}\cdot\left(A_{\partial}\nabla_{\partial}\varphi\right)\left(x\right)+ q_{\partial}\left(x\right)\cdot\nabla_{\partial}\varphi\left(x\right) \right],
    \end{matrix}
\end{equation*}
\begin{equation*}\setlength\arraycolsep{2pt}
    \begin{matrix}
        S_{\partial}^\Gamma: & \dom(S_{\partial}^\Gamma) & \to & \BUC(\Gamma) \\
        & \varphi & \mapsto & \displaystyle\left[ x\in\Gamma\mapsto \int_{\Gamma}[\varphi(y)-\varphi(x)-\nabla_{\partial}\varphi(x)\cdot(y-x)]J_{\partial}^\Gamma(x,y)\upd\mu_{\partial}(y) \right],
    \end{matrix}
\end{equation*}
\begin{equation*}\setlength\arraycolsep{2pt}
    \begin{matrix}
        S_{\partial}^\Oc: & \dom(S_{\partial}^\Oc) & \to & \BUC(\Gamma) \\
        & \varphi & \mapsto & \displaystyle\left[ x\in\Gamma\mapsto \int_{\O}[\varphi(y)-\varphi(x)]J_{\partial}^\Oc(x,y)\upd\mu(y) \right].
    \end{matrix}
\end{equation*}
The parameters $q_{\nu}$, $A_{\partial}$, $q_{\partial}$, $J_{\partial}^\Gamma$, $J_{\partial}^\Oc$ satisfy natural assumptions as well: $A_{\partial}$ is of class $\c^1$, symmetric and at least degenerate elliptic,
$q_{\nu}$ and $q_{\partial}$ are continuous, $J_{\partial}^\Gamma$ and $J_{\partial}^\Oc$ are nonnegative Caratheodory functions. The supports of $q_\nu$, $A_{\partial}$, $q_{\partial}$, $J_{\partial}^\Gamma(\cdot,y)$, 
$J_{\partial}^\Oc(\cdot,y)$ might be strict subsets of $\Gamma$.
The notations $\nabla_{\partial}$ and $\mu_{\partial}$ denote as expected the nabla operator on the submanifold $\partial\Oc$ and the product between the $N-1$-dimensional Hausdorff measure and
the counting measure on $[N']$ respectively.
Supplementary conditions on $\B$ are required for well-posedness; these will be implicitly part of
Assumption \ref{ass:compatibility} below. 

This framework includes Neumann boundary conditions, oblique derivative boundary conditions, nonlocal boundary
conditions (\textit{e.g.} for renewal equations) and Ventcel--Vi\v{s}ik second-order boundary conditions. It
is also possible that $\B=0$, in which case there are simply no boundary conditions.
Although this is already quite general, we note that the exact same analysis could be performed with 
$\B$ a periodic boundary operator (namely, by replacing $\Oc$ with the torus $\mathbb{T}^{N-1}$).
Nevertheless, the homogeneity of $\B$, chosen to ensure $\B 1=0$, forbids Dirichlet or Robin boundary 
conditions. Some of our methods and results could be adapted for such boundary conditions
but this is not immediate and these cases are therefore deliberately kept apart.

Since we are considering the sum $P+S$, if the first moment $x\mapsto\int_{\O}J(x,y)(x-y)\upd\mu(y)$
is in $\left(\c\left(\overline\Oc,\R^N\right)\right)^{N'}$, then up to changing $q$ in $P$, 
the first-order part in $S$ can be removed. This transformation changes $S$ into
a standard zeroth-order nonlocal dispersal operator. A similar remark holds for the boundary operator $\B$.

To summarize, for any $u\in\dom(\M)$, we consider the operator $\MB=(\M,\B)$ defined by:
\begin{equation}\label{definition_M}\tag{Fel}
    \MB u = \begin{dcases}
        Pu+Su & \text{in }\O, \\
        q_{\nu}\frac{\partial u}{\partial\nu}+P_{\partial}u+S_{\partial}^\Gamma u+S_{\partial}^\Oc u & \text{on }\Gamma.
        \end{dcases}
\end{equation}
In the paper, when the context is unambiguous or when the boundary operator $\B$ is trivial, $\M$ might
be identified with $\MB$.

Let $E\subset\BUC(\overline{\O},\R)$ be a unital commutative Banach algebra
\footnote{A unital commutative Banach algebra over the field $\R$ is a Banach space over $\R$ equipped with a 
multiplication operation such that the space is also a unital commutative associative algebra over $\R$ and
such that the norm is sub-multiplicative. Typical examples include $\R^{N'}$ or $\C^{N'}$ equipped with the 
Hadamard product and the norm $\|\cdot\|_{\infty}$, the space $\textup{Lip}(\overline{\Oc},\R)$ of Lipschitz
continuous functions equipped with the standard multiplication of functions (up to multiplication of the norm 
$\|\cdot\|_{\textup{Lip}}$ by a universal positive constant), the H\"{o}lder spaces 
$\c^\alpha(\overline{\O_c},\R)$ of H\"{o}lder continuous functions with exponent $\alpha\in(0,1)$ equipped 
with the same multiplication of functions (again up to a multiplication of the norm).

Being careful with the meaning given to products and ratios of functions, it is possible to extend our results
to more general Banach spaces. Since all applications we have in mind are in fact set in Banach algebras, 
we only consider this more convenient functional framework.}
continuously embedded in $\BUC(\overline{\O},\R)$ that satisfies the following assumption.

\begin{ass}[Compatibility of $E$ and $\MB$]\label{ass:compatibility}
    The Banach algebra $E$ is compatible with the operator $\MB$ in the following sense:
    \begin{equation*}
        \overline{\dom_E(\MB)}=E,\quad\MB(\dom_E(\MB))\subset E.
    \end{equation*}
    where $\dom_E(\MB)=\dom(\MB)\cap E$.
\end{ass}

We emphasize right now, to avoid any confusion, that $E$ might be much smaller than $\BUC(\overline{\O},\R)$. 
In some applications it will be $\R^{N'}$ (\textit{cf.} Assumption \ref{ass:reduction_to_ODEs} below).

The canonical ordering $\leq$ associated with the nonnegative cone $\BUC(\overline{\O},\R^+)$ defines the nonnegative cone 
$E^+=\BUC(\overline{\O},\R^+)\cap E$ of $E$. 
We assume from now on that the cone $E^+$ has a nonempty interior and that it contains the constant function $1=1_E$. 
The duality product between $E$ and its topological dual $E'$ is denoted frow now on $\langle\cdot,\cdot\rangle_E$. 
Since $E\subset\BUC(\overline{\O},\R)$, any function in $\operatorname{int}(E^+)$
admits a positive infimum in $\O$ and the inverse of such a function is again in $\operatorname{int}(E^+)$.

Since $\O$ is bounded, $\BUC(\O,\R)\subset L^2(\O,\R)$. 
Therefore $E$ inherits the canonical scalar product $\left(\cdot|\cdot\right)_{L^2}$ of $L^2(\O,\R)$. 
Equipped with this scalar product, $E$ is an inner product space.
In general, the canonical norm $\|\cdot\|_E$ differs from, and is not equivalent to, the norm $\|\cdot\|_{L^2(\O)}$, 
and so $E$ is in general not a Hilbert space.

The $L^2$ adjoint of the densely defined operator $\MB$ is denoted $\MB^\star=(\M,\B)^\star$. 
An explicit formula for $\MB^\star$ could be deduced from the
explicit form of $\MB$ presented in \eqref{definition_M} but we will have no use for it.

The following two properties are obvious \cite{Henry_1981}.

\begin{property}\label{property:strong_continuity}
   $\MB$ is the infinitesimal generator of a strongly continuous semigroup $(\T(t))_{t\geq 0}$ in $E$.
\end{property}

\begin{property}\label{property:one_in_kernel}
$1_E\in \ker(\MB)$ and $1_E\in\dom(\MB^\star)$.
\end{property}

The operator is also irreducible, in the following sense \cite[Proposition 8.3 p. 186]{Schaefer_1974}. 
Here and thereafter, $\dom_{E^+}(\MB)=\dom_E(\MB)\cap E^+$.

\begin{ass}[Irreducibility]\label{ass:irreducibility}
    Let $u_0\in \dom_{E^+}(\MB)\setminus\{0\}$ and $\psi\in (E')^{+}\setminus\{0\}$. 
    
    Then there exists $t\in(0,+\infty)$ such that:
    \begin{equation}
        \langle\psi,\T(t)u_0\rangle_E>0.
    \end{equation}
\end{ass}

Under this standing assumption, the operator $\MB$ is a \textit{strongly positive resolvent operator} in the 
sense of the following property.

\begin{property}\label{property:strongly_positive_resolvent}
    There exists a positive regular value $C>0$ such that $(-\M +C,\B)^{-1}$ is continuous and strongly positive, 
    \textit{i.e.} it maps $E^+\setminus\{0\}$ into $\operatorname{int}(E^+)$.
\end{property}

Throughout the paper, when we write $\varphi=(-\M+C,\B)^{-1}f\in\operatorname{int}(E^+)$ with $f\in E^+\setminus\{0\}$,
we mean that
\begin{equation*}
    \begin{dcases}
        f\in E^+\setminus\{0\}, & \\
        \varphi\in\operatorname{int}(E^+), & \\
        (-\M+C)\varphi = f & \text{in }\O, \\
        \B\varphi = 0 & \text{on }\Gamma.
    \end{dcases}
\end{equation*}
Similar notations will be used for other operators built upon $\MB$. 

Property \ref{property:strongly_positive_resolvent} holds for all regular values $C$ if and only if it holds 
for one regular value. The adjoint $\MB^\star$ is also a strongly positive resolvent operator.
   
From \cite{Arendt1987,Kato1984,Bony_Courrege_Priouret_1966,Bony_Courrege_Priouret_1968}, 
the form of $\MB$ presented in \eqref{definition_M} is roughly speaking generic if the three properties above are understood as assumptions on $\MB$. 
In any case, this form covers all applications we have in mind (see below). 

\subsubsection{The abstract semilinear nonlocal Cauchy problem}

Let $p\geq 1$ and $r,b,c\in\operatorname{int}(E^+)$. Let
\begin{equation}
    \begin{matrix}
        \r: & E\times\O & \to & \R \\
        & (v,x) & \mapsto & \displaystyle r(x)-b(x)\int_{\O}v(y)^p c(y)\upd\mu(y).
    \end{matrix}
\end{equation}

We are interested in the solution $u\in\mathcal{C}^1((0,+\infty),E)\cap\mathcal{C}([0,+\infty),\dom_{E^+}(\MB))$ of, for a given $u_0\in\dom_{E^+}(\MB)$:
\begin{equation}\label{general_system}\tag{Abs}
    \begin{dcases}
        \partial_t u(t,x) = \M  u(t,x) + u(t,x)\r(u(t),x)  & \text{for all }(t,x)\in(0,+\infty)\times\O, \\
        \B u(t,x) = 0 & \text{for all }(t,x)\in(0,+\infty)\times \Gamma, \\
        u(0,x) = u_0(x) & \text{for all }x\in\overline{\O}.
    \end{dcases}
\end{equation}
Above, the notation $u(t)$ denotes the function $x\mapsto u(t,x)$. This notation will be used repeatedly in the paper.
The context will make this notation unambiguous.

The following assumption is not a standing assumption but will be mobilized repeatedly thereafter because of its special consequence, detailed below.

\begin{ass}\label{ass:reduction_to_ODEs}
    $E=\R^{N'}$, $|\Oc|=1$, $\B=0$.
\end{ass}

\begin{property}[Reduction to ODEs]
    Assume Assumption \ref{ass:reduction_to_ODEs} holds true.
    
    Then the scalar product $(\cdot|\cdot)_{L^2}$ restricted to $E\times E$ coincides with the canonical scalar product of $E$, 
    $\M$ is identified with $0$ if $N'=1$ or else to an essentially nonnegative irreducible square matrix $\mathbf{M}\in\mathbb{M}_{N'}(\R)$
    whose kernel contains the column vector $\mathbf{1}=(1,1,\dots,1)^{\textup{T}}$,
    $u_0$, $r$, $b$, $c$ are respectively identified with column vectors $\vect{u}_0$, $\vect{r}$, $\vect{b}$, $\vect{c}$ in $\R^{N'}$, and
    the solution $u$ of \eqref{general_system} is identified with a vector $\vect{u}\in\c^1([0,+\infty),\R^{N'})$ solution of the following 
    system of ordinary differential equations:
    \begin{equation}\label{discrete_ode_system}\tag{ODE}
        \begin{dcases}
            \dot{\vect{u}}=\mathbf{M}\vect{u}+\left(\diag(\vect{r})-\left(\sum_{i=1}^{N'}  c _i u^p_i\right)\diag(\vect{b})\right)\vect{u}, \\
            \vect{u}(0)=\vect{u}_0.
        \end{dcases}
    \end{equation}
\end{property}

\subsubsection{The underlying Hilbert space}

In the paper, we will repeatedly refer to the underlying Hilbert space, $H$, equipped with the inner product $(\cdot|\cdot)_{H}$:
\begin{equation}\label{def:H}
    H=\begin{cases}
        \C^{N'} & \text{if Assumption \ref{ass:reduction_to_ODEs} holds true}, \\
        L^2(\O,\C) & \text{otherwise.}
    \end{cases}
\end{equation}
This is the Hilbert space in which it is most convenient to conduct spectral analysis. 
The inner product of $H$ always coincides the inner product of $L^2(\O,\C)$. From now on, it will be denoted $\left(\cdot|\cdot\right)$ for brevity.

\subsection{Scope of the paper}
In this paper, we are interested in the positive equilibria of \eqref{general_system}, more precisely in their
existence, uniqueness and stability properties. 

When Assumption \ref{ass:reduction_to_ODEs} holds true and $N'=1$, then \eqref{general_system} reduces to the standard logistic 
ordinary differential equation. 
This case is of course completely understood: the positive equilibrium is unique and globally asymptotically stable. 
As is well known, this can be understood as a consequence of the comparison principle.

However, when $N'>1$ or Assumption \ref{ass:reduction_to_ODEs} fails, the equation is truly nonlocal and the comparison principle breaks.
Without comparison principle, much remains to be understood. The contribution of this paper is a significant step
forward. On one hand, we identify wide non-perturbative parameter regimes where the equilibrium remains unique and locally 
asymptotically stable, and we also identify significant parameter regimes for global asymptotic stability. 
On the other hand, we show by means of counterexamples that these parameter regimes cannot be extended 
indefinitely and that many interesting open questions subsist.

\subsection{Examples from the biomathematical literature}
As mentioned above, a large class of models for structured populations found in the existing 
literature can be reformulated as \eqref{general_system} by choosing appropriately the parameters. 

When Assumption \ref{ass:reduction_to_ODEs} is satisfied and in addition $p=1$ and $c=1$, then the nonlocal functional equation 
\eqref{general_system} reduces to the well-known Lotka-Volterra competition system with mutations, widely studied in the literature \cite{Bates2011,Burger1994,Buerger2000,Cai2015,Crow1970,Hofbauer1985}.
Without the restriction $p=1$ and with a symmetric $\mathbf{M}$, \eqref{discrete_ode_system} was studied in \cite{Coville_Fabre_}.

The same class of systems appears in age-structured population models or size-structured population models. 
In particular, recently, the system \eqref{discrete_ode_system} with $N'>2$ and $p=1$ appeared in \cite{Ress_Traulsen_Pichugin_2022} 
for a size-structured cell population model. 
There, the matrix $\mathbf{M}$ was far from being symmetric and it was more reminiscent of a Frobenius companion matrix (or Leslie matrix in ecology).

Similarly, when $N'=1$, $\O\simeq\Oc$, $E=\mathcal{C}^\alpha(\overline{\O})$ for some 
H\"{o}lder exponent $\alpha\in(0,1)$ and $\MB$ is the Laplace operator with Neumann boundary conditions 
$\Delta_N$, the equation \eqref{general_system} becomes the following reaction--diffusion problem:
\begin{equation*}
    \begin{dcases}
        \partial_t u(t,x) = \Delta u(t,x) + u(t,x)\r(u(t),x) & t>0,\,x\in\O, \\
        \r(v,x)= r(x)-b(x)\int_{\O}v(y)^p c(y)\upd y & v\in \mathcal{C}^\alpha(\overline{\O}),\ x\in\O, \\
        \partial_\nu u(t,x) = 0 & t>0,\,x \in \Gamma, \\
        u(0,x) = u_0(x) & x\in\overline{\O}.
   \end{dcases}
\end{equation*}
It was studied in \cite{Jabin_Liu_2017} for $p=2$ and $b=c$. With $p=1$ and $c=1$, it reduces to a well-known problem
modeling a population subject to growth, mutation and competition and where the strength of the competition depends of 
each trait, \textit{cf.} for instance \cite{Arnold2012,Coville2013-prep,Desvillettes2008,Perthame2007,Prevost2004}. 

Nonlocal diffusion problems can also be considered by taking $N'=1$, $\O\simeq\Oc$, 
$E=\mathcal{W}^{1,\infty}(\O)\simeq\textup{Lip}(\overline{\O})$ and $\MB$ the generator of a jump process, \textit{e.g.}:
\begin{equation*}
    \begin{dcases}
        \partial_t u(t,x) = \int_{\O}(u(t,y)- u(t,x))J(x,y)\upd y + u(t,x)\r(u(t),x) & t>0,\ x\in\O, \\
        \r(v,x) = r(x)-b(x)\int_{\O}v(y)^p \upd y & v\in \textup{Lip}(\overline{\O}),\ x\in\O, \\
        u(0,x) = u_0(x) &x\in\overline{\O}.
   \end{dcases}
\end{equation*}
This model is also well studied in quantitative genetics, \textit{cf.} \cite{Bonnefon2017,Buerger2000,Crow1970,Coville2013,Gil2017,Kingman1978,Calsina2007,Calsina2005}. 

When $N'=1$, $\O\simeq\Oc=(0,a^\dagger)$, $E=\BUC([0,a^\dagger])$, $\M$ is a first-order derivative, $\B=S_{\partial}^\Oc$ supported on $\{0\}$, $r=1$ and for the sake of this example the spatial 
variable $x$ is rather denoted $a$, we obtain:
\begin{equation*}
    \begin{dcases}
        \partial_t u(t,a) + \partial_a u(t,a) = u(t,a)\r(u(t),a) & t>0,\ a\in(0,a^\dagger) \\
        \r(v,a) = 1-b(a)\int_{0}^{a^\dagger}v(a)c(a)\upd a & v\in \BUC([0,a^\dagger]),\ a\in(0,a^\dagger) \\
        u(t,0) = \int_0^{a^\dagger}\frac{\beta(a)}{\|\beta\|_{L^1}}u(a)\upd a & t>0, \\
        u(0,a) = u_0(a) & x\in[0,a^\dagger].
    \end{dcases}
\end{equation*}
By multiplying by $\upe^{-a}$ and setting $v(t,a)=\upe^{-a}u(t,a)$, $f(a)=\upe^a c(a)$ and $\gamma(a)=\upe^a \beta(a)/\|\beta\|_{L^1}$,
we recover the classical logistic Gurtin--MacCamy equation for age-structured populations \cite{Webb_1985,Gurtin_MacCamy}:
\begin{equation*}
    \begin{dcases}
        \partial_t v(t,a) + \partial_a v(t,a) = -v(t,a)b(a)\int_{0}^{a^\dagger}v(t,a)f(a)\upd a & t>0,\ a\in(0,a^\dagger) \\
        v(t,0) = \int_0^{a^\dagger}\gamma(a)v(t,a)\upd a & t>0, \\
        v(0,a) = v_0(a) & a\in[0,a^\dagger].
    \end{dcases}
\end{equation*}
This operator $\MB$ satisfies Assumption \ref{ass:irreducibility}, but contrarily to the preceding examples,
the time $t$ in Assumption \ref{ass:irreducibility} cannot be chosen arbitrarily small uniformly with
respect to $u_0$ and $\psi$.
The Gurtin--MacCamy equation is built upon a transport equation at speed $1$, and so initial conditions in 
$\partial E^+$ with compact support strictly included in $\O$ will generate solutions that remain in 
$\partial E^+$ and compactly supported for some time.

The logistic Gurtin--MacCamy with spatial diffusion \cite{Gurtin_MacCamy-2} can also be obtained in a similar way. 
Similarly, continuous size-structured logistic population models can be obtained. 

Last, with $N'\geq 2$ and without Assumption \ref{ass:reduction_to_ODEs}, we can obtain for instance a system of the following form:
\begin{equation*}
    \begin{dcases}
        \partial_t\vect{u}(t,x) = \left[\mathbf{M}(x) + \diag(\vect{d}\Delta+\r(\vect{u}(t),x))\right]\vect{u}(t,x)& t>0,\ x\in\Oc, \\
        \r(\vect{v},x) = \vect{r}(x)-\left(\sum_{i=1}^{N'} \int_{\Oc} v_i(y)^p c_i(y)\upd y\right)\vect{b}(x) & \vect{v}\in(\c^{\alpha}(\overline{\Oc}))^{N'},\ x\in\Oc, \\
        \partial_\nu \vect{u}(t,x) = \vect{0} & t>0,\ x\in\partial\Oc, \\
        \vect{u}(0,x)=\vect{u}_0(x) & x\in\overline{\Oc}.
  \end{dcases}
\end{equation*}
Such models are related to so-called Fisher--KPP systems or mutation--diffusion--selection models 
\cite{Dockery_1998,Girardin_2017,Leman_Meleard_,Cantrell_Cosner_Yu_2018,Gueguezo_Doumate_Salako_2024,Mirrahimi_2016,Griette_Matano_2025}. 
The combination of both interphenotypic competition and nonlocal competition in space is not standard in the 
literature but could very reasonably be considered in a variety of situations.
It would also be natural to replace certain Laplacian diffusion operators $d_i\Delta$ by nonlocal diffusion operators $d_i(J\star u-u)$, 
or even by $0$, in order to model how different phenotypes have different dispersal modes.
As long as at least one phenotype keeps a non-degenerate local or nonlocal dispersal term, the whole operator $\MB$ remains irreducible.

\subsection{A short review on known stability results}
Below, instead of vainly attempting to give an exhaustive review of the wide literature where these models have 
been mathematically investigated, we focus on the most significant results, that inspire our work. This selected state
of the art will in particular motivate our special attention to the case $b\notin \R 1_E$.

\subsubsection{When $N'=2$ and Assumption \ref{ass:reduction_to_ODEs} holds}

In the celebrated article \cite{Dockery_1998}, among many other results, the global asymptotic stability of the unique
equilibrium was proved in the special case $p=b=c=1$. 

Twenty years later, the two-dimensional case with $p=1$ and without restrictions on $b$ or $c$ was entirely settled.
Indeed, \cite[Theorem 3.8]{Cantrell_Cosner_Yu_2018} showed, without conditions on $b$ or $c$,
that the positive equilibrium is globally asymptotically stable. 
The proof relies upon the Bendixson--Dulac and Poincaré--Bendixson theorems. 
These two theorems are specific to planar dynamical systems and do not apply with $N'\geq 3$.

When $b=1$ and $p\geq 1$, with no restriction on $c$, the global asymptotic stablity of the unique equilibrium 
was confirmed by the second author as part of a larger result \cite[Theorem 1.3]{Girardin_2017}.

\subsubsection{When $N'\in[3,+\infty)$ and Assumption \ref{ass:reduction_to_ODEs} holds}

The aforementioned result \cite[Theorem 1.3]{Girardin_2017} actually covers the full parameter range $N'\geq 2$, $p\geq 1$ and $b=1$.

When $b\notin \R 1_E$, the existence and uniqueness of the positive equilibrium were known for a long time, but its local 
or global stability remained elusive, especially in the case of a symmetric matrix $\vect{M}$.

\subsubsection{When $N'=1$ and Assumption \ref{ass:reduction_to_ODEs} does not hold}

When $-\MB$ satisfies a sufficiently strong spectral theorem (for instance, it is the Neumann Laplacian, whose resolvent 
is self-adjoint on $L^2(\O,\R)$ and compact) and $b=1$, the methods used by the second author in 
\cite[Theorem 1.3]{Girardin_2017} can be adapted. 
For the sake of completeness, we will detail this proof below. 

When $\MB$ is line-sum-symmetric 
\cite{Cantrell_Cosner_Lou_2012,Cantrell_Cosner_Lou_Ryan_2012,Eaves_1985,Girardin_Griette_2020}, 
$b=c$ and $p=1$, another method of proof was used previously for the nonlocal Fisher--KPP equation 
\cite{Berestycki_Nadin_Perthame_Ryzhik}. The nonlocal Fisher--KPP equation is closely related to \eqref{general_system},
with the scalar product $\left(c|u\right)$ replaced by a spatial convolution $c\star u$ and with the bounded domain 
$\Oc$ replaced by $\R^N$.

As observed in \cite{Jabin_Liu_2017}, a special gradient flow structure appears when $-\MB$ is the Neumann
Laplacian, $p=2$ and $b=c$. This gradient flow structure makes it possible to prove 
the global asymptotic stability of the positive equilibrium. We will present an adaptation of this proof, 
where the Neumann Laplacian is replaced by a more general self-adjoint operator with compact resolvent. 

When $p>2$, to the best of our knowledge, the global stability is an entirely open problem.


When $-\MB$ is a nonlocal diffusion operator, the first author has shown in a series
of works \cite{Bonnefon2017,Coville2013} that even the existence of the equilibrium can fail. 
We will recall in this paper how such a counterexample can be constructed.

\subsubsection{When $N'>1$ and Assumption \ref{ass:reduction_to_ODEs} does not hold}

To the best of our knowledge, this problem is entirely open. However a closely related problem with nonlocal
competition only in the continuous variable $x_{\textup{c}}$ or only in the discrete variable $x_{\textup{d}}$
has been studied in several papers (\textit{cf.} 
\cite{Dockery_1998,Girardin_2017,Leman_Meleard_,Cantrell_Cosner_Yu_2018,Gueguezo_Doumate_Salako_2024,Mirrahimi_2016,Griette_Matano_2025}
and references therein).

\subsection{Mild solution, well-posedness and pointwise nonnegativity}

The data of the problem being regular enough, and by Property \ref{property:strong_continuity}, for $u_0\in\dom_E(\MB)$, the Cauchy problem 
\eqref{general_system} possesses a global \emph{mild} solution. This is a weak solution 
$u\in\mathcal{C}([0,+\infty),E)$, explicitly given by the Duhamel principle using the semigroup 
generated by the operator $\MB$. Formally, a mild solution $u$ of \eqref{general_system} is a solution to the following problem:
\begin{equation*}
u(t)(x) = \T(t)u_0(x) + \int_0^t \T(t')u(t',x)\r(u(t'),x) \upd t',
\end{equation*}
\textit{cf.}, for instance, the construction in \cite{Cabre2013, Henry_1981, Pazy1983}. 
By Lipschitz continuity and a standard fixed point argument, there exists a mild solution $u$ defined in $(0,T)$ for some maximal time $T>0$. 

For any $T'\in(0,T)$, $u$ is bounded in $L^\infty((0,T'),E)$, and so is $a:(t,x)\mapsto\r(u(t),x)$.
It follows from the comparison principle applied to the nonautonomous semigroup generated by $(\M +a,\B)$ that, if
$u_0\in\dom_{E^+}(\MB)$, then $u(t)\in E^+$ for all $t\in(0,T')$. Subsequently, $a(t,x)\leq r(x)$ and
$\partial_t u\leq \M  u+ru$, and it follows again from the comparison principle that, 
for some $C>0$ independent of $T'$, $0\leq u(t,x)\leq C\upe^{CT'}1_E$ for all $(t,x)\in(0,T')\times\O$. 
Passing by continuity to the limit $T'\to T$, it follows that $\sup_{t\in(0,T)}\|u(t)\|_{E}<+\infty$.

Hence any solution of \eqref{general_system} with initial condition $u_0\in\dom_{E^+}(\MB)$ admits $T=+\infty$ as maximal time of existence
\cite[Theorem 3.3.4]{Henry_1981} and is nonnegative at all $(t,x)\in[0,+\infty)\times\overline{\O}$.

Other standard arguments \cite{Pazy1983} show that the mild solution $u$ with initial condition
$u_0\in E^+$ is a \textit{strong} solution, \textit{i.e.}$ u\in\mathcal{C}^1((0,+\infty),E)\cap\mathcal{C}([0,+\infty),\dom_{E^+}(\MB))$.

We will show later on that the global boundedness in space-time is a much trickier issue. Despite common knowledge on
equations of Fisher--KPP type, here it might actually fail, due to the interplay between local and nonlocal operators.



\subsection{Principal spectral theory with boundary conditions}

Next, let us recall some useful spectral quantities attached to $\M $. We consider:
\begin{equation*}
    \lambda_1(-\MB)=\sup\left\{\lambda\in\R \ \middle|\ \exists \varphi \in \dom_{E^+}(\MB)\setminus\{0\}\quad 
    \begin{dcases}
        -\M\varphi \geq \lambda\varphi & \text{in }\O \\
        \B\varphi = 0 & \text{on }\Gamma
    \end{dcases}
    \right\}.
\end{equation*}
The set whose supremum is considered is nonempty; indeed, $\varphi=1_E\in\ker(\MB)$ implies that $0$ belongs to this set. Hence $\lambda_1(-\MB)\geq 0$. 
To show that it is actually zero, we assume on the contrary the existence of 
$\lambda>0$, $\varphi\in \dom_{E^+}(\MB)\setminus\{0\}$ such that
$-\M\varphi\geq\lambda\varphi$ and $\B\varphi= 0$. Let $f=-\M\varphi-\lambda\varphi$ in $\O$ and extend
it by continuity on $\overline{\O}$. By Assumption \ref{ass:compatibility}, $f\in E$, and by assumption on 
$\varphi$, $f\in E^+$. Moreover, $f+(C+\lambda)\varphi\in E^+\setminus\{0\}$.
By applying the resolvent $(-\M +C,\B)^{-1}$ with $C>0$ given by Property \ref{property:strongly_positive_resolvent},
we deduce $\varphi\in\operatorname{int}(E^+)$. 
Subsequently $(f+(C+\lambda)\varphi,\B\varphi-g)\in\operatorname{int}(E^+)$.
Since $E$ is a Banach algebra, for any $\varepsilon>0$ sufficiently small, $v_\varepsilon = f+(C+\lambda)\varphi-\varepsilon 1_E\in \operatorname{int}(E^+)$.
Since $(-\M+C,\B)^{-1} 1_E = \frac{1}{C}1_E$,
\begin{equation*}
    (-\M+C,\B)^{-1}v_\varepsilon = \varphi-\frac{\varepsilon}{C}1_E,
\end{equation*}
and by strong positivity of the resolvent again, $\varphi-\frac{\varepsilon}{C}1_E\in\operatorname{int}(E^+)$.
Let 
\begin{equation*}
    \varepsilon^\star = \sup\left\{ \varepsilon>0\ \middle|\ v_\varepsilon\in \operatorname{int}(E^+)\right\}.
\end{equation*}
Then by continuity $v_{\varepsilon^\star}\in\partial E^+$. If $v_{\varepsilon^\star}=0$, then $\varphi-\frac{\varepsilon^\star}{C}1_E=0$,
and then $-\M\varphi = -\frac{\varepsilon^\star}{C}\M 1_E = 0$ contradicts $-\M\varphi\geq\lambda\varphi$ due to $\lambda>0$.
Thus $v_{\varepsilon^\star}\in\partial E^+\setminus\{0\}$. Since $v_{\varepsilon^\star}$ is still in the domain of
$(-\M+C,\B)^{-1}$, by strong positivity, $\varphi-\frac{\varepsilon^\star}{C}1_E\in\operatorname{int}(E^+)$. But now
$\left(f+\lambda\varphi+C\left(\varphi-\frac{\varepsilon^\star}{C}1_E\right),\B\varphi-g\right)\in\operatorname{int}(E^+)$ and this contradicts
the optimality of $\varepsilon^\star$, and subsequently the existence of $\lambda>0$.

Hence $\lambda_1(-\MB)=0$. In the literature, the quantity $\lambda_1(-\MB)$ is often referred to as \textit{the principal eigenvalue} of the operator $-\MB$. 

A similar comparison argument using the strong positivity of the resolvent shows that $\ker(\MB)=\R 1_E$, namely $0$ is a simple eigenvalue.

Similarly, by $L^2$ duality, we define \textit{the adjoint principal eigenvalue}:
\begin{equation*}
    \lambda_1(-\MB^{\star})=\sup\left\{\lambda\in\R\ \middle|\ \exists\psi\in \operatorname{int}(E^{+})\quad\sup_{\varphi\in \dom_{E^+}(\MB)}\left(\psi|-\M\varphi-\lambda\varphi\right)\geq 0   \right\}.
\end{equation*}
By definition of $\lambda_1(-\MB)=0$, for any $\varepsilon>0$, there exists $\lambda\in(-\varepsilon,0)$ and 
$\varphi\in\dom_{E^+}(\MB)$ such that $-\M\varphi-\lambda\varphi\geq 0$ and $\B\varphi\geq 0$,
whence, by scalar product with $\psi=1_E$, $\lambda_1(-\M^\star)\geq 0$.
Assuming now by contradiction that $\lambda_1(-\M^\star)>0$, we set $\lambda=\frac12\lambda_1(-\M^\star)$ and, by testing against $\varphi=1_E$, we deduce
the existence of $\psi\in\operatorname{int}(E^+)$ such that $(\psi|\lambda 1_E)\leq 0$. This is an obvious contradiction. 
Hence $\lambda_1(-\MB^\star)=\lambda_1(-\MB)=0$. Note that the existence of an adjoint principal eigenfunction is not established.

It is important to note that the characterization of $\lambda_1(-\MB)$ as an eigenvalue does not extend to 
$\lambda_1(-\M+a,\B)$ with arbitrary $a\in E$. For general $a\in E$, it can be verified that
\begin{equation*}
\lambda_1(-\M+a,\B)=\left\{\lambda\in\R \ \middle|\ \exists \varphi \in \dom_{E^+}(\MB)\setminus\{0\}\quad 
    \begin{dcases}
        (-\M +a)\varphi\geq\lambda\varphi & \text{in }\O \\
        \B\varphi = 0 & \text{on }\Gamma
    \end{dcases}
    \right\},
\end{equation*}
is still well defined and finite, nevertheless the existence of an eigenfunction 
$\varphi\in\operatorname{int}(E^+)$ such that $(-\M+a)\varphi=\lambda_1(-\M+a)\varphi$ might fail. We will exploit
such counterexamples in the article. In such cases, $\lambda_1$ is referred to as the \textit{principal point spectrum} instead.

We also emphasize, and this is in fact related to the preceding remark, that we assume no specific compactness property 
on $\MB$. The Krein--Rutman theorem is typically unavailable. 
This is necessary for the abstract framework to contain nonlocal diffusion.

\subsection{Main results}

Let us now state our main results.

The first result is concerned with uniform bounds.

\begin{theorem}[A priori bounds]\label{thm:uniform_bound_dynamical_system}
   Any solution $u$ of \eqref{general_system} is bounded in \\ $L^\infty((0,+\infty),L^1(\O,\R))$ by a constant that only depends on:
   \begin{equation*}
       \|u_0\|_{L^1},\ p,\ \mu(\O),\ \|r\|_{L^\infty},\ \|1/b\|_{L^\infty},\ \|\MB^\star 1\|_{L^\infty},\ \|1/ c \|_{L^\infty}
   \end{equation*}
   or, if $u$ is a stationary solution, on:
   \begin{equation*}
       p,\ \mu(\O),\ \|r\|_{L^\infty},\ \|1/b\|_{L^\infty},\ \|\MB^\star 1\|_{L^\infty},\ \|1/ c \|_{L^\infty}.
   \end{equation*}

   Furthermore, if $\sup_{v\in H}\left( v|\M v \right)\leq 0$ and $p\geq 2$, then any solution $u$ is bounded in $L^\infty((0,+\infty),L^2(\O,\R))$
   by a constant that only depends on 
   \begin{equation*}
       \|u_0\|_{L^2},\ p,\ \mu(\O),\ \|r\|_{L^\infty},\ \|1/b\|_{L^\infty},\ \|1/ c \|_{L^\infty}
   \end{equation*}
   or, if $u$ is a stationary solution, on
   \begin{equation*}
       p,\ \mu(\O),\ \|r\|_{L^\infty},\ \|1/b\|_{L^\infty},\ \|1/ c \|_{L^\infty}.
   \end{equation*}
\end{theorem}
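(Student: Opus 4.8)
The plan is to test \eqref{general_system} against the constant weight $1_E$ (to control the $L^1$ norm) and against the solution itself (to control the $L^2$ norm), and in each case to reduce to a scalar Bernoulli-type differential inequality
\[
    y'(t)\le \alpha\, y(t)-\beta\, y(t)^{1+\theta},\qquad \alpha,\beta,\theta>0 .
\]
Its nonnegative solutions satisfy $y(t)\le\max\{y(0),(\alpha/\beta)^{1/\theta}\}$ for all $t\ge 0$, since $y(t)>(\alpha/\beta)^{1/\theta}$ forces $y'(t)<0$; and they satisfy $y\le(\alpha/\beta)^{1/\theta}$ whenever $y'\equiv 0$. This is the whole mechanism: the evolutionary bounds will come with a $y(0)$ term, and the stationary bounds without it. Throughout, recall that a solution of \eqref{general_system} lies in $\mathcal{C}^1((0,+\infty),E)\cap\mathcal{C}([0,+\infty),\dom_{E^+}(\MB))$, hence is nonnegative and regular enough for the computations below, and that a stationary solution is in particular an element of $\dom_{E^+}(\MB)$.

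\emph{The $L^1$ bound.} Using Property \ref{property:one_in_kernel}, the definition of the $L^2$-adjoint, and the fact that $\MB^\star 1$ is identified with an $L^\infty$ function (implicit in the statement, and readily read off \eqref{definition_M}), one has for $t>0$
\[
    \frac{\upd}{\upd t}\left(1|u(t)\right)=\left(\MB^\star 1|u(t)\right)+\int_\O u(t,x)\,\r(u(t),x)\,\upd\mu(x).
\]
Set $I(t)=\int_\O u(t,y)^p c(y)\,\upd\mu(y)\ge 0$. Since $u(t)\ge 0$, we have $\left(\MB^\star 1|u(t)\right)\le\|\MB^\star 1\|_{L^\infty}\|u(t)\|_{L^1}$ and $\int_\O u(t,x)r(x)\,\upd\mu(x)\le\|r\|_{L^\infty}\|u(t)\|_{L^1}$; from $b\ge\|1/b\|_{L^\infty}^{-1}$ we get $\int_\O u(t,x)b(x)\,\upd\mu(x)\ge\|1/b\|_{L^\infty}^{-1}\|u(t)\|_{L^1}$; and from $c\ge\|1/c\|_{L^\infty}^{-1}$ together with Jensen's inequality for the probability measure $\mu/\mu(\O)$ and the convex map $t\mapsto t^p$, $I(t)\ge\|1/c\|_{L^\infty}^{-1}\mu(\O)^{1-p}\|u(t)\|_{L^1}^{p}$. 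Altogether $y(t):=\|u(t)\|_{L^1}$ satisfies the announced inequality with $\alpha=\|\MB^\star 1\|_{L^\infty}+\|r\|_{L^\infty}$, $\beta=\|1/b\|_{L^\infty}^{-1}\|1/c\|_{L^\infty}^{-1}\mu(\O)^{1-p}$ and $\theta=p$, which yields both the evolutionary bound (through $y(0)=\|u_0\|_{L^1}$) and the stationary bound (where $y'\equiv 0$, so $y(0)$ is irrelevant).

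\emph{The $L^2$ bound.} Assume now $\sup_{v\in H}\left(v|\M v\right)\le 0$ and $p\ge 2$. Testing \eqref{general_system} against $u(t)$,
\[
    \tfrac12\frac{\upd}{\upd t}\|u(t)\|_{L^2}^2=\left(u(t)|\M u(t)\right)+\int_\O u(t,x)^2\,\r(u(t),x)\,\upd\mu(x)\le\int_\O u(t,x)^2\,\r(u(t),x)\,\upd\mu(x),
\]
the last inequality being the hypothesis applied to $v=u(t)$, which is legitimate since $u(t)\in\dom_E(\MB)$ and hence $\M u(t)\in E\subset L^2(\O)$ by Assumption \ref{ass:compatibility}. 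As before $\int_\O u(t,\cdot)^2 r\le\|r\|_{L^\infty}\|u(t)\|_{L^2}^2$ and $\int_\O u(t,\cdot)^2 b\ge\|1/b\|_{L^\infty}^{-1}\|u(t)\|_{L^2}^2$, while Jensen's inequality applied to $t\mapsto t^{p/2}$ — and this is exactly where $p\ge 2$ is used — gives $\int_\O u(t,\cdot)^p\,\upd\mu\ge\mu(\O)^{1-p/2}\|u(t)\|_{L^2}^{p}$, hence $I(t)\ge\|1/c\|_{L^\infty}^{-1}\mu(\O)^{1-p/2}\|u(t)\|_{L^2}^{p}$. Therefore $z(t):=\|u(t)\|_{L^2}^2$ satisfies $z'\le 2\|r\|_{L^\infty}z-2\|1/b\|_{L^\infty}^{-1}\|1/c\|_{L^\infty}^{-1}\mu(\O)^{1-p/2}z^{1+p/2}$, again of the announced form with $\theta=p/2$; conclude as before. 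Note that no term involving $\MB^\star 1$ appears here, because the dissipativity hypothesis removes the need to integrate the linear part by parts.

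\emph{On the difficulty.} There is no deep obstacle: the argument is entirely a differential-inequality one and uses neither compactness, nor the comparison principle, nor any sign of $\MB^\star 1$. The only delicate points are bookkeeping ones — justifying the differentiation of $t\mapsto(1|u(t))$ and $t\mapsto\|u(t)\|_{L^2}^2$ (licensed by strong-solution regularity, already recorded in the excerpt) and the identity $\left(1|\M u(t)\right)=\left(\MB^\star 1|u(t)\right)$ with $\MB^\star 1$ identified with an $L^\infty$ function, which is precisely why $\|\MB^\star 1\|_{L^\infty}$ appears among the data of the first part and is absent from the second.
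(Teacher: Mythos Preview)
Your proof is correct and follows essentially the same approach as the paper: test against $1_E$ (resp.\ $u$), bound the linear part via $\|\MB^\star 1\|_{L^\infty}$ (resp.\ the dissipativity hypothesis), control the nonlocal term from below via H\"{o}lder/Jensen and the positive lower bounds on $b$ and $c$, and reduce to a logistic-type scalar inequality. Your write-up is in fact a bit more explicit than the paper's on why $p\ge 2$ is needed for the $L^2$ estimate, but the argument is otherwise the same.
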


Of course, the assumption $\sup_{v\in H}\left( v|\M v \right)\leq 0$ is satisfied in many applications, for instance if
$-\MB$ has a normal compact resolvent.

The next theorem is concerned with the instability of $0$, or, in other words, with the asymptotic positivity of the solution. In general,
it only holds in a weak sense.

\begin{theorem}[Instability of $0$]\label{thm:instability_zero}
    Assume that $\MB$ admits a principal adjoint eigenfunction, namely a function $\psi\in\operatorname{int}(E^+)$ such that
    $\MB^\star\psi=0$.
    
    Then any solution $u$ of \eqref{general_system} with $u_0\neq 0$ satisfies
    \begin{equation}\label{eq:pointwise_positivity_of_the_L1_norm}
        \int_{\O}u(t,x)\upd\mu(x)>0\quad\text{for all }t\geq 0,
    \end{equation}
    \begin{equation}\label{eq:asymptotic_positivity_of_the_Lp_norm}
        \limsup_{t\to+\infty}\int_{\O}u(t,x)^p\upd\mu(x) >0.
    \end{equation}
\end{theorem}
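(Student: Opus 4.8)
The plan is to pair \eqref{general_system} with the adjoint principal eigenfunction $\psi$, so that the linear part disappears and the problem reduces to a scalar differential inequality. Set $N(t)=(\psi\,|\,u(t))$, the $L^2$ inner product of the (strong) solution $u\in\c^1((0,+\infty),E)\cap\c([0,+\infty),\dom_{E^+}(\MB))$ with $\psi$. Then $N$ is $\c^1$ on $(0,+\infty)$ and continuous on $[0,+\infty)$, and since $u(t)\in\dom_E(\MB)$ one has $(\psi\,|\,\M u(t))=(\MB^\star\psi\,|\,u(t))=0$ by the defining property of the $L^2$ adjoint and the hypothesis $\MB^\star\psi=0$. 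Writing $m(t)=(c\,|\,u(t)^p)=\int_\O u(t,y)^p c(y)\,\upd\mu(y)\geq 0$, differentiating \eqref{general_system} gives
\begin{equation*}
    N'(t)=\int_\O \psi(x)\,u(t,x)\bigl(r(x)-b(x)\,m(t)\bigr)\,\upd\mu(x).
\end{equation*}
Observe that $t\mapsto u(t)$ is continuous into the Banach algebra $E$, so $t\mapsto u(t)^p$ and hence $m$ are continuous on $[0,+\infty)$, and that $N(0)=(\psi\,|\,u_0)>0$, because $u_0\in E^+\setminus\{0\}$ is continuous and $\psi\in\operatorname{int}(E^+)$ has a positive infimum on $\O$.

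For \eqref{eq:pointwise_positivity_of_the_L1_norm}, I would use the crude lower bound $r(x)-b(x)m(t)\geq -\|b\|_{L^\infty}m(t)$, which yields $N'(t)\geq -\|b\|_{L^\infty}m(t)\,N(t)$. Since $m$ is locally integrable, Gr\"onwall's lemma gives $N(t)\geq N(0)\exp\bigl(-\|b\|_{L^\infty}\int_0^t m\bigr)>0$ for every $t\geq 0$. The claim then follows from $0<N(t)\leq\|\psi\|_{L^\infty}\int_\O u(t,x)\,\upd\mu(x)$.

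For \eqref{eq:asymptotic_positivity_of_the_Lp_norm}, I would argue by contradiction. Suppose $\int_\O u(t,x)^p\,\upd\mu(x)\to 0$ as $t\to+\infty$. Then $m(t)\leq\|c\|_{L^\infty}\int_\O u(t,x)^p\,\upd\mu(x)\to 0$, so there is $t_0$ with $\|b\|_{L^\infty}m(t)\leq\tfrac12\inf_\O r$ for all $t\geq t_0$, where $\inf_\O r>0$ because $r\in\operatorname{int}(E^+)$. Hence $r(x)-b(x)m(t)\geq\tfrac12\inf_\O r$ for $t\geq t_0$, so $N'(t)\geq\tfrac12(\inf_\O r)\,N(t)$ and therefore $N(t)\geq N(t_0)\,\upe^{(\inf_\O r)(t-t_0)/2}\to+\infty$, using $N(t_0)>0$ from the first part. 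This contradicts the bound $N(t)\leq\|\psi\|_{L^\infty}\|u(t)\|_{L^1(\O)}$, whose right-hand side is bounded uniformly in $t$ by Theorem \ref{thm:uniform_bound_dynamical_system}.

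The computations are elementary; the two points that genuinely need care are the vanishing of the linear term after pairing with $\psi$ — which is exactly where the existence of an honest adjoint eigenfunction (rather than merely $1_E\in\dom(\MB^\star)$, as used in Theorem \ref{thm:uniform_bound_dynamical_system}) is exploited, and which requires $u(t)\in\dom_E(\MB)$, guaranteed by the strong-solution regularity — and, for the second assertion, the appeal to the a priori $L^1$ bound to preclude the exponential growth of $N$. No compactness of $\MB$ and no comparison principle are invoked, consistently with the abstract framework.
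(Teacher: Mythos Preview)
Your proof is correct and close in spirit to the paper's, but for \eqref{eq:pointwise_positivity_of_the_L1_norm} you take a cleaner route. The paper first invokes the comparison principle for the nonautonomous semigroup generated by $\M+a(t,\cdot)$ to obtain $u(t,x)\geq \upe^{-f(t)}\T(t)u_0(x)$, and only then pairs with $\psi$ to show $(\psi\,|\,\T(t)u_0)=(\psi\,|\,u_0)>0$. You instead pair with $\psi$ from the start, obtain the scalar inequality $N'(t)\geq -\|b\|_{L^\infty}m(t)\,N(t)$, and conclude by Gr\"onwall. This is more elementary and, as you note, avoids the comparison principle entirely; it also makes transparent that only $\MB^\star\psi=0$ is used, with no positivity of the semigroup needed beyond $u(t)\in E^+$.

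For \eqref{eq:asymptotic_positivity_of_the_Lp_norm} the two arguments are essentially the same up to the closing step: the paper derives the contradiction internally, using H\"older to deduce $\int_\O u^p\to+\infty$ from $(\psi\,|\,u)\to+\infty$, whereas you invoke the a priori $L^1$ bound of Theorem~\ref{thm:uniform_bound_dynamical_system}. Both are valid; the paper's version is self-contained, yours is slightly shorter but introduces a (harmless) logical dependence on the earlier theorem.
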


The existence of a principal adjoint eigenfunction should not be understood as a restrictive assumption. In all applications
we have in mind it is satisfied. Moreover, it could be relaxed as the existence of a principal adjoint sub-eigenfunction, namely a function
$\psi\in\operatorname{int}(E^+)$ such that $-\MB^\star\psi\leq 0$.

Knowing that the solution is asymptotically positive in some sense leads naturally to the search for positive equilibria. 
This is the point of the next theorem.

\begin{theorem}[Existence and uniqueness of the positive equilibrium]\label{thm:existence_uniqueness_stationary_state}
    Assume that the linear operator $\left(-\frac{1}{b}\M -\frac{r}{b},\B\right)$ admits an eigenfunction in $E^+\setminus\{0\}$.

   Then the problem \eqref{general_system} admits a unique stationary state $u^\star\in E^+\setminus\{0\}$, 
   and in fact $u^\star\in \operatorname{int}(E^+)$. 
\end{theorem}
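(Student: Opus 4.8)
The plan is to recognize a nonzero nonnegative stationary state of \eqref{general_system} as a principal eigenfunction of the auxiliary linear operator $\mathcal{L}_b:=\left(-\tfrac1b\M-\tfrac rb,\B\right)$ subject to a single scalar self-consistency constraint, and then to exploit the Perron--Frobenius structure that $\mathcal{L}_b$ inherits from $\MB$, together with the normalization $\lambda_1(-\MB)=0$ established above. The first step is the reformulation: a function $u^\star\in E^+\setminus\{0\}$ is a stationary solution of \eqref{general_system} if and only if $\M u^\star+(r-bI)u^\star=0$ in $\O$ and $\B u^\star=0$ on $\Gamma$, where $I:=\int_\O u^\star(y)^p c(y)\,\upd\mu(y)$; note that $I>0$ because $c\in\operatorname{int}(E^+)$ and $u^\star\geq 0$, $u^\star\neq 0$. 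Dividing the interior equation by $b$ -- legitimate since $1/b\in\operatorname{int}(E^+)\subset E$ and $E$ is a Banach algebra -- this reads $\mathcal{L}_b u^\star=-I\,u^\star$ with $\B u^\star=0$, the eigenvalue $-I$ being tied to the eigenfunction by $I=\int_\O u^\star(y)^p c(y)\,\upd\mu(y)$. Everything thus reduces to the analysis of the positive eigenfunctions of $\mathcal{L}_b$.

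The second step exploits the Perron--Frobenius structure of $\mathcal{L}_b$. This operator differs from $\MB$ only by multiplication by the positive bounded element $1/b$ and by a bounded zeroth-order term, and accordingly it still has a strongly positive resolvent: for $C>0$ large, $(\mathcal{L}_b+C,\B)^{-1}$ is well defined, continuous, and maps $E^+\setminus\{0\}$ into $\operatorname{int}(E^+)$ -- this follows from Property \ref{property:strongly_positive_resolvent} by writing the resolvent equation for $\mathcal{L}_b+C$, multiplying through by $b$, and recognizing (for $C$ large) a strongly positive resolvent of a zeroth-order perturbation of $\M$. Granting this, the comparison arguments already used above to identify $\lambda_1(-\MB)$ and to prove that the principal eigenvalue of $\MB$ is simple apply to $\mathcal{L}_b$ and yield: the eigenfunction $\phi_0\in E^+\setminus\{0\}$ provided by the hypothesis lies in fact in $\operatorname{int}(E^+)$, its (necessarily real) eigenvalue $\lambda_0$ is the simple principal eigenvalue of $\mathcal{L}_b$, and every eigenfunction of $\mathcal{L}_b$ in $E^+\setminus\{0\}$ is a positive multiple of $\phi_0$. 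I would then check that $\lambda_0<0$: rewriting $\mathcal{L}_b\phi_0=\lambda_0\phi_0$ as $-\M\phi_0=(r+\lambda_0 b)\phi_0$ in $\O$, if $\lambda_0\geq 0$ then $-\M\phi_0\geq (\inf_{\O} r)\,\phi_0$ in $\O$ with $\B\phi_0=0$ and $\phi_0\in\dom_{E^+}(\MB)\setminus\{0\}$, so by the very definition of $\lambda_1(-\MB)$ we would get $\lambda_1(-\MB)\geq\inf_{\O} r>0$, contradicting $\lambda_1(-\MB)=0$.

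The last step is the conclusion. By the second step, any stationary state $v\in E^+\setminus\{0\}$ must be of the form $v=s\phi_0$ with $s>0$ and must have eigenvalue $\lambda_0$; the self-consistency constraint of the first step then reads $s^p\int_\O\phi_0(y)^p c(y)\,\upd\mu(y)=-\lambda_0$. Since $\phi_0,c\in\operatorname{int}(E^+)$, the integral $\kappa:=\int_\O\phi_0(y)^p c(y)\,\upd\mu(y)$ is positive, and since $\lambda_0<0$ we have $-\lambda_0>0$, so (using $p\geq 1$) this determines $s$ uniquely. Consequently
\begin{equation*}
    u^\star:=\left(\frac{-\lambda_0}{\kappa}\right)^{1/p}\phi_0
\end{equation*}
is the unique stationary state of \eqref{general_system} in $E^+\setminus\{0\}$, and $u^\star\in\operatorname{int}(E^+)$ because $\phi_0$ is and the prefactor is positive.

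The step I expect to be the genuine obstacle is the second one. In this abstract setting no compactness of $\MB$ is assumed and the Krein--Rutman theorem is unavailable, so one must extract, from the strong positivity of $(-\M+C,\B)^{-1}$ alone, both that $\mathcal{L}_b$ retains a strongly positive resolvent and that a positive eigenfunction of an operator of this type necessarily realizes a simple principal eigenvalue and spans its eigenspace. This requires the sliding and comparison arguments of the preceding sections, together with the monotonicity and continuity of $a\mapsto\lambda_1(-\M+a,\B)$, and one must remain careful that for a general potential $a\in E$ no eigenfunction is available a priori.
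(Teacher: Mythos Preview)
Your proposal is correct and follows essentially the same route as the paper: reformulate stationary states as positive eigenfunctions of $\mathcal{L}_b=\left(-\tfrac1b\M-\tfrac rb,\B\right)$, use strong positivity of a resolvent to show any such eigenfunction lies in $\operatorname{int}(E^+)$ and that the associated eigenvalue is simple, check $\lambda_0<0$, then fix the normalization. Two small remarks: the paper avoids building a separate strongly positive resolvent for $\mathcal{L}_b$ and instead applies $(-\M+C,\B)^{-1}$ directly to $rv+Cv+\lambda bv$, which is slightly more economical; and your argument for $\lambda_0<0$ via $\lambda_1(-\MB)\geq\inf_\O r>0$ is cleaner than the paper's version.
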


Of course the assumption of Theorem \ref{thm:existence_uniqueness_stationary_state} is always satisfied if Assumption \ref{ass:reduction_to_ODEs} 
holds true or if, for instance, $\M$ is the Neumann Laplacian.
However, we will illustrate the sharpness of the assumption by providing a counterexample where the following three properties hold concurrently:
\begin{enumerate}
    \item $\lambda_1\left(-\frac{1}{b}\M -\frac{r}{b},\B\right)$ does not admit eigenfunctions in $E$,
    \item \eqref{general_system} does not admit a stationary state in $E^+\setminus\{0\}$,
    \item multiple stationary measures coexist.
\end{enumerate}

When the stationary state exists and is unique, it is natural to investigate its stability. 
As will be illustrated below by means of more counterexamples, in general even the local asymptotic stability is false. 
Hopf bifurcations, in particular, need to be ruled out by some additional structural condition. 
The next theorem brings forth sufficient conditions for the local asymptotic stability.

We recall that $\MB$ has a special form \eqref{definition_M} that make it possible to define its resolvent as a linear operator from $H$ into $H$.
We recall that the operator $\MB$ is referred to as \textit{normal} if $\MB^\star \MB =\MB \MB^\star$. A linear operator is normal if and only
if its resolvent is normal. We also recall that a normal compact operator satisfies a standard spectral decomposition theorem. 
Moreover, if $\MB$ is normal, then $1_E$ is a principal adjoint eigenfunction: the chain of equalities
$0 = \MB^\star 0 = \MB^\star \MB 1_E = \MB\MB^\star 1_E$ implies that $\MB^\star 1_E\in\ker(\MB)$, and consequently
$\MB^\star 1_E\in\R 1_E$.

The next theorems will use the following notations, when the unique positive stationary state $u^\star$ is well-defined:
\begin{equation}
    \widetilde{\M} =\frac{1}{u^\star}\M (u^\star\operatorname{id}_E)-\left(\frac{1}{u^\star}\M  u^\star\right)\operatorname{id}_E,\quad \widetilde{\B}=\frac{1}{u^\star}\B(u^\star\operatorname{id}),
\end{equation}
\begin{equation}
    \widetilde{c}=\frac{(u^\star)^p c }{\int_{\O}(u^\star)^p c \upd\mu },
\end{equation}
\begin{equation}\label{def:spectral_gap}
    \sigma_2 = \inf\left\{ \left(v\middle|-\frac{\widetilde{c}}{b}\widetilde{\M} v\right)\ \middle|\ v\in \dom_E(\MB),\ v\in (1_E)^\perp,\ \|v\|_{L^2}=1\right\}.
\end{equation}
In general $\sigma_2\in\overline{\R}$ (with the convention $\sigma_2=+\infty$ if $\dom_E(\MB)=\R 1_E$). 
This \textit{spectral gap} is related with the spectrum of the self-adjoint part of $-\frac{\widetilde{c}}{b}\widetilde{\M}_{\widetilde{\B}}=\left(-\frac{\widetilde{c}}{b}\widetilde{\M},\widetilde{\B}\right)$,
namely $\frac12\left(-\frac{\widetilde{c}}{b}\widetilde{\M}_{\widetilde{\B}}+\left(-\frac{\widetilde{c}}{b}\widetilde{\M}_{\widetilde{\B}}\right)^\star\right)$. 
If $-\frac{\widetilde{c}}{b}\widetilde{\M}_{\widetilde{\B}}$ is normal and has compact resolvent, a standard
argument using the spectral decomposition and the Krein--Rutman theorem leads to $\sigma_2\in(0,+\infty)$. 
However $\sigma_2\in(0,+\infty)$ can still be true without compactness (\textit{e.g.}, symmetric nonlocal diffusion operators) or
without normalness (\textit{e.g.}, non-normal doubly stochastic matrices).

\begin{theorem}[Local stability of the equilibrium]\label{thm:LAS}
    Assume the operator $(-\frac{1}{b}\M -\frac{r}{b},\B)$ admits an eigenfunction in $E^+\setminus\{0\}$.
    Let $u^\star\in\operatorname{int}(E^+)$ be the unique positive stationary state of \eqref{general_system} given by 
    Theorem \ref{thm:existence_uniqueness_stationary_state}. 

    Then $u^\star$ is locally asymptotically stable if at least one of the following conditions hold true:
    \begin{enumerate}[label=\textup{(LAS.\arabic*)}]
        \item \label{cond:LAS_projection} $\widetilde{c}\in\ker\left(\left(\widetilde{\M}_{\widetilde{\B}}\right)^\star \right)$, and then the convergence to $u^\star$ occurs in the topology of $E$;
        \item \label{cond:LAS_entropy} $1_E\in\ker\left(\left(\frac{\widetilde{c}}{b}\widetilde{\M}_{\widetilde{\B}}\right)^\star\right)$,
            $\sigma_2\in(0,+\infty)$, and then the convergence to $u^\star$ occurs in the topology of $H$;
        \item \label{cond:LAS_diagonalization} $-\widetilde{\M}_{\widetilde{\B}}$ is normal and has compact resolvent on $H$, and at least one of the
            following three supplementary conditions holds true: 
            \begin{enumerate}
                \item $b\in\R 1_E$;
                \item $\widetilde{c}\in\R 1_E$;
                \item the following two conditions are true:
                    \begin{equation}\label{spectrum_condition}\tag{$\sigma$}
                        \operatorname{sp}(-\widetilde{\M}_{\widetilde{\B}})\subset \left\{ \lambda\in\C\ \middle|\ \textup{Re}(\lambda)\geq|\textup{Im}(\lambda)|\right\},
                    \end{equation}
                    \begin{equation}\label{angle_condition}\tag{$\angle$}
                        \left(b|\widetilde{c}\right)+\frac{\|b\|_{L^1}\|\widetilde{c}\|_{L^1}}{\mu(\O)}> \sqrt{\left(\|b\|_{L^2}^2-\frac{\|b\|_{L^1}^2}{\mu(\O)}\right)\left(\|\widetilde{c}\|_{L^2}^2-\frac{\|\widetilde{c}\|_{L^1}^2}{\mu(\O)}\right)}
                    \end{equation}
                and then the convergence to $u^\star$ occurs in the topology of $H$.
       \end{enumerate}
    \end{enumerate}
\end{theorem}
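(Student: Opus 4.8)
\emph{Reduction and linearization.} The plan is to conjugate away the equilibrium and then analyse a rank-one perturbation of a Feller-type generator. Set $\kappa=\int_{\O}(u^\star)^p c\,\upd\mu>0$. By Theorem~\ref{thm:existence_uniqueness_stationary_state}, $u^\star$ is the eigenfunction of $\bigl(-\frac{1}{b}\M-\frac{r}{b},\B\bigr)$ for the eigenvalue $-\kappa$, i.e. $\frac{1}{u^\star}\M u^\star=\kappa b-r$. Since $u^\star\in\operatorname{int}(E^+)$, I would substitute $u=u^\star z$, turning \eqref{general_system} into
\begin{equation*}
    \partial_t z=\widetilde{\M}z+\kappa\,b\,z\bigl(1-(\widetilde{c}\,|\,z^p)\bigr)\ \ \text{in }\O,\qquad \widetilde{\B}z=0\ \ \text{on }\Gamma,
\end{equation*}
the equilibrium becoming $z\equiv 1_E$ (recall $(\widetilde{c}\,|\,1_E)=1$). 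Linearizing at $1_E$ with $z=1_E+w$ (using $\widetilde{\M}1_E=0$ and $(\widetilde{c}\,|\,(1_E+w)^p)=1+p(\widetilde{c}\,|\,w)+o(\|w\|_E)$) gives
\begin{equation*}
    \mathcal{L}w=\widetilde{\M}w-p\kappa\,b\,(\widetilde{c}\,|\,w).
\end{equation*}
One checks, from the structure of $\widetilde{\M}_{\widetilde{\B}}$, that the growth bound of $(\upe^{t\mathcal{L}})_{t\geq0}$ equals its spectral bound; so, except under~\ref{cond:LAS_entropy} where I would argue directly on the nonlinear equation, local asymptotic stability follows from the principle of linearized stability as soon as $\operatorname{sp}(\mathcal{L})\subset\{\textup{Re}<0\}$ is proved.

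\emph{Conditions \ref{cond:LAS_projection} and \ref{cond:LAS_entropy}.} Under~\ref{cond:LAS_projection}, $\widetilde{c}\in\ker(\widetilde{\M}_{\widetilde{\B}}^\star)$ yields $(\widetilde{c}\,|\,\widetilde{\M}v)=0$, so $\ker(\widetilde{c}\,|\,\cdot\,)$ is a closed $\mathcal{L}$-invariant hyperplane on which $\mathcal{L}$ restricts to $\widetilde{\M}_{\widetilde{\B}}$, while the induced action on the quotient line $\R 1_E$ is multiplication by $-p\kappa(\widetilde{c}\,|\,b)<0$. The nonlocal moment $s(t)=(\widetilde{c}\,|\,w(t))$ then solves $\dot s=-p\kappa(\widetilde{c}\,|\,b)\,s$ and decays exponentially, and the conjugated Feller semigroup $\widetilde{\T}(t)$, being a contraction with unique invariant probability measure $\propto\widetilde{c}\,\upd\mu$ (by irreducibility, Assumption~\ref{ass:irreducibility}), satisfies $\widetilde{\T}(t)v\to(\widetilde{c}\,|\,v)\,1_E$; Duhamel's formula then gives $w(t)\to0$ in $E$, and the nonlinear equation is handled by the same scheme with quadratic corrections. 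Under~\ref{cond:LAS_entropy}, $\widetilde{c}/b\in\operatorname{int}(E^+)$ is the adjoint principal eigenfunction ($\widetilde{\M}_{\widetilde{\B}}^\star(\widetilde{c}/b)=0$), and I would use the weighted energy $V(w)=\frac12\int_{\O}\frac{\widetilde{c}}{b}\,|w|^2\,\upd\mu\asymp\|w\|_{L^2}^2$: along the flow the nonlocal cross term collapses to $-p\kappa|(\widetilde{c}\,|\,w)|^2$ and the dispersive term is $-\textup{Re}\bigl(w\,\big|-\frac{\widetilde{c}}{b}\widetilde{\M}_{\widetilde{\B}}w\bigr)$; writing $w=a1_E+w^\perp$ with $w^\perp\perp 1_E$, the relation $\widetilde{\M}_{\widetilde{\B}}^\star(\widetilde{c}/b)=0$ removes the $1_E$–$w^\perp$ coupling, leaving the dispersive term $\leq-\sigma_2\|w^\perp\|_{L^2}^2$ by~\eqref{def:spectral_gap}; since $|a|\lesssim|(\widetilde{c}\,|\,w)|+\|w^\perp\|_{L^2}$, this gives $\dot V\leq-c_0V$, hence exponential decay in $H$.

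\emph{Condition \ref{cond:LAS_diagonalization}.} Here $-\widetilde{\M}_{\widetilde{\B}}$ is normal with compact resolvent, so by the spectral theorem, the Krein--Rutman theorem and Property~\ref{property:strongly_positive_resolvent} (positivity improving plus normal excludes peripheral spectrum other than $0$), $\operatorname{sp}(-\widetilde{\M}_{\widetilde{\B}})=\{0\}\sqcup\{\mu_n\}_{n\geq1}$ with $0$ simple, eigenfunction $1_E$, and $\textup{Re}\mu_n\geq g>0$. Then $\mathcal{L}$ also has compact resolvent, $0\notin\operatorname{sp}(\mathcal{L})$ (because $\ker\widetilde{\M}_{\widetilde{\B}}^\star=\R 1_E$ forces $(\widetilde{c}\,|\,v)=0$, hence $v\in\R 1_E$, hence $v=0$), and $\lambda\in\operatorname{sp}(\mathcal{L})$, $\lambda\notin\operatorname{sp}(\widetilde{\M}_{\widetilde{\B}})$, iff
\begin{equation*}
    F(\lambda):=1+p\kappa\,\bigl(\widetilde{c}\,\big|\,(\lambda-\widetilde{\M}_{\widetilde{\B}})^{-1}b\bigr)=0.
\end{equation*}
I would split $F$ into the $\mu_0=0$ part (whose real part is $\geq0$ whenever $\textup{Re}\lambda\geq0$) and the higher-mode remainder $p\kappa\,\bigl(\widetilde{c}_1\,\big|\,(\lambda-\widetilde{\M}_{\widetilde{\B}})^{-1}b_1\bigr)$, where $b_1,\widetilde{c}_1$ are the components of $b,\widetilde{c}$ in $(1_E)^\perp$. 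If $b\in\R 1_E$ or $\widetilde{c}\in\R 1_E$ the remainder vanishes identically and $F$ has its only zero at $\lambda=-p\kappa\|b\|_{L^1}/\mu(\O)<0$. Otherwise, the cone condition~\eqref{spectrum_condition} confines $\lambda-\widetilde{\M}_{\widetilde{\B}}$ restricted to $(1_E)^\perp$ to a half-plane sector whenever $\textup{Re}\lambda\geq0$, which controls the remainder quantitatively, while the angle condition~\eqref{angle_condition} — equivalently $\frac{2\|b\|_{L^1}\|\widetilde{c}\|_{L^1}}{\mu(\O)}+(b_1\,|\,\widetilde{c}_1)>\|b_1\|_{L^2}\|\widetilde{c}_1\|_{L^2}$ — is exactly what keeps that control from reaching $-1$. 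In all three subcases $\operatorname{sp}(\mathcal{L})\subset\{\textup{Re}<0\}$, hence exponential decay in $H$.

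\emph{Expected main difficulty.} Two points are genuinely delicate: in~\ref{cond:LAS_projection}, the ergodic limit $\widetilde{\T}(t)v\to(\widetilde{c}\,|\,v)\,1_E$ must be obtained \emph{without} a spectral gap or compactness, relying only on irreducibility and the Feller structure on the compact set $\overline{\O}$; and in~\ref{cond:LAS_diagonalization}, the sharp estimate showing that under~\eqref{spectrum_condition}--\eqref{angle_condition} the higher-mode part of $F$ never equals $-1$ on $\{\textup{Re}\lambda\geq0\}$ — the constants in~\eqref{angle_condition} being tuned precisely for this, whereas a coarser Cauchy--Schwarz bound through the resolvent norm would be too weak.
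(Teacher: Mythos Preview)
Your reduction, linearization, and treatment of \ref{cond:LAS_projection} and \ref{cond:LAS_entropy} match the paper's proof essentially line for line: the paper also projects onto $\widetilde{c}$ to get a scalar ODE for $(\widetilde{c}\,|\,h)$ and then invokes semigroup arguments for the remainder, and also uses the weighted $L^2$ energy $\|\sqrt{\widetilde{c}/b}\,h\|_{L^2}^2$ together with the spectral gap $\sigma_2$ for \ref{cond:LAS_entropy} (on the \emph{linearized} equation, not the nonlinear one, but your Lyapunov computation is the same). Your setup for \ref{cond:LAS_diagonalization}---compact resolvent, Krein--Rutman, characteristic equation $F(\lambda)=0$, splitting off the $\mu_0=0$ mode---is also exactly the paper's (it is the Krein/Sherman--Morrison formula).

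The genuine gap is in \ref{cond:LAS_diagonalization}(c), precisely where you flag the difficulty. You correctly observe that a naive Cauchy--Schwarz bound on the higher-mode remainder via $\|(\lambda-\widetilde{\M})^{-1}\|$ is too crude, but you do not supply the replacement. The paper's trick is not to bound $F(\lambda)$ directly but to \emph{rewrite} the equation $F(\lambda)=0$: multiply by $-2\lambda$, then add and subtract $(b\,|\,\widetilde{c})$, to obtain the identity
\[
\frac{-2\lambda}{p\kappa}+(b\,|\,\widetilde{c})+\frac{\|b\|_{L^1}\|\widetilde{c}\|_{L^1}}{\mu(\O)}
=\bigl(\widetilde{c}_1\,\big|\,(-2\widetilde{\M}(\,-\widetilde{\M}-\lambda)^{-1}-\operatorname{id})\,b_1\bigr),
\]
where $b_1,\widetilde{c}_1$ are the components in $(1_E)^\perp$. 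The left-hand side has real part strictly larger than the right-hand side of \eqref{angle_condition} (since $\textup{Re}(-\lambda)\geq 0$), so it suffices to show that the real part of the right-hand side is at most $\|b_1\|_{L^2}\|\widetilde{c}_1\|_{L^2}$. This follows by Cauchy--Schwarz once one checks that the \emph{real part} of the operator $\mathcal{A}_\lambda=-2\widetilde{\M}(-\widetilde{\M}-\lambda)^{-1}-\operatorname{id}$, namely $\mathcal{B}_\lambda=\frac12(\mathcal{A}_\lambda+\overline{\mathcal{A}_\lambda})$, has eigenvalues of modulus $\leq 1$ on $(1_E)^\perp$: an explicit computation gives $\mathcal{B}_\lambda f_k=\frac{\mu_k^2-|\lambda|^2}{\mu_k^2+2\mu_k\textup{Re}(-\lambda)+|\lambda|^2}\,f_k$, and the modulus bound reduces to an elementary inequality in $\textup{Re}\mu_k,\textup{Im}\mu_k,\textup{Re}\lambda,\textup{Im}\lambda$ that holds precisely under the cone condition \eqref{spectrum_condition}. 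Without this algebraic rewriting (which was suggested to the authors by a third party and is the genuinely nonobvious step), the sharp constants in \eqref{angle_condition} cannot be reached.
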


The first two properties follow from relatively standard methods in stability analysis, that will be recalled below. 
The last one, on the contrary, was inspired by an original finite-dimensional lemma on symmetric matrices 
suggested to the first author by P. Autissier \cite{Autissier}. 
Below we will prove an infinite-dimensional generalization of it for normal operators. 
We will also show that it extends, in a quantified way, another more standard diagonalization method.
Furthermore, we will show with examples that each one of the three conditions in Theorem \ref{thm:LAS} contains 
possibilities not covered by the other two.

Through numerical tests with partially random coefficients, we investigated the sharpness of the geometrical condition 
\eqref{angle_condition} when $-\widetilde{\M}_{\widetilde{\B}}$ is not only normal with a spectrum controlled by \eqref{spectrum_condition}, but 
actually self-adjoint. 
We never caught a case where $u^\star$ is unstable. Hence we suggest the following conjecture.

\begin{conj}
    Assume the operator $\left(-\frac{1}{b}\M -\frac{r}{b},\B\right)$ admits an eigenfunction in $E^+\setminus\{0\}$.
    Let $u^\star$ be the unique positive stationary state of \eqref{general_system} given by Theorem \ref{thm:existence_uniqueness_stationary_state}.

    Assume that $-\widetilde{\M}_{\widetilde{\B}}$ is self-adjoint and has compact resolvent on $H$.

    Then $u^\star$ is locally asymptotically stable. 
    The convergence to $u^\star$ occurs in the topology of $H$.
\end{conj}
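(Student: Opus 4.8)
The plan is linearisation, followed by a hand analysis of the resulting transcendental eigenvalue equation. First, pass to the logarithmic variable $v=u/u^\star$: exactly as in the definition of $\widetilde{\M}$, conjugation by $u^\star$ turns \eqref{general_system} into
\begin{equation*}
    \partial_t v=\widetilde{\M}v+\kappa\,v\,b\,(1-(\widetilde c\,|\,v^p)),\qquad\widetilde{\B}v=0,\qquad\kappa:=\int_{\O}(u^\star)^pc\,\upd\mu>0,
\end{equation*}
whose unique positive steady state is the constant $1_E$. Linearising at $1_E$ (write $v=1_E+w$) produces the linearised generator
\begin{equation*}
    \widetilde{\mathcal L}\,w=\widetilde{\M}w-p\kappa\,b\,(\widetilde c\,|\,w),
\end{equation*}
which differs from $\widetilde{\M}$ by a bounded operator of rank one. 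Since $\widetilde{\M}_{\widetilde{\B}}$ is self-adjoint and bounded above on $H$ (indeed $-\widetilde{\M}_{\widetilde{\B}}\geq0$, with $0$ a simple eigenvalue carried by $1_E$, by the argument used for $\MB$), it generates an analytic semigroup; so does $\widetilde{\mathcal L}$, which moreover retains a compact resolvent. By the principle of linearised stability it then suffices to show that $\operatorname{Re}\lambda<0$ for every $\lambda\in\operatorname{sp}(\widetilde{\mathcal L})$, and the convergence to $u^\star$ will automatically take place in $H$.

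The second step is to compute $\operatorname{sp}(\widetilde{\mathcal L})$. Let $0=\mu_1<\mu_2\leq\mu_3\leq\cdots\to+\infty$ be the eigenvalues of $-\widetilde{\M}_{\widetilde{\B}}$, with orthonormal eigenbasis $(\varphi_k)_{k\geq1}$ of $H$, $\varphi_1\in\R1_E$; expand $b=\sum_k\beta_k\varphi_k$ and $\widetilde c=\sum_k\gamma_k\varphi_k$ with $\beta_k,\gamma_k\in\R$. Any eigenfunction $w$ of $\widetilde{\mathcal L}$ with $(\widetilde c\,|\,w)=0$ is an eigenfunction of $\widetilde{\M}$ orthogonal to $\widetilde c$, hence has eigenvalue $-\mu_k$ for some $k\geq2$ — the case $k=1$ is excluded because $\ker\widetilde{\M}=\R1_E$ and $(\widetilde c\,|\,1_E)=1\neq0$ — so its eigenvalue is $<0$. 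Every remaining eigenvalue $\lambda$ solves $1=p\kappa\,(\widetilde c\,|\,(\widetilde{\M}-\lambda)^{-1}b)$, equivalently, with $h(s):=(\widetilde c\,|\,\upe^{s\widetilde{\M}}b)=\sum_k\gamma_k\beta_k\upe^{-\mu_k s}$,
\begin{equation*}
    \int_0^{+\infty}\upe^{-\lambda s}h(s)\,\upd s=-\frac{1}{p\kappa}\qquad(\operatorname{Re}\lambda>0),
\end{equation*}
with the limiting form $\sum_k\gamma_k\beta_k/(\mu_k+\lambda)=-1/(p\kappa)$ valid also on $\operatorname{Re}\lambda=0$, $\lambda\neq0$. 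The decisive structural input is positivity: the conjugated semigroup $\upe^{s\widetilde{\M}}$ is positivity preserving and, by inherited irreducibility, sends $E^+\setminus\{0\}$ into $\operatorname{int}(E^+)$ for $s>0$; since $b,\widetilde c\in\operatorname{int}(E^+)$ we get $h(s)>0$ for all $s\geq0$, with $h(s)\to\|b\|_{L^1}/\mu(\O)>0$ as $s\to+\infty$. Consequently the left-hand side above is positive for real $\lambda\geq0$, so $\widetilde{\mathcal L}$ has no eigenvalue on the nonnegative real axis — the same mechanism that yields uniqueness of $u^\star$.

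It remains — and this is the crux — to rule out eigenvalues $\lambda$ with $\operatorname{Re}\lambda\geq0$ and $\operatorname{Im}\lambda\neq0$, i.e. to exclude Hopf bifurcations. Writing $\lambda=\sigma+i\tau$ and separating real and imaginary parts, one must show that the two relations
\begin{equation*}
    \int_0^{+\infty}\upe^{-\sigma s}\sin(\tau s)\,h(s)\,\upd s=0,\qquad\int_0^{+\infty}\upe^{-\sigma s}\cos(\tau s)\,h(s)\,\upd s<0
\end{equation*}
cannot hold simultaneously for this particular $h$; equivalently, the Nyquist curve $\tau\mapsto p\kappa\,(\widetilde c\,|\,(i\tau-\widetilde{\M})^{-1}b)$, indented to the right at the simple pole $\tau=0$ coming from $\widetilde{\M}1_E=0$, must not encircle $-1$. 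The obstruction is that $h$, although positive, need not be completely monotone: its coefficients $\gamma_k\beta_k$ are products of the Fourier coefficients of two positive functions and may change sign for $k\geq2$, so the classical theory of Laplace transforms of positive or completely monotone functions does not apply; and indeed, when $\widetilde{\M}$ is merely normal the statement is false and the angle condition \eqref{angle_condition} is genuinely needed. Self-adjointness must therefore be used decisively — it couples the real and imaginary parts of the characteristic function through the single real sequence $(\gamma_k\beta_k)$ and the real frequencies $(\mu_k)$, since $\operatorname{Re}(\widetilde c\,|\,(i\tau-\widetilde{\M})^{-1}b)=\sum_k\gamma_k\beta_k\mu_k/(\mu_k^2+\tau^2)$ and $\operatorname{Im}(\widetilde c\,|\,(i\tau-\widetilde{\M})^{-1}b)=-\tau\sum_k\gamma_k\beta_k/(\mu_k^2+\tau^2)$ — but I do not see how to conclude in full generality. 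The three routes I would try are: (i) construct an operator Lyapunov functional $w\mapsto(w|Qw)$, with $Q>0$ diagonal in the basis $(\varphi_k)$ and satisfying $Qb\in\R^+\widetilde c$, solving $Q\widetilde{\mathcal L}+\widetilde{\mathcal L}^\star Q\leq-\varepsilon\,\operatorname{id}$; (ii) carry out an argument-principle count of the zeros of $1+p\kappa(\widetilde c\,|\,(i\tau-\widetilde{\M})^{-1}b)$ along the indented imaginary axis; (iii) prove an infinite-dimensional strengthening of the finite-dimensional lemma of Autissier behind \ref{cond:LAS_diagonalization}, showing that self-adjointness alone replaces \eqref{angle_condition}. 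Each of these currently stalls exactly where the angle condition would enter, which is why the statement is proposed only as a conjecture, in agreement with the numerical evidence reported above.
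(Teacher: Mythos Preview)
The statement is a \emph{conjecture}, not a theorem: the paper offers no proof whatsoever, only numerical evidence and the remark that under Assumption~\ref{ass:reduction_to_ODEs} it reduces to an earlier open conjecture of Bierkens--Ran. Your write-up correctly reflects this. You carry out the natural first steps --- passing to the reduced variable, linearising at $1_E$, identifying the rank-one perturbation structure, writing down the characteristic equation, and ruling out eigenvalues on $[0,+\infty)$ via positivity of the semigroup --- and then state plainly that excluding complex eigenvalues with nonnegative real part is where every known approach stalls. That is exactly the state of affairs the paper reports: the angle condition \eqref{angle_condition} is what the completed-diagonalisation argument genuinely needs in the merely-normal case, and no substitute exploiting self-adjointness alone is currently known. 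Your three suggested routes (operator Lyapunov functional, Nyquist/argument-principle count, sharpening of the Autissier-type lemma) are reasonable directions, but none is carried to completion, and the paper does not carry any of them to completion either. In short, there is nothing to compare: both you and the paper leave this open.
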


When Assumption \ref{ass:reduction_to_ODEs} holds true, this conjecture reduces to an earlier one \cite[Conjecture 2.16]{Bierkens_Ran}. 

If $u^\star$ is constant, then $\M$ and $\widetilde{\M}$ coincide. 
But in general, the two operators differ, and the fact that one of them is normal does not imply that the other is also normal. 
Similarly, the fact that one of them is self-adjoint does not imply that the other is self-adjoint. 

It might come as a surprise that the conjecture is about the self-adjointness of $-\widetilde{\M}_{\widetilde{\B}}$ and not about that of $-\MB$. 
In fact, we will give an explicit counterexample under Assumption \ref{ass:reduction_to_ODEs} where $\M$ is a symmetric matrix but $u^\star$ is unstable. 
This counterexample settles negatively a (somewhat informal) long-standing conjecture on stability in mutation--selection models with symmetric mutations and rank-one competition. 

The possibility of Hopf bifurcations is reminiscent of results previously established by the second author for a system similar to
\eqref{discrete_ode_system} but with a circulant competition matrix \cite{Girardin_2018}. 

Our final result is the following theorem about global stability.

\begin{theorem}[Global asymptotic stability]\label{thm:GAS}
    Assume the operator $\left(-\frac{1}{b}\M -\frac{r}{b},\B\right)$ admits an eigenfunction in $E^+\setminus\{0\}$.
    Let $u^\star$ be the unique positive stationary state of \eqref{general_system} given by Theorem \ref{thm:existence_uniqueness_stationary_state}. 

    Then $u^\star$ is globally asymptotically stable for initial conditions $u_0\in \dom_{E^+}(\MB)\setminus\{0\}$
    if at least one of the following conditions hold true:
    \begin{enumerate}[label=\textup{(GAS.\arabic*)}]
        \item \label{cond:GAS_projection} $p=1$, $b\in \R 1_E$ and $\widetilde{c}\in\ker\left(\left(\widetilde{\M}_{\widetilde{\B}}\right)^\star\right)$, and then the
            convergence to $u^\star$ occurs in the topology of $E$;
        \item \label{cond:GAS_entropy} $p=1$, $1_E\in\ker\left(\left(\frac{\widetilde{c}}{b}\widetilde{\M}_{\widetilde{\B}}\right)^\star\right)$,
            $\sigma_2\in(0,+\infty)$ and
            \begin{equation}\label{entropy_method_condition}\tag{$\mathcal{CS}$}
                \sup_{\substack{h\in E^+\setminus \R 1_E \\ \sigma_2\left(\mu(\O)\|h\|_{L^2}^2-\|h\|_{L^1}^2\right)<(\widetilde{c}|h^2)+(\widetilde{c}|h)^2}} \frac{\sqrt{(\widetilde{c}|h^2)}-(\widetilde{c}|h)}{\sqrt{\mu(\O)\|h\|_{L^2}^2-\|h\|_{L^1}^2}} < \sqrt{\sigma_2},
            \end{equation}
            and then the convergence to $u^\star$ occurs in the topology of $H$;
        \item \label{cond:GAS_diagonalization} $b\in\R 1_E$, $-\widetilde{\M}_{\widetilde{\B}}$ is normal and has compact resolvent on $H$, 
            and either Assumption \ref{ass:reduction_to_ODEs} holds true or $p\leq 2$, and then the convergence to $u^\star$ occurs in the topology of $E$;
        \item \label{cond:GAS_gradient_flow} $p=2$, $-\frac{\widetilde{c}}{b}\widetilde{\M}_{\widetilde{\B}}$ is self-adjoint and has compact resolvent on $H$,
            the parameters $u_0$, $r$, $b$, $c$, and those in $\MB$ have sufficient regularity to ensure that the solutions of \eqref{general_system} 
            are actually in $\c^1((0,+\infty),\dom_E(\MB))$, and then the convergence to $u^\star$ occurs in the topology of $H$.
    \end{enumerate}
\end{theorem}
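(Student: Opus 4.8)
The plan is to treat all four cases through a single multiplicative reduction. Since $u^\star\in\operatorname{int}(E^+)$, the rescaled unknown $h=u/u^\star$ lies in $\c([0,+\infty),\dom_{E^+}(\MB))$, and using that $u^\star$ solves the stationary equation one checks that $h$ solves
\begin{equation*}
    \partial_t h = \widetilde{\M} h + \rho^\star\, b\, h\Bigl(1 - \int_\O h^p\,\widetilde{c}\,\upd\mu\Bigr),\qquad \widetilde{\B}h=0,\qquad h(0)=u_0/u^\star,
\end{equation*}
with $\rho^\star=\int_\O(u^\star)^p c\,\upd\mu>0$, the target equilibrium being $h\equiv 1_E$ because $\widetilde{\M}1_E=0$ and $\int_\O\widetilde{c}\,\upd\mu=1$. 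The operator $\widetilde{\M}_{\widetilde{\B}}$ inherits from $\MB$ the strongly positive resolvent and irreducibility properties, with simple kernel $\R 1_E$, and (since $\widetilde{\M}1_E=0$) it generates a Markov semigroup $\widetilde{\T}$. Throughout, $h(t)\in\operatorname{int}(E^+)$ for $t>0$, the orbit is bounded in $L^\infty((0,+\infty),L^1)$ (and in $L^2$ when Theorem \ref{thm:uniform_bound_dynamical_system} applies), and $h(t)\not\to 0$ by Theorem \ref{thm:instability_zero} (or, directly, because when $\int_\O h^p\widetilde{c}$ is small the zeroth-order term forces growth). It then remains, in each case, to prove $h(t)\to 1_E$ in the stated topology.

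For \ref{cond:GAS_projection} ($p=1$, $b$ constant, $\widetilde{c}\in\ker\widetilde{\M}_{\widetilde{\B}}^\star$): the weighted moment $m(t)=(\widetilde{c}\,|\,h(t))$ satisfies, using $\widetilde{\M}_{\widetilde{\B}}^\star\widetilde{c}=0$ and the constancy of $b$, the scalar logistic equation $\dot m=\rho^\star b\,m(1-m)$, hence $m(t)\to 1$ monotonically and exponentially. Consequently $h$ solves the asymptotically autonomous \emph{linear} problem $\partial_t h=\widetilde{\M}h+g(t)h$ with $g(t)=\rho^\star b(1-m(t))$ integrable at $+\infty$, and the gauge change $h=\upe^{\int_0^t g}w$ reduces this to $\partial_t w=\widetilde{\M}w$, i.e. $w(t)=\widetilde{\T}(t)(u_0/u^\star)$. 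Since the constants $m(t)1_E$ are exact solutions of the same linear problem, a sub- and supersolution squeeze (equivalently, a Hilbert projective-metric contraction for the irreducible Markov semigroup $\widetilde{\T}$) forces $\widetilde{\T}(t)(u_0/u^\star)$ to converge in $E$ to a multiple of $1_E$; conservation of the weighted mass $(\widetilde{c}\,|\,\widetilde{\T}(t)\cdot)$ together with $m(t)\to1$ pins that multiple at $1_E$, giving convergence in $E$.

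For \ref{cond:GAS_entropy} ($p=1$, $1_E\in\ker(\tfrac{\widetilde{c}}{b}\widetilde{\M}_{\widetilde{\B}})^\star$, $\sigma_2\in(0,+\infty)$, \eqref{entropy_method_condition}): the condition $1_E\in\ker(\tfrac{\widetilde{c}}{b}\widetilde{\M}_{\widetilde{\B}})^\star$ means $\widetilde{\M}^\star(\widetilde{c}/b)=0$, so $\widetilde{c}/b$ is the invariant density of $\widetilde{\T}$, and the relative entropy $\mathcal{H}(h)=\int_\O\tfrac{\widetilde{c}}{b}(h-1-\ln h)\,\upd\mu$ is non-increasing along the flow: the linear part decreases by a Jensen/Dirichlet-form argument, and the nonlinear part contributes $-\rho^\star\bigl(\int_\O(h-1)\widetilde{c}\bigr)^2\le 0$ because $p=1$ makes the competition feedback logistic. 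To convert this into actual convergence \emph{without compactness} (so that LaSalle is unavailable), one estimates the entropy dissipation from below by the spectral gap $\sigma_2$ and the competition defect via Cauchy--Schwarz: the inequality \eqref{entropy_method_condition} is exactly the sharp algebraic condition ensuring $\tfrac{\upd}{\upd t}\mathcal{H}(h(t))\le-\delta\|h(t)-1_E\|_{L^2}^2$ along trajectories for some $\delta>0$; integrating in time then yields $h(t)\to1_E$ in $H$.

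For \ref{cond:GAS_diagonalization} ($b$ constant, $-\widetilde{\M}_{\widetilde{\B}}$ normal with compact resolvent, and Assumption \ref{ass:reduction_to_ODEs} or $p\le2$) one splits $h=\bar h(t)1_E+h^\perp(t)$ along the orthogonal spectral projection onto $\R 1_E$: because $b$ is constant, $\bar h$ solves a scalar ODE while $h^\perp$ solves a linear equation whose semigroup decays like $\upe^{-\sigma_2 t}$, and the gauge trick of \ref{cond:GAS_projection} together with the $L^2$-control of $\int_\O h^p\widetilde{c}$ (available when $p\le2$ by Theorem \ref{thm:uniform_bound_dynamical_system}, and trivial in the finite-dimensional case of Assumption \ref{ass:reduction_to_ODEs}) closes the bootstrap; convergence holds first in $H$, then in $E$ by parabolic smoothing, and \ref{cond:LAS_diagonalization} identifies the limit as $1_E$. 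For \ref{cond:GAS_gradient_flow} ($p=2$, $-\tfrac{\widetilde{c}}{b}\widetilde{\M}_{\widetilde{\B}}$ self-adjoint with compact resolvent, plus regularity) one exhibits a gradient-flow structure: in $L^2\bigl(\O,\tfrac{\widetilde{c}}{b}\upd\mu\bigr)$ the functional $\mathcal{F}(h)=\tfrac12\bigl(h\,\big|\,-\tfrac{\widetilde{c}}{b}\widetilde{\M}h\bigr)+\tfrac{\rho^\star}{4}\bigl(1-\int_\O h^2\widetilde{c}\,\upd\mu\bigr)^2$ satisfies $\tfrac{\upd}{\upd t}\mathcal{F}(h(t))=-\|\partial_t h(t)\|^2$ in that weighted norm, is nonnegative and coercive thanks to $\sigma_2>0$ and the quartic term, so the regularity hypothesis makes the orbit precompact in $H$ and LaSalle's invariance principle applies; the $\omega$-limit set consists of equilibria of the $h$-equation, which by the uniqueness part of Theorem \ref{thm:existence_uniqueness_stationary_state} (together with the exclusion of $0$) is $\{1_E\}$, whence $h(t)\to1_E$ in $H$. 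The routine parts are the reduction, the logistic moment identity in \ref{cond:GAS_projection}, and the LaSalle machinery in \ref{cond:GAS_gradient_flow}. The main obstacles are the two instances where compactness is \emph{not} available: upgrading the sub/supersolution sandwich in \ref{cond:GAS_projection} into genuine convergence of $\widetilde{\T}(t)$ to the rank-one projection (the Gurtin--MacCamy example shows the ergodic rate cannot be uniform), and, above all, carrying out the entropy computation in \ref{cond:GAS_entropy} with optimal constants so that \eqref{entropy_method_condition} emerges as exactly the threshold at which the spectral gap beats the rank-one nonlocal nonlinearity.
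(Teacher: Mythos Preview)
Your multiplicative reduction and your treatment of \ref{cond:GAS_projection}, \ref{cond:GAS_diagonalization}, \ref{cond:GAS_gradient_flow} are essentially the paper's. The moment equation in \ref{cond:GAS_projection}, the ratio decay $v_k/v_1$ in \ref{cond:GAS_diagonalization}, and the gradient-flow functional in \ref{cond:GAS_gradient_flow} are all there (the paper's functional is $F(w)=\tfrac12(w\,|\,-\tfrac{c}{b}\M w)+\tfrac12\|\sqrt{c}w\|_{L^2}^2-\tfrac14\|\sqrt{c}w\|_{L^2}^4$, increasing along the flow, which differs from your $\mathcal{F}$ only by sign and an additive constant). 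Your caution about the ergodic step in \ref{cond:GAS_projection} is well placed; the paper omits those details too.

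The genuine divergence is \ref{cond:GAS_entropy}. You choose the Boltzmann-type entropy $\int\tfrac{\widetilde c}{b}(h-1-\ln h)$; the paper does \emph{not}. The paper works with the \emph{quadratic} Lyapunov functional $\bigl\|\sqrt{c/b}\,(v-1)\bigr\|_{L^2}^2$: writing $h=v-1$, one has $-\tfrac12\partial_t\|\sqrt{c/b}\,h\|_{L^2}^2=\mathcal{E}(h)$ with
\[
\mathcal{E}(h)=\Bigl(h\ \Big|\ -\tfrac{c}{b}\M h\Bigr)+(c|h)^2+(c|h)(c|h^2),
\]
and the whole argument is a careful lower bound on $\mathcal{E}$ obtained by parametrising $h=-1+\tau z$ with $\|z\|_{L^2}=\sqrt{\mu(\O)}$ and analysing the resulting second-order polynomial in $\tau$. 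The condition \eqref{entropy_method_condition} appears precisely as the discriminant condition ensuring that this polynomial stays bounded below by a positive multiple of $\mu(\O)^2-\|z\|_{L^1}^2$; the two Cauchy--Schwarz defects $\sqrt{(c|z^2)}-(c|z)$ and $\sqrt{\mu(\O)^2-\|z\|_{L^1}^2}$ are exactly the numerator and denominator that fall out of factoring $4(c|z^2)(c|z)^2-G(z)^2$.

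Your log-entropy route does give a nonincreasing functional (because $\tfrac{c}{b}\M$ is line-sum-symmetric under the hypothesis), but its dissipation is $(\tfrac{1}{v}\,|\,\tfrac{c}{b}\M v)+((c|v)-1)^2$, a Fisher-information term plus a squared moment defect. There is no reason for the first term to be controlled from below by $\sigma_2$ times an $L^2$ distance, since $\sigma_2$ is a \emph{quadratic} spectral gap, not a log-Sobolev constant; and the algebra producing \eqref{entropy_method_condition} simply does not occur in this computation. So the assertion that \eqref{entropy_method_condition} ``is exactly the sharp algebraic condition'' for your dissipation inequality is unsupported. To recover the paper's statement you need to switch to the $L^2$ entropy and carry out the polynomial-in-$\tau$ analysis; your sketch, as written, does not close \ref{cond:GAS_entropy}.
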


Each one of the four conditions in Theorem \ref{thm:GAS} contains possibilities not covered by the other three.

We recall from the state of the art above that $u^\star$ is globally asymptotically stable if Assumption \ref{ass:reduction_to_ODEs} 
holds true and either $N'=1$ or $(N',p)=(2,1)$.

We point out that, on one hand, $p$ plays no role as far as local asymptotic stability is concerned (Theorem 
\ref{thm:LAS}), but on the other hand, it plays a quite significant role for global asymptotic stability, especially in 
infinite dimensions (Theorem \ref{thm:GAS}).

One of the most interesting subsisting problems is: does there exist a case where $u^\star$ exists, is unique, is locally asymptotically stable, 
but is not globally asymptotically stable? 
Numerical simulations, not shown here, support the conjecture that a succession of a supercritical Hopf bifurcation and a subcritical Hopf 
bifurcation can occur. This results in a bistable regime, where both the unique equilibrium and the limit cycle produced by the supercritical 
Hopf bifurcation are locally asymptotically stable, with limited basins of attraction. 
The rigorous proof of this bistability regime, stated in the following conjecture, will be the object of a future work.

\begin{conj}
    Assume that $N'=3$ and Assumption \ref{ass:reduction_to_ODEs} holds true. 

    Then there exists a choice of parameters such that $u^\star$ is locally asymptotically stable but not globally asymptotically stable.
    The $\omega$-limit set also contains a locally asymptotically stable limit cycle. 

    In addition, $\MB$ can be chosen self-adjoint.
\end{conj}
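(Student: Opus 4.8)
The plan is to realize the conjectured bistability as the universal unfolding of a \emph{Bautin bifurcation} (generalized Hopf bifurcation, codimension two) of the reduced system \eqref{discrete_ode_system} with $N'=3$ and $\mathbf{M}=\mathbf{M}^{\textup{T}}$. Recall that near a Bautin point — a parameter value at which the Jacobian at $u^\star$ has a simple pair of purely imaginary eigenvalues $\pm i\omega_0$, a simple negative third eigenvalue, the first Lyapunov coefficient $\ell_1$ vanishes, and the second Lyapunov coefficient $\ell_2$ is nonzero — the phase portrait contains, in a wedge-shaped region of parameter space bounded by a subcritical Hopf line and a fold-of-cycles line, exactly two limit cycles surrounding $u^\star$: an inner \emph{unstable} one and an outer \emph{stable} one, born together at the fold of cycles. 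If $\ell_2<0$ this is precisely the configuration with $u^\star$ linearly stable, so in that wedge $u^\star$ is locally asymptotically stable but not globally asymptotically stable, and the stable outer cycle is a locally asymptotically stable periodic orbit that is the $\omega$-limit set of every initial condition in its (open, $u^\star$-avoiding) basin. Thus it suffices to exhibit one Bautin point with $\ell_2<0$ lying in the admissible parameter set — $\mathbf{M}$ symmetric, essentially nonnegative, irreducible, with $\mathbf{M}\mathbf{1}=\mathbf{0}$, and $\vect r,\vect b,\vect c\in(0,+\infty)^3$, $p\geq 1$ — and to check that the two unfolding directions can be chosen tangent to this set.

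Concretely I would proceed as follows. First, parametrize: write $\mathbf{M}$ through its three nonnegative off-diagonal entries $m_{12},m_{13},m_{23}$ (the diagonal being fixed by $\mathbf{M}\mathbf{1}=\mathbf{0}$), keep $\vect r,\vect b,\vect c,p$ free, and quotient by the scaling invariances ($u\mapsto\lambda u$ with $\vect c\mapsto\lambda^{-p}\vect c$, and overall time rescaling). Second, since under Assumption \ref{ass:reduction_to_ODEs} the hypothesis of Theorem \ref{thm:existence_uniqueness_stationary_state} is automatic, solve $(\mathbf{M}+\diag(\vect r)-\rho^\star\diag(\vect b))u^\star=\mathbf{0}$ for $u^\star$ as a function of the competition level $\rho^\star=\sum_i c_i(u^\star_i)^p$ and close the loop to get $u^\star\in\operatorname{int}(E^+)$ explicitly. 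Third, compute the Jacobian $\mathbf{J}^\star$ at $u^\star$ — a rank-one perturbation of $\mathbf{M}+\diag(\vect r)-\rho^\star\diag(\vect b)$ — and impose, via the Routh--Hurwitz relation $a_1a_2=a_3$ on its characteristic polynomial $\lambda^3+a_1\lambda^2+a_2\lambda+a_3$, that $\operatorname{sp}(\mathbf{J}^\star)=\{-a_1,\pm i\sqrt{a_2}\}$ with $a_1>0$. Fourth, reduce to the two-dimensional center manifold, put the planar flow in Poincaré normal form, and compute $\ell_1$ by the classical third-order formula; the system $\{a_1a_2=a_3,\ \ell_1=0\}$ cuts out the Bautin locus. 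Fifth, evaluate $\ell_2$ (the fifth-order normal-form coefficient) along this locus, select a point where $\ell_2<0$, and verify the Bautin nondegeneracy and transversality conditions so that the standard unfolding theorem for the Bautin bifurcation (Kuznetsov) applies; one then reads off a parameter value in the bistable wedge with all positivity and structural constraints checked by inspection. Since $\vect r,\vect b,\vect c$ will necessarily be nonconstant, none of the global-stability conditions \ref{cond:GAS_projection}--\ref{cond:GAS_gradient_flow} can hold at such a point — in particular, even if one takes $p=2$, the operator $-\tfrac{\widetilde c}{b}\widetilde\M$ is not self-adjoint because $u^\star$ is not constant — which is consistent with the conclusion.

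The main obstacle is the \emph{constrained} search in Steps 4--5: the symmetry $\mathbf{M}=\mathbf{M}^{\textup{T}}$, the rank-one structure of the competition term, and the positivity of $\vect r,\vect b,\vect c$ together carve out a semialgebraic subset of parameter space, and one must certify that the two scalar equations $a_1a_2=a_3$ and $\ell_1=0$ can be satisfied \emph{simultaneously} inside it with $\ell_2\neq0$ and with transversal eigenvalue crossing. The Lyapunov-coefficient expressions are large rational functions of the parameters, so I expect this step to be done with computer algebra to locate a candidate Bautin point and then with interval arithmetic to turn the local existence statement (implicit function theorem plus the sign of $\ell_2$) into a rigorous proof; a convenient simplification is to impose two equal components in $\vect b$, which shrinks the search while keeping $b\notin\R 1_E$, and to fix $p$ (e.g. $p=1$) only once a Bautin point has been found. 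A secondary, softer point is transferring the planar bistability to $\R^3$: the center manifold is two-dimensional and exponentially attracting because the third eigenvalue $-a_1$ is negative, so $u^\star$ is stable in $\R^3$, and the outer cycle produced by the fold of cycles, being hyperbolic (one trivial Floquet multiplier and two inside the unit disk, from the in-manifold dynamics and from the transverse contraction), is normally stable in $\R^3$ and hence genuinely locally asymptotically stable; this is the standard reduction but should be recorded explicitly. Finally, the same mechanism, pushed to the full PDE/nonlocal setting, suggests that the first displayed conjecture in this subsection would follow from the \emph{same} Bautin point (the reduced ODE persisting as a finite-dimensional invariant manifold), but in the $N'=3$ ODE case the argument above is self-contained.
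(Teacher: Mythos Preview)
The paper does not prove this statement: it is explicitly labeled a \emph{conjecture}, supported only by unreported numerical simulations, and the text immediately preceding it reads ``The rigorous proof of this bistability regime, stated in the following conjecture, will be the object of a future work.'' There is therefore no proof in the paper to compare your proposal against.

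That said, your Bautin-bifurcation strategy is consistent with, and sharper than, the mechanism the paper sketches in words (``a succession of a supercritical Hopf bifurcation and a subcritical Hopf bifurcation''): the Bautin point is precisely the codimension-two organizing center where a supercritical and a subcritical Hopf curve meet, and its universal unfolding contains the bistable wedge bounded by the subcritical Hopf line and the fold-of-cycles line. As a proof \emph{plan} this is the natural one. But, as you yourself concede, the decisive step --- locating an admissible Bautin point with $\ell_2<0$ inside the constrained parameter set (symmetric irreducible $\mathbf{M}$ with $\mathbf{M}\mathbf{1}=\mathbf{0}$, positive $\vect r,\vect b,\vect c$) and verifying the nondegeneracy and transversality hypotheses --- is not carried out; you defer it to computer algebra and interval arithmetic. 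So what you have written is a coherent programme, not a proof, and the conjecture remains open exactly as the paper leaves it.
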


\subsection{Organization of the paper}

The rest of this paper is organized to prove the above theorems.

\section{Global boundedness of the total population}

In this section, we derive \textit{a priori} estimates on the solutions of \eqref{general_system}.

\begin{proof}[Proof of Theorem \ref{thm:uniform_bound_dynamical_system}]
    The existence and uniqueness of the solution of the Cauchy problem \eqref{general_system}, being given an initial condition
   $u_0\in \dom_{E^+}(\MB)$, as well as an exponential bound on its growth in time, are standard, as explained in the introduction.

   First we prove the $L^\infty(L^1)$ estimate. Define $U:t\mapsto \|u(t)\|_{L^1(\O)}$ and note that 
   $U(t)=\int_{\O} u(t,y)\upd\mu(y)=\left(1|u(t,\cdot)\right)=\langle 1,u(t,\cdot)\rangle_E$ for any $t\geq 0$.
   
   By the H\"{o}lder inequality,
    \begin{equation*}
        U^{p}(t)\le \mu(\O)^{p-1} \left( 1| u^p(t)\right),
    \end{equation*} 
    and by uniform positivity of $c$,
    \begin{equation*}
        U^{p}(t)\leq \|1/ c \|_{L^\infty}\mu(\O)^{p-1} \left( c \middle| u^p(t)\right).
    \end{equation*} 
    Moreover, since $u(t,x)\in \c^1((0,+\infty),E)$, 
   \begin{equation*}
        \left( 1|\partial_t u(t,\cdot)\right)=\frac{\upd}{\upd t}\left( 1|u(t,\cdot)\right)=U'(t).
    \end{equation*} 
    
   Therefore, using $r,b,c\in E\subset\BUC(\overline{\O})$,
   \begin{equation*}
   \begin{split}
       U'(t) & = \left( 1| \M  u(t)\right) + \left(r|u(t) \right) - \left( b|u(t)\right) \left( c|u^p(t) \right), \\
        & \leq \left( \MB^\star 1|u(t)\right)+\|r\|_{L^\infty}U(t)-\inf_{x\in\O}b(x)U(t)\left(  c \middle| u^p(t,\cdot) \right), \\
        & \leq |\left( \MB^\star 1|u(t)\right)|+\|r\|_{L^\infty}U(t)-\frac{1}{\mu(\O)^{p-1}\|1/ c \|_{L^\infty}\|1/b\|_{L^\infty}}U^{p+1}(t), \\
        & \leq \left(|\MB^\star 1|||u(t)|\right)+\|r\|_{L^\infty}U(t)-\frac{1}{\mu(\O)^{p-1}\|1/ c \|_{L^\infty}\|1/b\|_{L^\infty}}U^{p+1}(t), \\
        & \leq \|\MB^\star 1\|_{L^\infty}\left( 1||u(t)|\right)+\|r\|_{L^\infty}U(t)-\frac{1}{\mu(\O)^{p-1}\|1/ c \|_{L^\infty}\|1/b\|_{L^\infty}}U^{p+1}(t), \\
        & \leq \left(\|\MB^\star 1\|_{L^\infty}+\|r\|_{L^\infty}\right)U(t)-\frac{1}{\mu(\O)^{p-1}\|1/ c \|_{L^\infty}\|1/b\|_{L^\infty}}U^{p+1}(t).
   \end{split}
   \end{equation*}

    Since $U\geq 0$, the boundedness of $U$ follows straightforwardly from the logistic-type ordinary differential equation. More precisely, for all $t\geq 0$, 
    \begin{equation*}
        \|u(t)\|_{L^1(\O)}\leq \max\left\{ \|u_0\|_{L^1}, \left(\mu(\O)^{p-1}\|1/ c \|_{L^\infty}\|1/b\|_{L^\infty}\left(\|\MB^\star 1\|_{L^\infty}+\|r\|_{L^\infty}\right)\right)^{\frac{1}{p}} \right\},
    \end{equation*} 
    or, if $u\in E^+$ is a stationary solution,
    \begin{equation*}
        \|u\|_{L^1} \leq \left(\mu(\O)^{p-1}\|1/ c \|_{L^\infty}\|1/b\|_{L^\infty}\left(\|\MB^\star 1\|_{L^\infty}+\|r\|_{L^\infty}\right)\right)^{\frac{1}{p}}.
    \end{equation*}
    This ends the proof of the $L^\infty(L^1)$ estimate.
    
    Next, if $\sup_{v\in H}\left(v|\M v\right)\leq 0$ and $p\geq 2$, the $L^\infty(L^2)$ estimate follows from the same arguments, simply replacing
    the test function $1_E$ by $2 u(t)$. 
\end{proof}

\begin{remark}
    If Assumption \ref{ass:reduction_to_ODEs} holds true, then it is possible to use the equivalence of norms in $E=\R^{N'}$
    (more precisely, $\|\cdot\|_\infty\leq \|\cdot\|_{1}$) to deduce a global pointwise bound. Otherwise, the norms
    are not equivalent, and as a matter of fact, we will prove that some stationary solutions are bounded only in the sense of measures.
    Still, in applications where $\M$ is local and uniformly elliptic in the continuous variable, for instance
    \begin{equation*}
        \M=
        \begin{cases}
            P+\mathbf{M} & \text{in }\O, \\
            \partial_{\nu} & \text{on }\Gamma,
        \end{cases}
    \end{equation*}
    Schauder estimates and/or the Harnack inequality make it possible to deduce a bound in $\BUC([0,\infty)\times\overline{\O})$ from the $L^\infty(L^1)$ bound.
\end{remark}

\section{Instability of $0$}

In this section, we prove that, at least in a weak sense, the solution cannot stay close to $0$.

\begin{proof}[Proof of Theorem \ref{thm:instability_zero}]
    By comparisons similar to that of the proof of Theorem \ref{thm:uniform_bound_dynamical_system}, we obtain:
    \begin{equation*}
        \partial_t u \geq \M u + u\left(\inf_{\O}(r) -\sup_{\O}(b)\sup_{\O}(c)\int_{\O} u^p\upd\mu\right).
    \end{equation*}

    We begin with the proof of \eqref{eq:pointwise_positivity_of_the_L1_norm}.
    Let
    \begin{equation*}
        f:t\mapsto-\int_0^t\left(\inf_{\O}(r) -\sup_{\O}(b)\sup_{\O}(c)\int_{\O} u(t')^p\upd\mu\right)\upd t'.
    \end{equation*}
    This function is in general not bounded uniformly in time, but it is well defined pointwise, by virtue of semigroup arguments detailed in 
    the introduction. The function $v:(t,x)\mapsto \upe^{f(t)}u(t,x)$ satisfies:
    \begin{equation*}
        \begin{dcases}
            \partial_t v \geq \M v & \text{in }(0,+\infty)\times\O, \\
            \B v  = 0 & \text{on }(0,+\infty)\times \Gamma, \\
            v(0) = u_0 & \text{in }\overline{\O}.
        \end{dcases}
    \end{equation*}
    Thus, by virtue of the comparison principle, $u(t,x)\geq \upe^{-f(t)}\T(t)u_0(x)$. It remains to prove the positivity of $w(t)=\T(t)u_0$ for all $t\geq 0$.
    Taking the scalar product between the equation satisfied by $w$ and the principal adjoint eigenfunction $\psi$ of $\M$, we deduce that 
    $\left(\psi|\partial_t w\right)=\frac{\upd}{\upd t}\left(\psi|w\right)=0$. Consequently, the function $t\mapsto \left(\psi|w\right)$
    is constant, and identically equal to $\left(\psi|u_0\right)$. This constant is positive due to $\psi\in\operatorname{int}(E^+)$ and 
    $u_0\in E^+\setminus\{0\}$. This ends the proof of \eqref{eq:pointwise_positivity_of_the_L1_norm}.

    Next, to prove \eqref{eq:asymptotic_positivity_of_the_Lp_norm}, we argue by contradiction. Assume 
    $\int_{\O}u(t)^p\upd\mu\to 0$ as $t\to+\infty$. Then there exists $t_0\geq 0$ such that 
    \begin{equation*}
        \inf_{t\geq t_0}\left(\inf_{\O}(r) -\sup_{\O}(b)\sup_{\O}(c)\int_{\O} u^p\upd\mu\right)\geq\frac{\inf_{\O} r}{2}>0.
    \end{equation*}
    Then, by arguments similar to those used earlier in the proof, 
    \begin{equation*}
        \left(\psi|u(t)\right)\geq \upe^{\frac{\inf_{\O}r}{2}(t-t_0)}\left(\psi|u(t_0)\right)\quad\text{for all }t\geq t_0.
    \end{equation*}
    But now by virtue of \eqref{eq:pointwise_positivity_of_the_L1_norm} the right-hand side above goes to $+\infty$ as $t\to+\infty$. 
    Thus so does $\left(\psi|u(t)\right)$, and then so does $\int_{\O}u(t)^p\upd\mu$ by virtue of the H\"{o}lder inequality.
    This is a contradiction. This ends the proof of \eqref{eq:asymptotic_positivity_of_the_Lp_norm} and of Theorem \ref{thm:instability_zero}.
\end{proof}

\section{Existence and uniqueness of the stationary solution}

A stationary solution of \eqref{general_system} is a positive solution $u\in E^+\setminus\{0\}$ to the following system:
\begin{equation}\label{eq:sta}
    \begin{dcases}
        -\M  u(x)= u(x)\left(r(x)-b(x)\int_{\O}u(y)^p  c (y)\upd\mu(y)\right) & \text{for any }x\in\O, \\
        \B u(x) = 0 & \text{for any }x\in \Gamma.
    \end{dcases}
\end{equation}

\subsection{Existence and uniqueness}

\begin{proof}[Proof of Theorem \ref{thm:existence_uniqueness_stationary_state}]
Since $b\in\operatorname{int}(E^+)$ and $E$ is a Banach algebra, $1/b\in \operatorname{int}(E^+)$, and then we can divide \eqref{eq:sta} by $b$ and obtain the following equivalent system:
\begin{equation}\label{eq:sta_equivalent}
    \begin{dcases}
        -\frac{1}{b(x)}\M  u(x)= u(x)\left(\frac{r(x)}{b(x)}-\int_{\O}u(y)^p  c (y)\upd\mu(y)\right) & \text{for any }x\in\O, \\
        \B u(x) = 0 & \text{for any }x\in \Gamma.
    \end{dcases}
\end{equation}

It appears clearly that $u\in E^+\setminus\{0\}$ is a solution of this system if and only if 
it is an eigenfunction of $\left(-\frac{1}{b}\M -\frac{r}{b},\B\right):\dom_E(\MB)\to E$ for the eigenvalue $\lambda=-\int_{\O}u^p c$. By assumption of the
theorem, such an eigenfunction exists indeed.

Let $v\in E^+\setminus\{0\}$ be an eigenfuction of $\left(-\frac{1}{b}\M -\frac{r}{b},\B\right)$ associated to an eigenvalue 
$\lambda\in\R$. It follows that $rv+\lambda bv\in E$ is in the domain of $(-\M +C,\B)^{-1}$, 
where $C>0$ is given by Property \ref{property:strongly_positive_resolvent}, with
\begin{equation*}
    v=(-\M +C,\B)^{-1}(rv+Cv+\lambda bv) \quad\text{in }\O.
\end{equation*}
Up to increasing $C$, by strong positivity of the resolvent $(-\M +C,\B)^{-1}$, $v\in\operatorname{int}(E^+)$. 
Hence any eigenfunction in $E^+\setminus\{0\}$ is actually in $\operatorname{int}(E^+)$.

Let $v^1$, $v^2$ be two eigenfunctions of $\left(-\frac{1}{b}\M -\frac{r}{b},\B\right)$ in $\operatorname{int}(E^+)$, and let $\lambda^1$, $\lambda^2$
be the associated eigenvalues. Up to changing notations, assume $\lambda^1\leq \lambda^2$. Since $v^2\in\operatorname{int}(E^+)$, there
exists $\kappa>0$ such that $v^2-\kappa v^1\in\operatorname{int}(E^+)$. In particular $\frac{1}{\kappa}v^2\geq v^1\geq 0$. 
From $v^1\neq 0$, we deduce that the supremum $\kappa^\star$ of such $\kappa>0$ is necessarily finite.
By construction, $v^2-\kappa^\star v^1\in\partial E^+$. Moreover, $\B (v^2-\kappa^\star v^1)=0$ on $ \Gamma$ and, in $\O$,
\begin{equation*}
    \left(-\frac{1}{b}\M -\frac{r}{b}\right)(v^2-\kappa^\star v^1)=\lambda^2 v^2-\kappa^\star \lambda^1 v^1\geq\lambda^1 v^2-\kappa^\star \lambda^1 v^1=\lambda^1(v^2-\kappa^\star v^1).
\end{equation*}
Repeating the same strong positivity argument as before, we deduce that either $v^2-\kappa^\star v^1=0$ or $v^2-\kappa^\star v^1\in\operatorname{int}(E^+)$. Since the latter is impossible, we deduce $v^2=\kappa^\star v^1$. It follows subsequently from the calculation
that $\lambda^2=\lambda^1$. 

In other words, we have recovered a standard Krein--Rutman property: the eigenvalue associated with nonnegative
eigenfunctions is unique, simple and actually associated with a positive eigenfunction.

Now we verify that the principal eigenvalue $\lambda=\lambda_1\left(-\frac{1}{b}\M-\frac{r}{b},\B\right)$ is negative.
By contradiction, assume $\lambda\geq 0$. Let $v$ be an associated principal eigenfunction.
Using $-\M(C1_E-v) = rv+\lambda bv\geq 0$, we deduce by another comparison argument that $v$ is proportional to $1_E$.
But then by definition of $v$ it satisfies $rv+\lambda bv=0$, which contradicts $\inf_{\O}(r+\lambda b)>0$.
Thus $\lambda<0$.

Let $v\in\operatorname{int}(E^+)$ be the unique principal eigenfunction satisfying the normalization $\|v\|_E=1$. 
Any other principal eigenfunction has the form $Av$ with $A>0$. Since the equation $\lambda=-\int_{\O}(Av)^p c\upd\mu $
admits exactly one solution, which is of course
\begin{equation*}
    A=\left( \frac{-\lambda}{\int_{\O}v^p c\upd\mu } \right)^{1/p},
\end{equation*}
the uniqueness of the solution of \eqref{eq:sta_equivalent} follows.
\end{proof}

From the above proof it appears clearly that, conversely, the existence of a stationary solution
in $E^+\setminus\{0\}$ implies the existence of a principal eigenfunction of 
$-\frac{1}{b}\M -\frac{r}{b}$ in $E^+\setminus\{0\}$.

\subsection{An example of nonexistence}
When no such eigenfunction exists, the situation is more elusive. Let us present a counterexample
where a continuum of stationary measures exists.

\begin{proof}
We consider the following nonlocal diffusion equation:
\begin{equation}\label{eq:counter_example_continuum_measures}
    \partial_t u(t,x) = D\int_{\O}(u(t,y)-u(t,x))\upd y+u(t,x)\left(r(x)-\int_{\O}u(t,y)\upd y\right),
\end{equation}
set in $\O=\O_1\cup\O_2\cup\O_3\subset\R^4$, with
\begin{equation*}
    \O_1 = B(0,1)\cap\left\{ x_1<0 \right\},
\end{equation*}
\begin{equation*}
    \O_2 = [0,1]\times B_{\R^3}(0,1),
\end{equation*}
\begin{equation*}
    \O_3 = B((1,0,0,0),1)\cap\left\{ x>1 \right\},
\end{equation*}
with $D>0$ and
\begin{equation*}
    r:x\in\O\mapsto
    \begin{cases}
        2-|x|^2 & \text{if }x_1<0, \\
        2-x_2^2-x_3^2-x_4^2 & \text{if }x_1\in[0,1], \\
        2-|x-(1,0,0,0)|^2 & \text{if }x_1>1.
    \end{cases}
\end{equation*}
It is set in the Banach algebra $E=\textup{Lip}(\overline{\O},\R)$.

In order to characterize the set of possible solutions, let $u$ be a positive bounded Radon measure on $\O$, 
let $M=\int_{\O}\upd u=\langle u,1\rangle_E$, and assume that $u$ satisfies \eqref{eq:counter_example_continuum_measures} in the sense of measures:
\begin{equation}\label{eq:counter_example_continuum_measures_weak_formulation}
    \forall\varphi\in\c_b(\O)\quad DM\langle 1,\varphi\rangle_E-D|\O|\langle u,\varphi\rangle_E+\langle u,r\varphi\rangle_E-M\langle u,\varphi\rangle_E = 0.
\end{equation}

Since $1\leq r\leq 2$ in $\O$, we deduce by testing against $1\in\c_b(\O)$ that
\begin{equation*}
    DM|\O|-D|\O|M+M-M^2\leq 0\leq DM|\O|-D|\O|M+2M-M^2,
\end{equation*}
whence $1\leq M\leq 2$.

Let $A_D=M+D|\O|-2$ be the minimal value of $M+D|\O|-r$ on $\overline{\O}$. It satisfies $A_D<D|\O|$. 
It is attained exactly on $I=\left\{ (x_1,0,0,0)\ \middle|\ x_1\in[0,1] \right\}$.

First, assume $A_D<0$. Then by testing \eqref{eq:counter_example_continuum_measures_weak_formulation} against a 
nonnegative test function in $\c_b(\O)$ supported in, for instance, $\left\{ x\in\O\ \middle|\ \operatorname{dist}(x,I)\leq\varepsilon \right\}$ 
for a sufficiently small $\varepsilon>0$, we get a contradiction. Hence $A_D\geq 0$.

Next, assume $0<A_D<D|\O|$. Testing \eqref{eq:counter_example_continuum_measures_weak_formulation} against 
$(M+D|\O|-r)^{-1}\in\c_b(\O)$, we find $M=DM\int_{\O}\frac{1}{M+D|\O|-r}$, hence
$D\int_{\O}\frac{1}{M+D|\O|-r}=1$. After a few changes of variables, this equality reads:
\begin{equation*}
    \begin{split}
        1 & = D\int_{0}^1\frac{2\pi^2\rho^3+4\pi\rho^2}{A_D+\rho^2}\upd\rho \\
        & = 2\pi D\int_{0}^1\frac{\pi\rho(\rho^2+A_D)-\pi A_D\rho+2(\rho^2+A_D)-2 A_D}{A_D+\rho^2}\upd\rho \\
        & = 2\pi D\int_{0}^1 \left(\pi\rho-\frac{\pi A_D}{2}\frac{2\rho}{A_D+\rho^2}+2-2\sqrt{A_D}\frac{1/\sqrt{A_D}}{1+(\rho/\sqrt{A_D})^2} \right) \upd\rho \\
        & = 2\pi D\left( \frac{\pi}{2} -\frac{\pi A_D}{2}\ln(A_D+1)+\frac{\pi A_D}{2}\ln(A_D) +2- 2\sqrt{A_D}\arctan\left(\frac{1}{\sqrt{A_D}}\right)\right).
    \end{split}
\end{equation*}
However the convergence of the right-hand side to $0$ as $D\to 0$ is a contradiction.

Therefore, provided $D$ is small enough, $A_D=M+D|\O|-2=0$ exactly. This defines uniquely the total mass $M=2-D|\O|$
(which is positive indeed provided $D<\frac{2}{|\O|}$).
This also means that $x\in\O\mapsto M+D|\O|-r(x)$ is nonnegative and vanishes exactly on $I$.

Testing \eqref{eq:counter_example_continuum_measures_weak_formulation} against a test function of the form 
\begin{equation*}
    x\mapsto
    \begin{cases}
        \frac{\varphi(x)}{2-r(x)} & \text{if }x\in\omega, \\
        0 & \text{if }x\notin\omega,
    \end{cases}
\end{equation*}
with $\omega$ an arbitrary open measurable subset, $\varphi\in\c_b(\omega)$ and 
$\overline{\omega}\subset\operatorname{int}(\O\setminus I)$,
we deduce that the measure $u$ admits in $\operatorname{int}(\O\setminus I)$ the density 
\begin{equation*}
    f:x\in\O\setminus I\mapsto \frac{D(2-D|\O|)}{2-r(x)}.
\end{equation*}
This function $f$ is integrable in $\O\setminus I$ and
\begin{equation*}
    \|f\|_{L^1(\O\setminus I)} = D(2-D|\O|)\int_{0}^1\frac{2\pi^2\rho^3+4\pi\rho^2}{\rho^2}\upd\rho = D(2-D|\O|)(\pi^2+4\pi).
\end{equation*}
Therefore the measure $u$ necessarily has the form 
\begin{equation*}
    \upd u(x)=f(x)\upd x+\upd \mu_I(x),
\end{equation*}
with $\mu_I$ a nonnegative measure supported in the interval $I$ such that 
\begin{equation*}
    \int_{\O}\upd \mu_I = M-\|f\|_{L^1(\O)} = (2-D|\O|)\left(1-D(\pi^2+4\pi)\right)
\end{equation*}
(which is positive indeed provided $D<\frac{2}{|\O|}$ and $D<\frac{1}{\pi^2+4\pi}$; since $|\O| = \frac12\pi^2+\frac{4}{3}\pi$,
this condition reduces to $D<\frac{1}{\pi^2+4\pi}$).

Conversely, from the previous calculations, any measure $u$ of the form
\begin{equation*}
    \upd u(x)=f(x)\upd x+\upd \mu_I(x),
\end{equation*}
with $\mu_I$ a nonnegative measure supported in the interval $I$ such that $\int_{\O}\upd \mu_I = (2-D|\O|)\left(1- D(\pi^2+4\pi)\right)$
is indeed a positive stationary solution, in the sense of measures, of \eqref{eq:counter_example_continuum_measures}, provided
$0<D<\frac{1}{\pi^2+4\pi}$.
\end{proof}

\section{Local asymptotic stability}

In this section, we assume the existence in $E^+$ of a principal eigenfunction of $\left(-\frac{1}{b}\M -\frac{r}{b},\B\right)$.
The unique positive stationary state given by Theorem
\ref{thm:existence_uniqueness_stationary_state} is denoted $u^\star$. We intend to study the existence of a neighborhood
$\mathcal{V}\subset \operatorname{int}(E^+)$ of $u^\star$ such that the singleton $\{u^\star\}$ is the global attractor of the solutions
of \eqref{general_system} with initial data $u_0\in\mathcal{V}$.

By virtue of standard results on dynamical systems in Banach spaces (\textit{e.g.}, \cite[Theorem 5.1.1]{Henry_1981}), 
$u^\star$ is locally asymptotically stable for \eqref{general_system} if $0$ is locally asymptotically stable for
the system linearized at $u^\star$.

We recall the notation $H$ for the underlying Hilbert space. More precisely, the pair $(\I,H)$ denotes either 
$([N'],\C^{N'})$ if Assumption \ref{ass:reduction_to_ODEs} holds or $(\N^\star,L^2(\O,\C))$ otherwise. The set $\I$
will typically be the set of indexes for eigenvalues.
The inner product $(\cdot|\cdot)_{L^2(\O,\C)}$ restricted to $H^2$ coincides with the canonical inner product of $H$, 
so that in all cases we will work with the same scalar product.

\subsection{Reduced problem}

In this subsection, in order to simplify future algebraic manipulations, we take profit of the Banach algebra
properties of $E$ to reduce the general system to a particular case.

By construction, $u^\star\in\operatorname{int}(E^+)$, and since $E$ is a Banach algebra, 
$\frac{1}{u^\star}\in\operatorname{int}(E^+)$. 

Let $\widetilde{E}=\frac{1}{u^\star}E=\left\{ \frac{u}{u^\star}\ \middle|\ u\in E \right\}$. In fact $\widetilde{E}=E$. 

By definition of $u^\star$, for any $v\in E$,
\begin{equation*}
    -(\M  u^\star) v = ru^\star v-bu^\star v\int_{\O}(u^\star)^p c\upd\mu ,
\end{equation*}
whence:
\begin{equation*}
    rv = -\left(\frac{1}{u^\star}\M  u^\star\right)v+\left(b\int_{\O}(u^\star)^p c \upd\mu \right)v.
\end{equation*}

Now, we consider the evolution problem \eqref{general_system} and we change the unknown $u$ into $v=\frac{u}{u^\star}$. 

Straightforward algebra leads to 
\begin{equation*}
    \frac{1}{u^\star}\r(vu^\star,x) = -\left(\frac{1}{u^\star}\M  u^\star\right)v+\left(b\int_{\O}(u^\star)^p c \upd\mu \right)v - bv\int_{\O}v^p(u^\star)^p c \upd\mu
\end{equation*}
and then to the following Cauchy problem for $v$:
\begin{equation*}
    \begin{dcases}
        \partial_t v = \frac{1}{u^\star}\M (u^\star v) +\frac{1}{u^\star}\r(vu^\star,x)  & \text{in }(0,+\infty)\times\O, \\
        \frac{1}{u^\star}\B(u^\star v)=0 & \text{on }(0,+\infty)\times \Gamma, \\
        v(0)=\frac{u_0}{u^\star} & \text{in }\overline{\O}.
   \end{dcases}
\end{equation*}
Defining $v_0=\frac{u_0}{u^\star}$, 
$\widetilde{\M }=\frac{1}{u^\star}\M (u^\star\operatorname{id})-\left(\frac{1}{u^\star}\M u^\star\right)\operatorname{id}$, 
$\widetilde{\B}=\frac{1}{u^\star}\B(u^\star\operatorname{id})$,
$\widetilde{b}=\left(\int_{\O}(u^\star)^p c \upd\mu \right) b$, 
$\widetilde{ c }=\frac{(u^\star)^p c }{\int_{\O}(u^\star)^p c \upd\mu }$, it reads:
\begin{equation*}
    \begin{dcases}
        \partial_t v = \widetilde{\M }v + \widetilde{b}v\left( 1 - \int_{\O}v^p \widetilde{ c } \upd\mu \right) & \text{in }(0,+\infty)\times\O, \\
        \widetilde{\B} v = 0 & \text{on }(0,+\infty\times \Gamma, \\
        v(0)=v_0 & \text{in }\overline{\O}.
   \end{dcases}
\end{equation*}

It can be verified that this reduced new problem has the same structure as \eqref{general_system}. In particular,
$(\widetilde{\M},\widetilde{\B})$ keeps the form \eqref{definition_M},
it is compatible with $E$ in the sense of Assumption \ref{ass:compatibility}, 
$\widetilde{\M}_{\widetilde{\B}}$ is an irreducible operator in the sense of Assumption \ref{ass:irreducibility},
$\widetilde{b},\widetilde{c}\in\operatorname{int}(E^+)$,
$v_0\in E^+$, $v\in\mathcal{C}^1([0,+\infty),E^+)$ and $1\in\ker(\widetilde{\M}_{\widetilde{\B}})$. 
To verify that $1\in\dom_E\left(\left(\widetilde{\M}_{\widetilde{\B}}\right)^\star\right)$, we have to verify that $\frac{1}{u^\star}\in\dom_E(\MB^\star)$. Since $\frac{1}{u^\star}\in E$,
we only have to verify that $\frac{1}{u^\star}\in\dom_{\BUC(\overline{\O})}(\MB^\star)$. This follows from the definition of $\M$ in 
\eqref{definition_M} and from the equalities $\nabla\frac{1}{u^\star}=-\frac{1}{u^\star}\nabla u^\star$ and
$\Delta\frac{1}{u^\star}=-\frac{1}{u^\star}\Delta u^\star+\frac{1}{(u^\star)^2}|\nabla u^\star|^2$ when they make sense.
Roughly speaking, $\frac{1}{u^\star}$ has exactly the same regularity as $u^\star$, and this regularity is enough to be in the domain of
$\MB^\star$. We deliberately omit details.

Moreover, if Assumption \ref{ass:reduction_to_ODEs} is satisfied for \eqref{general_system}, then it is satisfied for \eqref{reduced_system}.

In other words, dropping the $\widetilde{\cdot}$ for readability, we are now concerned with the 
local asymptotic stability of $1$ for:
\begin{equation}\label{reduced_system}\tag{Red}
    \begin{dcases}
        \partial_t v = \M v + bv\left(1 - \int_{\O}v^p c \upd\mu \right) & \text{in }(0,+\infty)\times\O, \\
        \B v = 0 & \text{on }(0,+\infty)\times \Gamma, \\
        v(0)=v_0 & \text{in }\overline{\O},
   \end{dcases}
\end{equation}
with the additional assumption $\int_{\O} c \upd\mu =1$.

After the reduction \eqref{reduced_system}, the linearized system at $v=1$ reads:
\begin{equation}\label{eq:reduced_linearized_system}\tag{RedLin}
    \begin{dcases}
           \partial_t h = \M  h - pb\int_{\O}h c\upd\mu & \text{in }(0,+\infty)\times\O, \\
           \B h = 0 & \text{on }(0,+\infty)\times \Gamma, \\
           h(0)=v_0-1 & \text{in }\overline{\O}.
    \end{dcases}
\end{equation}

When Assumption \ref{ass:reduction_to_ODEs} holds, the rank-one operator $u\mapsto pb\int_{\O}hc\upd\mu$ can be rewritten $pb\otimes c$, 
with $\otimes$ the usual notation for the outer product in $\R^{N'}$. Below, we use this outer product notation even if 
Assumption \ref{ass:reduction_to_ODEs} does not hold.

The interior operator $-\M +pb\otimes c$ is the sum of a monotone operator $-\M $ and a rank-one operator $pb\otimes c$. 
Its spectral properties are intricate even when it is finite-dimensional \cite{Bierkens_Ran,Anehila_Ran_2022} 
and, as claimed in the introduction, there exist counterexamples to the linear stability.

\subsection{Examples of linear instability}

\subsubsection{First counterexample: in the reduced coordinates, with $b$ and $c$ close to orthogonality}
Inspired by \cite[Example 3.4]{Anehila_Ran_2022}, we consider the discrete special case \eqref{discrete_ode_system} 
with $N'=3$, $p=1$ and
\begin{equation*}
    \M =
    \begin{pmatrix}
        -1 & 1 & 0 \\
        0 & -1 & 1 \\
        1 & 0 & -1
    \end{pmatrix}
    ,\quad 
    b=1.0002
    \begin{pmatrix}
        10 \\ .001 \\ .001
    \end{pmatrix}
    ,\quad 
     c =\frac{1}{1.0002}
    \begin{pmatrix}
        .0001 \\ 1 \\ .0001 
   \end{pmatrix}.
\end{equation*}
Then $-\M 1=0$ and $\sum_{i=1}^3 c _i=1$ as required and 
\begin{equation*}
   \operatorname{sp}(-\M +b\otimes c )\simeq\{-0.039215 + 1.80168 i,-0.039215 - 1.80168 i,3.08043\}.
\end{equation*}
Consequently, $0$ is unstable for the linearized system \eqref{eq:reduced_linearized_system}. By standard results \cite[Theorem 5.1.3]{Henry_1981}), the stationary solution $v=1$ is then unstable for the semilinear system \eqref{reduced_system}.

Note that, by changing $b$ into $b/p$, we also obtain a counterexample for every possible value of $p\geq 1$.

This counterexample shows the importance of \eqref{angle_condition} in Theorem \ref{thm:LAS}. Indeed, 
$\M$ is normal, the condition \eqref{spectrum_condition} is satisfied (because $\operatorname{sp}(-\M) = \left\{ 0, \frac12(3\pm i\sqrt{3}) \right\}$), but the condition \eqref{angle_condition} is not satisfied:
\begin{equation*}
    b\cdot c + \frac{\|b\|_{1}\|c \|_{1}}{3}- \sqrt{\left(\|b\|_{2}^2-\frac{\|b\|_{1}^2}{3}\right)\left(\|c \|_{2}^2-\frac{\|c \|_{1}^2}{3}\right)} \simeq -3.3287.
\end{equation*}

More generally, by considering perturbations of the form
\begin{equation*}
    \begin{pmatrix}
        \begin{pmatrix}
        -1 & 1 & 0 \\
        0 & -1 & 1 \\
        1 & 0 & -1
    \end{pmatrix} & 0 \\
    0 & \mathbf{I}_{N'-3}
    \end{pmatrix}
    +\varepsilon\mathbf{1}\otimes\mathbf{1},\quad
    \begin{pmatrix}
        b \\
        0
    \end{pmatrix}
    +\varepsilon\mathbf{1},\quad
    \frac{1}{1+N'\varepsilon}\left(
    \begin{pmatrix}
        c \\
        0
    \end{pmatrix}
    +\varepsilon\mathbf{1}\right),
\end{equation*}
we deduce that, in the discrete special case \eqref{discrete_ode_system}, there exist counterexamples to the linear stability of $0$ for 
\eqref{eq:reduced_linearized_system} for every $N'\geq 4$. 
On the contrary, in the cases $N'\in\{1,2\}$, the spectrum of $-\M +b\otimes c$ is unconditionally in the open right half-plane.

\subsubsection{Second counterexample: in the original coordinates, with symmetric mutations}
Inspired by this first counterexample, we found a new counterexample, where in addition the operator $\M$ in 
\eqref{general_system} before the reduction to \eqref{reduced_system} is self-adjoint. Thus we present this counterexample with the notations of \eqref{general_system}:
$N'=3$, $p=1$,
\begin{equation*}
    \M = 0.01
    \begin{pmatrix}
        -1 & 1 & 0 \\
        1 & -2 & 1 \\
        0 & 1 & -1
    \end{pmatrix},
\end{equation*}
\begin{equation*}
    b=
    \begin{pmatrix}
        .001 \\ .001 \\ 10
    \end{pmatrix}
    ,\quad 
     c =
    \begin{pmatrix}
        1 \\ .000001 \\ .00000001 
   \end{pmatrix}
    ,\quad
    u^\star = 1000
    \begin{pmatrix}
        1 \\ 100 \\ 10000
    \end{pmatrix}
    .
\end{equation*}
Then $r$ can be reconstructed from $u^\star$ as follows:
\begin{equation*}
    r = -\diag\left(\frac{1}{u^\star}\right)\M u^\star+b( c \cdot u^\star) \simeq
    \begin{pmatrix}
        .0102 \\ .0201 \\ 10002
    \end{pmatrix}.
\end{equation*}
Moving now to the reduced system \eqref{eq:reduced_linearized_system}, we set:
\begin{equation*}
    \begin{split}
    \widetilde{\M} & = \diag\left(\frac{1}{u^\star}\right)\M\diag(u^\star) - \diag\left(\frac{1}{u^\star}\right)\diag\left(\M u^\star\right) \\
    & \simeq
    \begin{pmatrix}
        -1 & 1 & 0 \\
        .0001 & -1.0001 & 1 \\
        0 & .0001 & -.0001
    \end{pmatrix},
    \end{split}
\end{equation*}
\begin{equation*}
    \widetilde{c} = \diag(u^\star) c = 1000 \begin{pmatrix}1 \\ 0.0001 \\ 0.0001\end{pmatrix},
\end{equation*}
and we compute the spectrum:
\begin{equation*}
   \operatorname{sp}(-\widetilde{\M} +b\otimes \widetilde{c})\simeq\{-9.4331 + 18.6661 i, -9.4331 - 18.6661 i, 22.8666\}.
\end{equation*}

In this counterexample, numerical calculations show that $-\widetilde{\M}$ is not normal, \eqref{spectrum_condition}
is satisfied, but \eqref{angle_condition} is not satisfied.

Again, by changing $b$ into $b/p$, we also obtain a counterexample for every possible value of $p\geq 1$, and by considering appropriately chosen perturbations,
we also obtain counterexamples for every $N'\geq 4$.

\subsection{Conditional local asymptotic stability by projection}

Here we prove Theorem \ref{thm:LAS}, condition \ref{cond:LAS_projection}.

\begin{proof}
If $c$ is an adjoint principal eigenfunction of $-\M$, then, by taking the inner product of \eqref{eq:reduced_linearized_system} with $c$, 
the function $f=\left( c \middle|h\right)$ (function of time only) satisfies
\begin{equation*}
    \begin{dcases}
        f'=-p\left( c \middle|b\right)f, \\
        f(0) = (c|h(0)).
    \end{dcases}
\end{equation*}
Thus, substituting in \eqref{eq:reduced_linearized_system},
\begin{equation*}
    \begin{dcases}
        \partial_t h - \M h = -pf(0)\upe^{-p\left( c \middle|b\right)t}b, \\
        \B h = 0.
    \end{dcases}
\end{equation*}

Since $\left( c \middle|b\right)> 0$, 
it follows from standard semigroup theory that $\|h(t)\|_E\to 0$ as $t\to+\infty$, that is, $0$ is locally asymptotically
stable for \eqref{eq:reduced_linearized_system}. 
\end{proof}

\subsection{Conditional local asymptotic stability by construction of an entropy}

Here we prove Theorem \ref{thm:LAS}, condition \ref{cond:LAS_entropy}.

\begin{proof}
By taking the inner product of \eqref{eq:reduced_linearized_system} with $\frac{c}{b}h\in E$, we obtain
\begin{equation*}
   \frac12\partial_t\left\|\sqrt{\frac{c}{b}}h\right\|_{L^2}^2+\left(h\middle|-\frac{c}{b}\M h\right) + p\left( h|c\right)^2=0.
\end{equation*}

Since $1_E\in\ker\left(\left(\frac{c}{b}\MB\right)^\star\right)$ by assumption, the scalar product $\left(h\middle|-\frac{c}{b}\M h\right)$ satisfies, 
by definition of $\sigma_2$ (\textit{cf.} \eqref{def:spectral_gap}), 
\begin{equation*}
    \begin{split}
        \left(h\middle|-\frac{c}{b}\M h\right) & = \left(h-(h|1_E)1_E\middle|-\frac{c}{b}\M (h-(h|1_E)1_E)\right) \\
        & \geq \sigma_2\|h-(h|1_E)1_E\|_{L^2}^2.
    \end{split}
\end{equation*}
Therefore, by virtue of the Parseval or Pythagorean theorem,
\begin{equation*}
    \left(h\middle|-\frac{c}{b}\M h\right) + p\left( h|c\right)^2
    \geq \min\left(\sigma_2,\frac{p}{\|1/ c \|_{L^\infty}^2}\right)\|h\|_{L^2}^2.
\end{equation*}

Moreover, 
\begin{equation*}
    \|h\|_{L^2}^2\geq\left\|\frac{c}{b}\right\|_{L^\infty}^{-1}\left\|\sqrt{\frac{c}{b}}h(t)\right\|_{L^2}^2.
\end{equation*}

Thus there exists a positive constant $C>0$ such that 
\begin{equation*}
    -\frac12\partial_t\left\|\sqrt{\frac{c}{b}}h\right\|_{L^2}^2 \geq C\left\|\sqrt{\frac{c}{b}}h\right\|_{L^2}^2.
\end{equation*}
Then Gronwall's lemma implies that $0$ is locally asymptotically stable for \eqref{eq:reduced_linearized_system} in the topology of $H$.
\end{proof}

\subsection{Conditional local asymptotic stability by diagonalization}

Before proving the last part of Theorem \ref{thm:LAS}, we focus on a particular case of condition \ref{cond:GAS_diagonalization}
that corresponds to a standard method in stability analysis. 

\begin{proof}
If:
\begin{enumerate}
   \item a resolvent of $-\MB$ is normal and compact on $L^2(\O,\C)$;
   \item $(f_k)_{k\in\I}$ is an orthonormal basis of eigenfunctions of $-\MB$ associated with complex eigenvalues $(\lambda_k)_{k\in\I}$ with $\lambda_1=0$ and $f_1=1_E/\sqrt{\mu(\O)}$;
   \item $b=\|b\|_E 1_E$;
\end{enumerate}
then by decomposing $h$ into $h=\sum_{k\in\I}h_k f_k$, we find the following infinite system of ordinary
differential equations:
\begin{equation*}
    \begin{dcases}
        h_1'+\lambda_1 h_1+p\|b\|_E \sum_{k\in\I}h_k\left( c \middle|f_k\right)_{L^2(\O,\R)} =0, \\
        h_k'+\lambda_k h_k=0\quad\text{for all }k\geq 2.
    \end{dcases}
\end{equation*}
By virtue of the Krein--Rutman theorem, for each $k\geq 2$, $\textup{Re}(\lambda_k)>\lambda_1=0$.
It follows that each $h_k$, $k\geq 2$, vanishes exponentially fast. Subsequently so does $h_1$. Therefore $0$ is locally asymptotically stable for \eqref{eq:reduced_linearized_system}. 
\end{proof}

\subsection{Conditional local asymptotic stability by completed diagonalization}

Now we prove Theorem \ref{thm:LAS}, condition \ref{cond:LAS_diagonalization}. In the particular case exposed above, the underlying
idea is to relate the eigenvalues of $-\MB$ and those of $(-\M+pb\otimes c,\B)$. 
Here we push this idea further, with a more precise spectral analysis of $(-\M+pb\otimes c,\B)$.
The proof will make it clear that the case $b\in\R 1_E$ plays a very special role, and in fact so does
the case $c\in\R 1_E$.

First, inspired by \cite{Dobosevych_Hryniv_2021}, we prove the following lemma on the Krein formula for the operator $(-\M+pb\otimes c,\B)$ -- also known
as the Sherman-Morrison formula in the context of matrix analysis.

For a linear operator $\mathcal{A}$ on the Hilbert space $H$, we denote as usual $\rho(\mathcal{A})$ its resolvent set.
The Krein formula uses the so-called \textit{characteristic function} $\lambda\in\rho(-\M)\mapsto p\left( c \middle|(-\M-\lambda,\B)^{-1}b\right)+1$. Its set
of zeros is denoted below $\mathcal{Z}$.

\begin{lemma}[Krein formula]\label{lem:Krein_formula}
    The set inclusion 
    \begin{equation*}
        \rho(-\MB)\setminus\mathcal{Z}\subset\rho(-\M+pb\otimes c, \B )
    \end{equation*}
    holds true. The operators
    \begin{equation*}
        R_0(\lambda)=(-\M -\lambda,\B)^{-1},\quad R_p(\lambda)=(-\M+pb\otimes c -\lambda,\B)^{-1}
    \end{equation*}
    are well-defined for any $\lambda\in\rho(-\MB)\setminus\mathcal{Z}$.
    
    Furthermore, for any $\lambda\in\rho(-\MB)\setminus\mathcal{Z}$, 
    \begin{equation}\label{eq:Krein_formula}
        R_p(\lambda) = R_0(\lambda)-\frac{p\left(R_0(\lambda)b\right)\otimes\left(R_0(\lambda)^\star c \right)}{p\left( c \middle|R_0(\lambda)b\right)+1}.
    \end{equation}
\end{lemma}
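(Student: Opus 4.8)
The plan is to build $R_p(\lambda)$ explicitly by solving the resolvent equation; the same computation simultaneously produces formula \eqref{eq:Krein_formula}, establishes the inclusion of resolvent sets, and yields uniqueness. Fix $\lambda\in\rho(-\MB)\setminus\mathcal{Z}$ and $f\in H$, and suppose $\varphi\in\dom_E(\MB)$ satisfies $(-\M+pb\otimes c-\lambda)\varphi=f$ in $\O$ with $\B\varphi=0$ on $\Gamma$. Since $b\otimes c$ acts by $(b\otimes c)\varphi=\left(c\middle|\varphi\right)b$ and $R_0(\lambda)=(-\M-\lambda,\B)^{-1}$ is a well-defined bounded operator on $H$ (because $\lambda\in\rho(-\MB)$), applying $R_0(\lambda)$ to the equation gives
\begin{equation*}
    \varphi+p\left(c\middle|\varphi\right)R_0(\lambda)b=R_0(\lambda)f.
\end{equation*}
Pairing with $c\in H$ and writing $\left(c\middle|R_0(\lambda)f\right)=\left(R_0(\lambda)^\star c\middle|f\right)$ yields
\begin{equation*}
    \left(c\middle|\varphi\right)\bigl(p\left(c\middle|R_0(\lambda)b\right)+1\bigr)=\left(c\middle|R_0(\lambda)f\right),
\end{equation*}
and since $\lambda\notin\mathcal{Z}$ the factor $p\left(c\middle|R_0(\lambda)b\right)+1$ is nonzero, so $\left(c\middle|\varphi\right)$ is uniquely determined; substituting it back gives $\varphi=R_p(\lambda)f$ with $R_p(\lambda)$ the right-hand side of \eqref{eq:Krein_formula}. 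In particular there is at most one such $\varphi$.

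It then remains to check that this candidate $R_p(\lambda)$ is a genuine two-sided bounded inverse. Boundedness on $H$ is immediate: $R_p(\lambda)$ is $R_0(\lambda)\in\mathcal{L}(H)$ minus a rank-one operator whose factors $R_0(\lambda)b$, $R_0(\lambda)^\star c$ are fixed vectors of $H$ and whose scalar denominator is nonzero. Its range lies in $\dom_E(\MB)$ with vanishing boundary trace, because $R_0(\lambda)f\in\dom_E(\MB)$ and $R_0(\lambda)b\in\dom_E(\MB)$ (here $b\in E$ and Assumption \ref{ass:compatibility} are used) and both satisfy $\B(\cdot)=0$. Finally, applying $(-\M+pb\otimes c-\lambda)$ to $R_p(\lambda)f$ and using $(-\M-\lambda)R_0(\lambda)=\operatorname{id}_H$ together with $(-\M-\lambda)R_0(\lambda)b=b$ gives
\begin{equation*}
    (-\M-\lambda)R_p(\lambda)f=f-\frac{p\left(c\middle|R_0(\lambda)f\right)}{p\left(c\middle|R_0(\lambda)b\right)+1}\,b,\qquad \left(c\middle|R_p(\lambda)f\right)=\frac{\left(c\middle|R_0(\lambda)f\right)}{p\left(c\middle|R_0(\lambda)b\right)+1},
\end{equation*}
so that the term $p\left(c\middle|R_p(\lambda)f\right)b$ cancels the correction and $(-\M+pb\otimes c-\lambda)R_p(\lambda)f=f$. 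Combined with the uniqueness from the first step, $-\M+pb\otimes c-\lambda$ with boundary operator $\B$ is boundedly invertible with inverse $R_p(\lambda)$; hence $\lambda\in\rho(-\M+pb\otimes c,\B)$, which proves both the inclusion and \eqref{eq:Krein_formula}. The argument applies verbatim when $H=\C^{N'}$ under Assumption \ref{ass:reduction_to_ODEs}, where it specializes to the classical Sherman--Morrison formula.

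The computation is short; the only point requiring care is the bookkeeping between the Banach algebra $E$ and the Hilbert space $H$. One must note that $b,c\in H$ (since $E\hookrightarrow\BUC(\overline{\O})\subset L^2(\O)=H$, or $H=\C^{N'}$ under Assumption \ref{ass:reduction_to_ODEs}), that $R_0(\lambda)$ is the resolvent of the $H$-realization of $-\MB$ and maps $E$ into $\dom_E(\MB)$, and that the adjoint $R_0(\lambda)^\star$ appearing in \eqref{eq:Krein_formula} is the Hilbert-space adjoint entering through $\left(c\middle|R_0(\lambda)\,\cdot\,\right)=\left(R_0(\lambda)^\star c\middle|\,\cdot\,\right)$. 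Once these identifications are fixed, no further ingredient is needed.
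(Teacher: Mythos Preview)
Your proof is correct and follows essentially the same approach as the paper: apply $R_0(\lambda)$ to the resolvent equation, take the scalar product with $c$ to isolate $\left(c\middle|\varphi\right)$, substitute back to obtain the explicit formula, then verify directly that the resulting operator is a bounded two-sided inverse. Your write-up is slightly more detailed in the verification step and in the $E$ versus $H$ bookkeeping, but the argument is the same.
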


\begin{proof}
    Let $\lambda\in\rho(-\MB)\setminus\mathcal{Z}$. Let $g\in\dom_H(R_0(\lambda))$, $f\in\dom_H(\M)$, and assume that:
    \begin{equation}\label{eq:invertibility_Krein_formula}
        g = (-\M+pb\otimes c -\lambda)f.
    \end{equation}

    Rewriting the equality as $g=(-\M-\lambda)f+pb\left( c \middle|f\right)$ and applying $R_0(\lambda)$ on both sides, we obtain:
    \begin{equation}\label{eq:pre_Krein_formula}
        R_0(\lambda)g = f + p\left( c \middle|f\right)R_0(\lambda)b.
    \end{equation}
    
    Taking the scalar product of \eqref{eq:pre_Krein_formula} with $c$ results in
    \begin{equation*}
        \left( c \middle|R_0(\lambda)g\right) = \left( c \middle|f\right) + p\left( c \middle|f\right)\left( c \middle|R_0(\lambda)b\right),
    \end{equation*}
    which, owing to $\lambda\notin\mathcal{Z}$, leads to:
    \begin{equation*}
        \left( c \middle|f\right) = \frac{\left( c \middle|R_0(\lambda)g\right)}{p\left( c \middle|R_0(\lambda)b\right)+1}.
    \end{equation*}

    Substituting this expression in \eqref{eq:pre_Krein_formula}, we find:
    \begin{equation}\label{eq:Krein_formula_in_the_proof}
        f = R_0(\lambda)g - \frac{p\left( c \middle|R_0(\lambda)g\right)}{p\left( c \middle|R_0(\lambda)b\right)+1}R_0(\lambda)b.
    \end{equation}

    A direct verification shows that $f\in\dom_H(-\M+pb\otimes c)=\dom_H(\M)$ and is indeed a solution of 
    \eqref{eq:invertibility_Krein_formula}.

    Therefore the operator $-\M+pb\otimes c -\lambda$ is surjective from $\dom_H(\M)$ onto $\dom_H(R_0(\lambda))$.
    The operator $-\M+pb\otimes c -\lambda$ is also injective since $g=0$ in \eqref{eq:Krein_formula_in_the_proof} gives $f=0$.
    Thus the operator $-\M+pb\otimes c -\lambda$ is invertible from $\dom_H(\M)$ onto $\dom_H(R_0(\lambda))$.
    By definition of the resolvent set, $\dom_H(R_0(\lambda))$ is dense in $H$. Consequently, again by definition of the resolvent set,
    $\lambda\in\rho(-\M+pb\otimes c, \B)$, and the resolvent operator is:
    \begin{equation*}
        R_p(\lambda) = R_0(\lambda)-\frac{p\left(R_0(\lambda)b\right)\otimes\left(R_0(\lambda)^\star c \right)}{p\left( c \middle|R_0(\lambda)b\right)+1}
    \end{equation*}
    as claimed.
\end{proof}

Now we turn to the proof of Theorem \ref{thm:LAS}, condition \ref{cond:LAS_diagonalization}.

\begin{proof}
Assume that $-\MB$ has a compact normal resolvent. 
By virtue of the spectral theorem for compact normal operators, there exists an orthonormal
basis of eigenfunctions $(f_k)_{k\in\I}$ of $-\MB$ associated with complex eigenvalues $(\lambda_k)_{k\in\I}$ such that 
$\lambda_1=0$, $f_1=1/\sqrt{\mu(\O)}$, and for all $k\in\I$, $\textup{Re}(\lambda_k)\leq\textup{Re}(\lambda_{k+1})$.
By virtue of the Krein--Rutman theorem, $\textup{Re}(\lambda_k)>0$ for each $k\geq 2$.

Note that, for any $\lambda\in\rho(-\MB)$ and any $k\in\I$, $(-\M-\lambda)f_k=(\lambda_k-\lambda)f_k$, so that $R_0(\lambda)f_k=\frac{1}{\lambda_k-\lambda}f_k$.
Therefore
\begin{equation*}
    \begin{split}
        \left( c \middle|R_0(\lambda)b\right) & = \left(\sum_{k\in\I}\left( c \middle|f_k\right)f_k|\sum_{k\in\I}\frac{(b|f_k)}{\lambda_k-\lambda}f_k\right) \\
        & = \sum_{k\in\I}\frac{\overline{\left( c \middle|f_k\right)}(b|f_k)}{\lambda_k-\lambda}
    \end{split}
\end{equation*}
and $\mathcal{Z}$ can be rewritten as:
\begin{equation*}
    \mathcal{Z} = \left\{ \lambda\in\rho(-\M)\ \middle|\ p\sum_{k\in\I}\frac{\overline{\left( c \middle|f_k\right)}(b|f_k)}{\lambda_k-\lambda}+1 = 0 \right\}.
\end{equation*}

Then the Krein formula \eqref{eq:Krein_formula} of Lemma \ref{lem:Krein_formula} shows that, 
in $\rho(-\MB)\setminus\mathcal{Z}$, 
the resolvent of $(-\M+pb\otimes c,\B)$ is a rank-one perturbation of the compact resolvent of $-\MB$.
Thus the resolvent of $(-\M+pb\otimes c,\B)$ is itself a compact operator. Its spectrum contains only eigenvalues. 
In view of the set inclusion $\rho(-\MB)\setminus\mathcal{Z}\subset\rho(-\M+pb\otimes c,\B)$, 
any eigenvalue of $(-\M+pb\otimes c,\B)$ is either an eigenvalue of $-\MB$ or an element of $\mathcal{Z}$.

To prove that $0$ is locally asymptotically stable for \eqref{eq:reduced_linearized_system}, it suffices then to prove that any eigenvalue of $(-\M+pb\otimes c,\B)$
has a positive real part. 

We begin by considering eigenvalues of $(-\M+pb\otimes c,\B)$ that are also eigenvalues of $-\MB$. The only eigenvalue of $-\MB$ with nonpositive real part is obviously
$\lambda_1=0$. Let $\varphi\in \ker(-\M+pb\otimes c,\B )$. Then, by orthogonal projection on each $f_k$ and by $\lambda_1=0$ and $f_1=1/\sqrt{\mu(\O)}$,
\begin{equation*}
    \begin{cases}
        \frac{p}{\sqrt{\mu(\O)}}(\varphi| c )(b|1) = 0, & \\
        p(\varphi| c )(b|f_k)+\lambda_k(\varphi|f_k) = 0 & \text{for all }k\in\I\setminus\{1\}.
    \end{cases}
\end{equation*}
From the first equality, we deduce that $\varphi\in c ^\perp$. Then the other equalities rewrite as
\begin{equation*}
    \lambda_k(\varphi|f_k) = 0 \quad \text{for all }k\in\I\setminus\{1\}.
\end{equation*}
Since $\lambda_k\neq 0$, this leads to $(\varphi|f_k)=0$ for each $k\in\I$, \textit{i.e.} $\varphi=0$. 

Thus $\ker(-\M+pb\otimes c,\B )=\{0\}$, \textit{i.e.} $0$ is not an eigenvalue of $(-\M+pb\otimes c,\B)$. 

Therefore any eigenvalue of $(-\M+pb\otimes c,\B)$ with nonpositive real part is an element of $\mathcal{Z}\cap\rho(-\MB)$.

By contradiction, assume there exists $\lambda\in ((-\infty,0]+i\R)\cap\mathcal{Z}\cap\rho(-\MB)$. 


On one hand, 
\begin{equation*}
    \begin{split}
        \sum_{k\in\I}\frac{\overline{\left( c \middle|f_k\right)}(b|f_k)}{\lambda_k-\lambda} & = -\frac{\left( c \middle|1\right)(b|1)}{\mu(\O)\lambda}+\sum_{k\in\I\setminus\{1\}}\frac{\overline{\left( c \middle|f_k\right)}(b|f_k)}{\lambda_k-\lambda} \\
        & = -\frac{\| c \|_{L^1}\|b\|_{L^1}}{\mu(\O)\lambda}+\sum_{k\in\I\setminus\{1\}}\frac{\overline{\left( c \middle|f_k\right)}(b|f_k)}{\lambda_k-\lambda} \\
        & = -\frac{\| c \|_{L^1}\|b\|_{L^1}}{\mu(\O)\lambda}+\sum_{k\in\I}\frac{\overline{\left( c -\left( c \middle|f_1\right)f_1 \middle| f_k\right)}\left(b-\left(b \middle| f_1 \right)f_1 \middle| f_k\right)}{\lambda_k-\lambda}.
    \end{split}
\end{equation*}
Thus, by definition of $\mathcal{Z}$,
\begin{equation*}
    \begin{split}
        \frac{1}{p}-\frac{\| c \|_{L^1}\|b\|_{L^1}\overline{\lambda}}{\mu(\O)|\lambda|^2} & = -\sum_{k\in\I}\frac{\overline{\left( c -\left( c \middle| f_1 \right)f_1 \middle| f_k \right)}\left(b-\left(b \middle| f_1\right)f_1 \middle| f_k \right)}{\lambda_k-\lambda} \\
        & = -\left(  c -\left( c \middle| f_1 \right)f_1 \middle| R_0(\lambda)\left(b-\left(b\middle|f_1\right)f_1\right) \right).
    \end{split}
\end{equation*}
Multiplying by $-2\lambda$, we get:
\begin{equation*}
    \frac{-2\lambda}{p}+2\frac{\| c \|_{L^1}\|b\|_{L^1}}{\mu(\O)} = 2\lambda\left(  c -\left( c \middle| f_1 \right)f_1 \middle| R_0(\lambda)\left( b-\left(b\middle| f_1\right)f_1\right) \right).
\end{equation*}

On the other hand, by orthogonality,
\begin{equation*}
    ( c |b) = \frac{\| c \|_{L^1}\|b\|_{L^1}}{\mu(\O)} + \left(  c -( c |f_1)f_1 | b-(b|f_1)f_1 \right).
\end{equation*}

Summing the two equalities and using the identity $\operatorname{id}+2\lambda R_0(\lambda) = -2\M R_0(\lambda)-\operatorname{id}$, we find:
\begin{equation}\label{eq:Autissier_proof_eigenvalue_identity}
    \frac{-2\lambda}{p} + ( c |b) + \frac{\| c \|_{L^1}\|b\|_{L^1}}{\mu(\O)} = \left(  c -( c |f_1)f_1 | \left(\left(-2\M R_0(\lambda)-\operatorname{id}\right)\left(b-(b|f_1)f_1\right)\right) \right).
\end{equation}

Since the left-hand side has a positive real part (recall $\textup{Re}(-\lambda)\geq 0$), the right-hand side cannot be zero, 
whence neither $ c -( c |f_1)f_1$ nor $b-(b|f_1)f_1$ are zero. 

This immediately gives a contradiction if $c\in\R 1_E$ or if $b\in\R 1_E$, and this ends the proof in these two cases.
In fact, in these two cases, $\mathcal{Z}$ is exactly the singleton $\left\{\frac{p\|c\|_{L^1}\|b\|_{L^1}}{\mu(\O)}\right\}$.
From now on, we consider the remaining case, namely $c\notin\R 1_E$, $b\notin\R 1_E$, and 
\eqref{spectrum_condition} and \eqref{angle_condition}.

Using the equalities $\| c -( c |f_1)f_1\|_{L^2}=\sqrt{\| c \|_{L^2}^2-( c |f_1)^2}$ and 
$\|b-(b|f_1)f_1\|_{L^2}=\sqrt{\|b\|_{L^2}^2-(b|f_1)^2}$, we set 
\begin{equation*}
    u=\frac{ c -( c |f_1)f_1}{\| c -( c |f_1)f_1\|_{L^2}},\quad v=\frac{b-(b|f_1)f_1}{\|b-(b|f_1)f_1\|_{L^2}}, 
    \quad \mathcal{A}_\lambda = -2\M R_0(\lambda)-\operatorname{id},
\end{equation*}
and now we rewrite \eqref{eq:Autissier_proof_eigenvalue_identity} as follows:
\begin{equation*}
    \frac{\frac{-2\lambda}{p} + ( c |b) + \frac{\| c \|_{L^1}\|b\|_{L^1}}{\mu(\O)}}{\sqrt{\| c \|_{L^2}^2-( c |f_1)^2}\sqrt{\|b\|_{L^2}^2-(b|f_1)^2}} = \left( u | \mathcal{A}_\lambda v \right).
\end{equation*}

Yet, simultaneously, \eqref{angle_condition} implies that
\begin{equation*}
    1< \frac{( c |b) + \frac{\| c \|_{L^1}\|b\|_{L^1}}{\mu(\O)}}{\sqrt{\| c \|_{L^2}^2-( c |f_1)^2}\sqrt{\|b\|_{L^2}^2-(b|f_1)^2}}.
\end{equation*}
Hence, in order to reach a contradiction, and therefore discard the existence of $\lambda$, 
it only remains to show that the real part of $\left( u | \mathcal{A}_\lambda v \right)$ is lesser than 
or equal to $1$. Since $u$ and $v$ are real-valued and since \eqref{eq:Autissier_proof_eigenvalue_identity} already
gives the positivity of this quantity, we only have to show that 
$|\left(u \middle| \frac12\left(\mathcal{A}_\lambda+\overline{\mathcal{A}_\lambda}\right) v\right)|\leq 1$. 
Note that, by definition, $u,v\in 1^\perp$. Therefore, by virtue of the Cauchy--Schwarz inequality, it suffices to prove that,
for all $w\in \dom_E(\MB)$, real-valued, such that $w\in 1^\perp$ and $\|w\|_{L^2}=1$, $\|\frac12\left(\mathcal{A}_\lambda+\overline{\mathcal{A}_\lambda}\right) w\|_{L^2}\leq 1$.

Now we set:
\begin{equation*}
    \mathcal{B}_\lambda = \frac12\left(\mathcal{A}_\lambda+\overline{\mathcal{A}_\lambda}\right) = -\M R_0(\lambda)-\overline{\M}\overline{R_0(\lambda)}-\operatorname{id}.
\end{equation*}

For any $k\in\I\setminus\{1\}$,
\begin{equation*}
    \begin{split}
        \mathcal{B}_\lambda f_k & = -\M(-\M-\lambda)^{-1}f_k +\overline{-\M R_0(\lambda)\overline{f_k}}-f_k \\
        & = \frac{\lambda_k}{\lambda_k-\lambda}f_k+\overline{\frac{\overline{\lambda_k}}{\overline{\lambda_k}-\lambda}\overline{f_k}}-f_k \\
        & = \left( \frac{\lambda_k}{\lambda_k-\lambda}+\frac{\lambda_k}{\lambda_k-\overline{\lambda}}-1\right)f_k \\
        & = \left( \lambda_k\frac{2\lambda_k-\lambda-\overline{\lambda}}{(\lambda_k-\lambda)(\lambda_k-\overline{\lambda})}-1 \right)f_k \\
        & = \left( 2\lambda_k\frac{\lambda_k-\textup{Re}(\lambda)}{\lambda_k^2-2\lambda_k\textup{Re}(\lambda)+|\lambda|^2}-1 \right)f_k \\
        & = \frac{\lambda_k^2-|\lambda|^2}{\lambda_k^2+2\lambda_k\textup{Re}(-\lambda)+|\lambda|^2} f_k.
    \end{split}
\end{equation*}
Hence the operator $\mathcal{B}_\lambda$ still admits each $f_k$ as an eigenfunction, and, for any $w\in \dom_E(\MB)\cap 1^\perp$ real-valued,
\begin{equation*}
    \mathcal{B}_\lambda w = \sum_{k\in\I\setminus\{1\}}(w|f_k)\mathcal{B}_\lambda f_k = \sum_{k\in\I\setminus\{1\}}(w|f_k)\frac{\lambda_k^2-|\lambda|^2}{\lambda_k^2+2\lambda_k\textup{Re}(-\lambda)+|\lambda|^2} f_k.
\end{equation*}

Consequently, by virtue of the Parseval or Pythagorean theorem, it suffices to prove:
\begin{equation}\label{eq:Autissier_proof_modulus_eigenvalues_condition}
    \left|\frac{\lambda_k^2-|\lambda|^2}{\lambda_k^2+2\lambda_k\textup{Re}(-\lambda)+|\lambda|^2}\right|\leq 1\quad\text{for all }k\in\I\setminus\{1\}.
\end{equation}

Fix momentarily $k\in\I\setminus\{1\}$ and define 
$\zeta = \frac{\lambda_k^2-|\lambda|^2}{\lambda_k^2+2\lambda_k\textup{Re}(-\lambda)+|\lambda|^2}$,
$\alpha=\textup{Re}(\lambda_k)> 0$, $\beta=\textup{Im}(\lambda_k)$, $\gamma=\textup{Re}(-\lambda)\geq 0$, $\delta=\textup{Im}(-\lambda)$. Then:
\begin{equation*}
    \zeta = \frac{\alpha^2-\beta^2-\gamma^2-\delta^2+i2\alpha\beta}{\alpha^2-\beta^2+\gamma^2+\delta^2+2\alpha\gamma+i2\alpha\beta(1+\gamma)}
\end{equation*}
whence
\begin{equation*}
    \begin{split}
        |\zeta|^2 & = \frac{(\alpha^2-\beta^2-\gamma^2-\delta^2)^2+4\alpha^2\beta^2}{(\alpha^2-\beta^2+\gamma^2+\delta^2+2\alpha\gamma)^2+4\alpha^2\beta^2(1+\gamma)^2} \\
        & = \frac{(\alpha^2-\beta^2-\gamma^2-\delta^2)^2+4\alpha^2\beta^2}{(\alpha^2-\beta^2-\gamma^2-\delta^2+2(\gamma^2+\delta^2+\alpha\gamma))^2+4\alpha^2\beta^2+4\alpha^2\beta^2\gamma(2+\gamma)} \\
        & = \frac{C}{C+4C'}
    \end{split}
\end{equation*}
where, on the last line, we set $C = (\alpha^2-\beta^2-\gamma^2-\delta^2)^2+4\alpha^2\beta^2>0$
and $C'= (\gamma^2+\delta^2+\alpha\gamma)^2+(\alpha^2-\beta^2-\gamma^2-\delta^2)(\gamma^2+\delta^2+\alpha\gamma)+\alpha^2\beta^2\gamma(2+\gamma)\in\R$.
It remains to verify the sign of $C'$. For readability we define $C''=\gamma^2+\delta^2>0$. Then:
\begin{equation*}
    \begin{split}
        C' & = (C'')^2+\alpha^2\gamma^2+2\alpha\gamma C''+(\alpha^2-\beta^2)(C''+\alpha\gamma)-(C'')^2-C''\alpha\gamma+\alpha^2\beta^2\gamma(2+\gamma) \\
        & = \alpha^2\gamma^2+\alpha\gamma C''+(\alpha^2-\beta^2)(C''+\alpha\gamma)+\alpha^2\beta^2\gamma(2+\gamma) \\
        & = (\alpha^2-\beta^2+\alpha\gamma)(C''+\alpha\gamma)+\alpha^2\beta^2\gamma(2+\gamma) \\
        & = (\alpha^2-\beta^2+\alpha\gamma)(\gamma^2+\delta^2+\alpha\gamma)+\alpha^2\beta^2\gamma(2+\gamma).
    \end{split}
\end{equation*}
By \eqref{spectrum_condition}, $\alpha\geq\beta$, whence $C'\geq 0$. Thus $|\zeta|\leq 1$.

Therefore \eqref{eq:Autissier_proof_modulus_eigenvalues_condition} is true as a consequence of \eqref{spectrum_condition}.
This ends the proof.
\end{proof}

\begin{remark}
The condition \eqref{angle_condition} is scale-invariant: it is equivalent to
\begin{equation*}
    \left(\frac{b}{\|b\|_{L^1}} \middle| \frac{c}{\|c\|_{L^1}}\right)+\frac{1}{\mu(\O)}> \sqrt{\left(\left\|\frac{b}{\|b\|_{L^1}}\right\|_{L^2}^2-\frac{1}{\mu(\O)}\right)\left(\left\|\frac{c}{\|c\|_{L^1}}\right\|_{L^2}^2-\frac{1}{\mu(\O)}\right)}.
\end{equation*}

It is in particular satisfied if $b$ is constant (this is the preceding special case where the linearized equation can be diagonalized) or if $c$ is constant. 

On the contrary, it is violated if, for instance, 
$\O = (0,2\pi)$, $p=1$, $b-\varepsilon$ is $x\mapsto\max(\sin x,0)$, $c-\varepsilon$ is $x\mapsto\max(\sin(\pi+x),0)$. 
In such a case, $(b|c)=2\pi\varepsilon^2 +4\varepsilon$, $\|b\|_{L^1}=\|c\|_{L^1}=2\left(\pi\varepsilon+1\right)$,
$\|b\|_{L^2}=\|c \|_{L^2}=\sqrt{2\pi\varepsilon^2+4\varepsilon+\frac{\pi}{2}}$. 
As $\varepsilon\to 0$, the left-hand side in \eqref{angle_condition} converges to $\frac{2}{\pi}$, whereas the right-hand side converges to
$\frac{\pi}{2}-\frac{2}{\pi}$. Indeed $\frac{2}{\pi}<\frac{\pi}{2}-\frac{2}{\pi}$. 

Hence the inequality \eqref{angle_condition} can be roughly understood as ``the angle between the directions of $b$ and $c$ is sufficiently small'' 
(hence the symbol $\angle$).

Interestingly, this geometrical alignment notion also appears in condition \ref{cond:LAS_entropy} of Theorem \ref{thm:LAS}, with the ratio between $c$ and $b$.
\end{remark}

\subsection{No condition in Theorem \ref{thm:LAS} is implied by another condition}

We actually prove this claim directly for the reduced system \eqref{reduced_system}, which can be conceived as a particular case of the general system
\eqref{general_system} obtained by adding the constraints $r=b$ and $\int_{\O} c =1$. 
In this case, $u^\star = 1$, $\MB=\widetilde{\M}_{\widetilde{\B}}$, and $c=\widetilde{c}$.

\begin{proof}[Proof that \ref{cond:LAS_projection} does not imply \ref{cond:LAS_entropy} or \ref{cond:LAS_diagonalization}]
Let $p=1$, $\O = (-1,1)$, $c =1/|\O|$, $\B=0$ and 
\begin{equation*}
    \M:u\mapsto\left(x\mapsto\int_{\O}J(x,y)u(y)\upd y - u(x)\right)
\end{equation*}
with $J(x,y)=J(y,x)$ and $\int_{\O}J(x,y)\upd y=1$. 
$\M$ is a self-adjoint nonlocal diffusion operator, whose resolvent is not compact.

\ref{cond:LAS_projection} is indeed true since $\frac{1}{|\O|}\M^\star 1=0$.

With an appropriate choice of $b$, $-\frac{c}{b}\M$ is not normal, whence \ref{cond:LAS_entropy} is false.

By lack of compactness, \ref{cond:LAS_diagonalization} are false. 
\end{proof}

\begin{proof}[Proof that \ref{cond:LAS_entropy} does not imply \ref{cond:LAS_projection} or \ref{cond:LAS_diagonalization}]
Let $p=1$. Choose $b$ and $c$, satisfying Neumann boundary conditions, such that \eqref{angle_condition} fails. 
Let $\MB=\frac{b}{c}\Delta_N$, where $\Delta_N$ is the Neumann Laplacian. Then $\MB^\star:u\mapsto\Delta_N\left(\frac{b}{c}u\right)$.

\ref{cond:LAS_entropy} is indeed true since $-\frac{c}{b}\MB=\Delta_N$.

By choice of $b$, it is not constant, so that $\frac{b}{c}c = b\notin\ker\Delta_N$, whence \ref{cond:LAS_projection} is false.

Since \eqref{angle_condition} is false and $b$ and $c$ are not constant, \ref{cond:LAS_diagonalization} is also false.
\end{proof}

\begin{proof}[Proof that \ref{cond:LAS_diagonalization} does not imply \ref{cond:LAS_projection} or \ref{cond:LAS_entropy}]
Let $p=1$, $\O=(0,2\pi)$, $b:x\mapsto 2+\cos x$, $c=1+\varepsilon b$ with $\varepsilon\in(0,1)$ so small that, by continuity, \eqref{angle_condition} holds true.
Let $\MB=\Delta_N$. 

\ref{cond:LAS_diagonalization} is indeed true by this particular choice.

\ref{cond:LAS_projection} is false because the non-constant function $c$ is not in the kernel of $-\MB^\star=-\Delta_N$.

\ref{cond:LAS_entropy} is false because  $-\left(\frac{1}{b}+\varepsilon\right)\Delta_N$ is not normal. Indeed, the test function $u=1$ yields:
\begin{equation*}
    \left(\frac{1}{b}+\varepsilon\right)\Delta\Delta\left[\left(\frac{1}{b}+\varepsilon\right)1\right]-\Delta\left[\left(\frac{1}{b}+\varepsilon\right)^2\Delta\right]1
    = \left(\frac{1}{b}+\varepsilon\right)\Delta^2\frac{1}{b} \neq 0.
\end{equation*}
\end{proof}

\section{Global asymptotic stability}

We recall that up to a change of coefficients, it is sufficient to study
the global asymptotic stability of $v=1$ for the reduced system \eqref{reduced_system}
with the extra assumption $\int_{\O} c =1$. 

We recall also the notation $(\I,H)$ for $([N'],\C^{N'})$ if Assumption \ref{ass:reduction_to_ODEs} holds or 
for $(\N^\star,L^2(\O,\C))$ otherwise. 

\subsection{Conditional global asymptotic stability by projection}

Here we prove Theorem \ref{thm:GAS}, condition \ref{cond:GAS_projection}.

\begin{proof}
    The proof follows exactly the same lines as for the local stability. 
    The function $f=\left( c |v\right)$ satisfies $f'=f-f^2$ and $f(t_0)>0$
    for some $t_0\geq 0$ given by Assumption \ref{ass:irreducibility}, whence $f\to 1$ and subsequently $v\to 1$.
    We deliberately omit details.
\end{proof}

\begin{remark}
    Another proof of the exact same result would use the line-sum-symmetry of the operator $c\MB$, namely the property $(c\MB)^\star 1_E=(c\MB) 1_E$, 
    in the manner of \cite{Berestycki_Nadin_Perthame_Ryzhik,Girardin_Griette_2020,Cantrell_Cosner_Lou_2012,Cantrell_Cosner_Lou_Ryan_2012}.
    Since $c\MB$ is line-sum-symmetric, it satisfies the following important inequality:
    \begin{equation*}
        \left(\frac{1}{w} \middle| c\M w\right) \geq \left(1_E \middle| c\M 1_E\right) = 0\quad\text{for all }w\in \operatorname{int}(E^+),
    \end{equation*}
    with in addition by virtue of Assumption \ref{ass:irreducibility} equality if and only $w\in\R 1_E$. 
    Thus, by taking the scalar product between \eqref{reduced_system} and $c\left(1-\frac{1}{v(t)}\right)$ 
    (here it is required that $v(t)\in\operatorname{int}(E^+)$ after some time $t_0\geq 0$ sufficiently large, which is not true
    in general under Assumption \ref{ass:irreducibility} but is true under stronger irreducibility assumptions), we obtain:
    \begin{equation*}
        \partial_t\left(c|v-\ln v\right) = -\left(\frac{1}{v} \middle| c\M v\right) + \left(c|v-1\right)\left(c|1-v\right)\leq 0.
    \end{equation*}
    Hence the function $t\mapsto \left(c|v(t)-\ln v(t)\right)$ is nonincreasing. It is also nonnegative, and therefore it converges to a nonnegative
    limit. With some work, it can be verified that this limit is $0$, whence $v(t)\to 1$.

    This argument has proved successful in \cite{Berestycki_Nadin_Perthame_Ryzhik,Girardin_Griette_2020} for different competition operators, 
    circulant instead of rank-one. With rank-one competition operators, unfortunately, it turns out to be just a less direct proof.
\end{remark}

\subsection{Conditional global asymptotic stability by construction of an entropy}

The entropy method requires first a change of unknown $v=1+h$ in the semilinear system, which leads to:
\begin{equation}\label{eq:entropy_change_of_unknown}
   \partial_t h = \M h + b + bh - b(1+h)\int_{\O}(1+h)^p c \upd\mu .
\end{equation}

For general $p\geq 1$, the term $(1+h)^p$ can be expanded as a binomial series, convergent for instance if $\|h\|_{L^\infty}<1$.
For general $p\in\N$, the expansion is given by the binomial formula. Below, we will focus on the proof for the case $p=1$ of Theorem \ref{thm:GAS}, condition \ref{cond:GAS_entropy}. 
Following and adapting the calculations could give an idea of the obstacles encountered when considering $p>1$. We do not detail these obstacles explicitly.

\begin{proof}
First, we prove additional $L^1$ and $L^2$ estimates on the solution.

Taking the scalar product between \eqref{reduced_system} and $c/b$ and using the orthogonality
between $-\frac{c}{b}\M (h(t)+1)$ and $1$, we obtain:
\begin{equation*}
    \partial_t\left\|\frac{c}{b}v(t)\right\|_{L^1} = \|cv(t)\|_{L^1}(1-\|cv(t)\|_{L^1}).
\end{equation*}
Let $m:t\mapsto\left\|cv(t)\right\|_{L^1}/\|\frac{c}{b}v(t)\|_{L^1}$.
This function is clearly uniformly bounded and uniformly positive, with bounds that only depend on $c$ and $b$. 
We can rewrite:
\begin{equation*}
    \partial_t\left\|\frac{c}{b}v(t)\right\|_{L^1} = m(t)\left\|\frac{c}{b}v(t)\right\|_{L^1}\left(1-m(t)\left\|\frac{c}{b}v(t)\right\|_{L^1}\right),
\end{equation*}
frow where it follows similarly, by comparison with logistic-type ordinary differential equations, that 
\begin{equation*}
    0<\liminf_{t\to+\infty}\left\|\frac{c}{b}v(t)\right\|_{L^1}\leq\limsup_{t\to+\infty}\left\|\frac{c}{b}v(t)\right\|_{L^1}<+\infty,
\end{equation*}
with bounds that only depend on $c$ and $b$. 

Therefore, there exists $\delta\in(0,1)$, dependent only on $c$, $b$ and $v_0$, such that, for all $t\geq 0$,
\begin{equation*}
    \|v(t)\|_{L^2}\geq \delta,\quad \delta\leq\|v(t)\|_{L^1}\leq\frac{1}{\delta}.
\end{equation*}

Now, equipped with these new estimates, we go back to \eqref{eq:entropy_change_of_unknown}. The binomial formula yields:
\begin{equation*}
    \partial_t h = \M h - b\left(c|h\right) - b(c|h)h.
\end{equation*}
Thus
\begin{equation}\label{eq:reduced_nonlinear_system_entropy_method}
    \frac{c}{b}\partial_t h + \left(-\frac{c}{b}\M \right)h +c\left(c|h\right) + c(c|h)h = 0.
\end{equation}

Let 
\begin{equation*}
   \mathcal{E}:h\in\dom_E(\MB)\mapsto \left(h\middle| -\frac{c}{b}\M h\right) + \left(c|h\right)^2+(c|h)\left(c|h^2\right)
\end{equation*}
and
\begin{equation*}
    D=\left\{ h\in \dom_E(\MB)\ \middle|\ h+1\geq 0,\ \|h+1\|_{L^2}\geq \delta,\ \delta\leq\|h+1\|_{L^1}\leq\frac{1}{\delta} \right\}.
\end{equation*}

Any $h\in D$ can be uniquely decomposed as $h=-1+\tau z$, with $\tau\geq \delta/\sqrt{\mu(\O)}$ and $z\in E^+$ such that $\|z\|_{L^2}=\sqrt{\mu(\O)}$. We denote 
\begin{equation*}
    D'=\left\{(\tau,z)\in(0,+\infty)\times\{\|z\|_{L^2}=\sqrt{\mu(\O)}\}\ \middle|\ \tau z+1\in D\right\}.
\end{equation*}
Remark that, for any $(\tau,z)\in D'$,
\begin{equation*}
    \frac{\delta}{\tau}\leq \|z\|_{L^1}\leq\frac{1}{\delta\tau},\quad \|z\|_{L^2}=\sqrt{\mu(\O)},
\end{equation*}
and, by virtue of the Cauchy--Schwarz inequality,
\begin{equation*}
    (c|z) = \int_{\O}cz\upd\mu\leq\sqrt{\int_{\O}cz^2\upd\mu}\sqrt{\int_{\O}c\upd\mu} = (c|z^2)^{1/2}.
\end{equation*}

Substituting $h=\tau z-1$ into the definition of $\mathcal{E}$, and using the orthogonality between $1$ and $-\frac{c}{b}\M z$,
we obtain, for any $(\tau,z)\in D'$:
\begin{equation*}
\begin{split}
    \mathcal{E}(\tau z-1) & = \tau^2\left(z\middle| -\frac{c}{b}\M z\right)+\left(\left( c \middle|\tau z-1\right)+\left( c \middle|(\tau z-1)^2\right)\right)\left( c \middle|(\tau z-1)\right). \\
    & = \tau^2\left(z\middle| -\frac{c}{b}\M z\right)+\left(\tau\left( c \middle|z\right)+\tau^2\left( c \middle|z^2\right)-2\tau\left( c \middle|z\right)\right)\left(\tau\left( c \middle|z\right)-1\right) \\
    & = \tau^2\left(z\middle| -\frac{c}{b}\M z\right)+\tau\left(\tau\left( c \middle|z^2\right)-\left( c \middle|z\right)\right)\left(\tau\left( c \middle|z\right)-1\right).
\end{split}
\end{equation*}
Our goal is to bound this quantity from below by a continuous function of $(\tau,z)$ that is nonnegative in $D'$
and whose zero set in $D'$ is exactly $\{(0,1),(1,1)\}$.

Recalling the calculations of the proof of Theorem \ref{thm:LAS}, we deduce:
\begin{equation*}
    \frac{\mathcal{E}(\tau z-1)}{\tau} \geq (c|z) +\tau\left(\sigma_2\mu(\O)^2-\sigma_2\|z\|_{L^1}^2-(c|z^2)-(c|z)^2\right) +\tau^2(c|z)(c|z^2).
\end{equation*}

Let
\begin{equation*}
    G:z\mapsto \sigma_2\mu(\O)^2-\sigma_2\|z\|_{L^1}^2-(c|z^2)-(c|z)^2
\end{equation*}
and
\begin{equation*}
    \mathcal{G}:(\tau,z) \mapsto \left(c| z\right)+G(z)\tau+(c|z^2)(c|z)\tau^2.
\end{equation*}

In $D'\cap\{G(z)\geq 0\}$, 
\begin{equation*}
    \mathcal{G}(\tau,z) \geq \mathcal{G}(0,z) = (c|z) \geq \inf_{\O}(c)\frac{\delta}{\tau}.
\end{equation*}

To study the complementary subset, $D'\cap\{G(z)< 0\}$, we rewrite the second-order polynomial of the variable $\tau$:
\begin{equation}\label{eq:entropy_method_expression_polynomial}
    \mathcal{G}(\tau,z) = \frac{4(c|z^2)(c|z)^2-G(z)^2}{4(c|z^2)(c|z)}+\frac{\left(2(c|z^2)(c|z)\tau+G(z)\right)^2}{4(c|z^2)(c|z)}
\end{equation}

It will also be useful to define the following quantity:
\begin{equation}\label{def:sigma_2_prime}
    \sigma_2' = \left(\sup_{\substack{z\in E^+\setminus \R 1_E \\ \|z\|_{L^2}=\sqrt{\mu(\O)} \\ G(z)<0}} \frac{\sqrt{(c|z^2)}-(c|z)}{\sqrt{\mu(\O)^2-\|z\|_{L^1}^2}}\right)^2.
\end{equation}
Remark that the inequalities $\sqrt{(c|z^2)}-(c|z)\geq 0$ and
$\mu(\O)^2-\|z\|_{L^1}^2\geq 0$ are both Cauchy--Schwarz inequalities. The equality case of the first one
corresponds to proportionality between $\sqrt{c}$ and $\sqrt{c}z$, namely between $z$ and $1$. 
Similarly, the equality case of the second one corresponds also to proportionality between $z$ and $1$. 
Thus both quantities $\sqrt{(c|z^2)}-(c|z)$ and $\mu(\O)^2-\|z\|_{L^1}^2$ are in some sense measures
of the distance between $z$ and $1$ in the sphere $\{\|z\|_{L^2}=\sqrt{\mu(\O)}\}$.
Therefore, on one hand, $\sigma_2'\geq 0$ by construction, and on the other hand, $\sigma_2'<\sigma_2$ by the assumption \eqref{entropy_method_condition}. 

Fix momentarily $(\tau,z)\in D'\cap\{G(z)< 0\}$.

First, we focus on the first term on the right-hand side of \eqref{eq:entropy_method_expression_polynomial}.
In this ratio, the denominator is bounded from above by 
$4(\max c)^{2}\mu(\O)\|z\|_{L^1}\leq 4(\max c)^{2}\mu(\O)\frac{1}{\delta\tau}$.
Moreover,
\begin{equation*}
    4(c|z^2)(c|z)^2-G(z)^2 = \left(2\sqrt{(c|z^2)}(c|z)-G(z)\right)\left(2\sqrt{(c|z^2)}(c|z)+G(z)\right)
\end{equation*}
Since $G(z)<0$, the first term on the right-hand side is bounded from below by $2(\min c)^{3/2}\sqrt{\mu(\O)}\|z\|_{L^1}\geq 2(\min c)^{3/2}\sqrt{\mu(\O)}\frac{\delta}{\tau}$. 
We investigate now the second term on the right-hand side. By definition of $G$,
\begin{equation*}
    \begin{split}
        2\sqrt{(c|z^2)}(c|z)+G(z) & = \sigma_2(\mu(\O)^2-\|z\|_{L^1}^2)-\left(\sqrt{(c|z^2)}-(c|z)\right)^2 \\
        & = \left(\sqrt{\sigma_2}\sqrt{\mu(\O)^2-\|z\|_{L^1}^2}+\sqrt{(c|z^2)}-(c|z)\right) \\
        & \quad\times\left(\sqrt{\sigma_2}\sqrt{\mu(\O)^2-\|z\|_{L^1}^2}-\left(\sqrt{(c|z^2)}-(c|z)\right)\right)
    \end{split}
\end{equation*}
The first term on the right-hand side above is bounded from below by $\sqrt{\sigma_2}\sqrt{\mu(\O)^2-\|z\|_{L^1}^2}$.
As for the second term on the right-hand side, we recognize the quantity that appears in \eqref{def:sigma_2_prime}, whence it 
is bounded from below by $(\sqrt{\sigma_2}-\sqrt{\sigma_2'})\sqrt{\mu(\O)^2-\|z\|_{L^1}^2}$.
Consequently, there exists a positive constant $C>0$, that does not depend on $(\tau,z)$, such that:
\begin{equation*}
    \frac{4(c|z^2)(c|z)^2-G(z)^2}{4(c|z^2)(c|z)} \geq C\left(\mu(\O)^2-\|z\|_{L^1}^2\right)\geq 0.
\end{equation*}

For the sake of future shortened notations, we define:
\begin{equation*}
    J(\tau z)=\sqrt{\mu(\O)}\frac{\mu(\O)\|\tau z\|_{L^2}^2-\|\tau z\|_{L^1}^2}{\|\tau z\|_{L^2}}=\tau\left(\mu(\O)^2-\|z\|_{L^1}^2\right).
\end{equation*}
We remark that, by virtue of the Parseval or Pythagorean theorem, $\mu(\O)\|\tau z\|_{L^2}^2-\|\tau z\|_{L^1}^2=\mu(\O)\|\tau z-(\tau z|f_1)f_1\|_{L^2}^2=\mu(\O)\|(\operatorname{id}-P_1)\tau z\|_{L^2}^2$, where $P_1$ is the orthogonal projection on $\R 1_E$.

We focus now on the second term on the right-hand side of \eqref{eq:entropy_method_expression_polynomial},
\begin{equation*}
    \begin{split}
        \frac{\left(2(c|z^2)(c|z)\tau+G(z)\right)^2}{4(c|z^2)(c|z)} & = \frac{\frac{1}{\tau}\left(2(c|\tau^2 z^2)(c|\tau z)+G(z)\tau^2\right)^2}{4(c|\tau^2 z^2)(c|\tau z)} \\
        & = \frac{\frac{1}{\tau^2}\left(\frac{2(c|\tau^2 z^2)(c|\tau z)+G(z)\tau^2}{\tau}\right)^2}{4(c|z^2)(c|z)}.
    \end{split}
\end{equation*}
The denominator is again bounded from above by $4(\max c)^{2}\mu(\O)\frac{1}{\delta\tau}$. To rewrite the numerator in a more convenient way, we define:
\begin{equation*}
    K(\tau z)= \sqrt{\mu(\O)}\frac{2(c|(\tau z)^2)(c|\tau z)-(c|(\tau z)^2)-(c|\tau z)^2}{\|\tau z\|_{L^2}}.
\end{equation*}
By definition of $G$, it follows that
\begin{equation*}
    \frac{\left(2(c|z^2)(c|z)\tau+G(z)\right)^2}{4(c|z^2)(c|z)} \geq \frac{1}{4(\max c)^{2}\mu(\O)\frac{1}{\delta}\tau}\left(K(\tau z)+J(\tau z)\right)^2
\end{equation*}
When $z=1$, the quantity $K(\tau z)+\tau\left(\mu(\O)^2-\|z\|_{L^1}^2\right)$ reduces to $2\tau^2-2\tau=2\tau(\tau-1)$.

Recall that $\tau z=h+1$.
Then, up to decreasing the constant $C>0$ in a way that does not depend on $h$, the quantity $\mathcal{E}(h)$ can be 
bounded from below in $D$ as follows:
\begin{equation*}
    \begin{split}
        \mathcal{E}(h) & \geq \tau\mathcal{G}\left(\tau,z)\right) \\
        & \geq C\min\left[1,J(h+1)+\left(K(h+1)+J(h+1)\right)^2\right].
    \end{split}
\end{equation*}
The function
\begin{equation*}
    \mathcal{F}:h\in D\mapsto \min\left[1,J(h+1)+\left(K(h+1)+J(h+1)\right)^2\right]
\end{equation*}
is nonnegative and, since $-1\notin D$, its only zero is $h=0$.

Now, back to $h$ being a solution of \eqref{eq:reduced_nonlinear_system_entropy_method}, we deduce immediately from
\begin{equation*}
    -\partial_t\left\|\sqrt{\frac{c}{b}}h(t)\right\|_{L^2}^2=\mathcal{E}(h(t))\quad\text{and}\quad h(t)\in D\quad\text{for all }t> 0
\end{equation*}
that $t\mapsto\left\|\sqrt{\frac{c}{b}}h(t)\right\|_{L^2}^2$ is nonincreasing and converges to some limit $a\geq 0$. 
In particular, 
\begin{equation*}
    \sqrt{\frac{a}{\sup_{\O}(c/b)}}\leq\|h(t)\|_{L^2}\leq\sqrt{\frac{1}{\inf_{\O}(c/b)}}\left\|\sqrt{\frac{c}{b}}h(0)\right\|_{L^2}\quad\text{for all }t\geq 0.
\end{equation*}
Let 
\begin{equation*}
    \underline{R}=\sqrt{\frac{a}{\sup_{\O}(c/b)}},\quad \overline{R}=\sqrt{\frac{1}{\inf_{\O}(c/b)}}\left\|\sqrt{\frac{c}{b}}h(0)\right\|_{L^2},
\end{equation*}
\begin{equation*}
    \widetilde{D}=D\cap \overline{B_{H}(-1_E,\overline{R})\setminus B_{H}(-1_E,\underline{R})}.
\end{equation*}
The set $\widetilde{D}$ is closed and bounded in $H$. When Assumption \ref{ass:reduction_to_ODEs} holds true and $H=\C^{N'}$, 
the uniform positivity of $\mathcal{F}$ in $\widetilde{D}$ is obvious by continuity. But when Assumption \ref{ass:reduction_to_ODEs} does not hold
and $H=L^2(\O,\C)$, the set $\widetilde{D}$ is not compact, and we resort instead to a careful study of the preimage 
$\mathcal{F}^{-1}([0,\varepsilon])\cap \widetilde{D}$ for small values of $\varepsilon>0$.

Let $\varepsilon\in(0,1)$ and $h\in\mathcal{F}^{-1}([0,\varepsilon])\cap\widetilde{D}$.
Necessarily,
\begin{equation*}
    \begin{cases}
        J(h+1)\leq\varepsilon, \\
        |K(h+1)+J(h+1)|\leq\sqrt{\varepsilon},
    \end{cases}
\end{equation*}
from where it follows that
\begin{equation*}
    \begin{cases}
        J(h+1)\leq\varepsilon, \\
        |K(h+1)|\leq\sqrt{\varepsilon}+\varepsilon.
    \end{cases}
\end{equation*}
We decompose $h+1=\alpha 1_E+w$, with $\alpha\in\R$ and $w\in (1_E)^\perp$. 
Since $h\in\widetilde{D}$, $\|h+1\|_{L^2}=\sqrt{\mu(\O)\alpha^2+\|w\|_{L^2}^2}\in[\delta,\overline{R}+1]$.
On one hand, by definition of $J$, 
\begin{equation*}
    \frac{\mu(\O)^{3/2}}{\overline{R}+1}\|w\|_{L^2}^2\leq\varepsilon.
\end{equation*}
On the other hand, $(c|(h+1)^2)=\alpha^2+2\alpha(c|w)+(c|w^2)$ and $(c|h+1)=\alpha+(c|w)$, so that:
\begin{equation*}
    \begin{split}
        K(h+1) & = \sqrt{\frac{\mu(\O)\alpha^2}{\mu(\O)\alpha^2+\|w\|_{L^2}^2}}K(\alpha 1_E)+\sqrt{\frac{\|w\|_{L^2}^2}{\mu(\O)\alpha^2+\|w\|_{L^2}^2}}K(w) \\
        & \quad +\frac{2\alpha\sqrt{\mu(\O)}\left((3\alpha-2)(c|w)+2(c|w)^2+(c|w^2)\right)}{\sqrt{\mu(\O)\alpha^2+\|w\|_{L^2}^2}}
    \end{split}
\end{equation*}
where $K(\alpha 1_E)=2\alpha(\alpha-1)$. Subsequently,
\begin{equation*}
    \begin{split}
        \frac{2\sqrt{\mu(\O)}\alpha^2|\alpha-1|}{\overline{R}+1} & \leq \sqrt{\varepsilon}+\varepsilon+|K(w)|+2\left(|3\alpha-2||(c|w)|+2(c|w)^2+(c|w^2)\right) \\
        & \leq \sqrt{\varepsilon}+\varepsilon+|K(w)| +2\left(3\frac{\overline{R}+1}{\sqrt{\mu(\O)}}+2\right)\max(c)\sqrt{\mu(\O)}\|w\|_{L^2} \\
        & \quad +4\max(c)^2\mu(\O)\|w\|_{L^2}^2+2\max(c)\|w\|_{L^2}^2.
    \end{split}
\end{equation*}
Remark that, since $h\in\widetilde{D}\subset D$, $\alpha=\frac{1}{\mu(\O)}(1|\alpha 1+w)=\frac{\|h+1\|_{L^1}}{\mu(\O)}\geq\frac{\delta}{\mu(\O)}$.
By Lipschitz continuity of $K$ in a neighborhood of $0$, we obtain the existence of a new positive constant $C'$, 
independent of the choice of $h$ and $\varepsilon$, such that:
\begin{equation*}
    |\alpha-1|\leq C'(\sqrt{\varepsilon}+\varepsilon).
\end{equation*}
It follows that there exists another positive constant, still denoted $C'>0$ for simplicity, such that 
\begin{equation*}
    \mu(\O)(\alpha-1)^2+\|w\|_{L^2}^2\leq C'\varepsilon.
\end{equation*}
Consequently,
\begin{equation*}
    \mathcal{F}^{-1}([0,\varepsilon])\cap\widetilde{D}\subset \overline{B\left(0,\sqrt{C'\varepsilon}\right)}.
\end{equation*}
In particular, and this is what we will use below, if $a>0$ and $\sqrt{C'\varepsilon}<\underline{R}$, then
$\mathcal{F}^{-1}([0,\varepsilon])\cap\widetilde{D}=\emptyset$.

Let us now conclude the proof.

Assume by contradiction that $a>0$. Then there exists $\varepsilon>0$ such that $\mathcal{F}^{-1}([0,\varepsilon])\cap\widetilde{D}=\emptyset$. Consequently, 
\begin{equation*}
    \underline{\mathcal{E}} = \inf_{t> 0}\mathcal{F}(h(t))\geq \inf_{w\in\widetilde{D}}\mathcal{F}(w))\geq\varepsilon> 0.
\end{equation*}
But then, by integrating over $(0,+\infty)$, we obtain: 
\begin{equation*}
    +\infty>\left\|\sqrt{\frac{c}{b}}h(0)\right\|_{L^2}^2-a\geq \int_{0}^{+\infty}\underline{\mathcal{E}}\upd t =+\infty,
\end{equation*}
an immediate contradiction. Thus $a=0$.

The equality $a=0$ precisely means that $t\mapsto\sqrt{\frac{c}{b}}h(t)$ converges in $H$ to $0$. Subsequently,
so does $t\mapsto h(t)$. In other words, $v(t)=h(t)+1\to 1$ in $H$. This ends the proof.
\end{proof}

\begin{remark}
The condition \eqref{entropy_method_condition} can be either true or false, as shown by taking $c=1/\mu(\O)$ and 
$\sigma_2$ very large or very small. 
In particular, from $\sqrt{\mu(\O)\|h\|_{L^2}^2-\|h\|_{L^1}^2}\geq\sqrt{\mu(\O)}\|h\|_{L^2}-\|h\|_{L^1}$, it
follows that \eqref{entropy_method_condition} is true if there exists $\sigma_2'\in(0,\sigma_2)$ such that, for all $h\in E$,
\begin{equation*}
    \|1\|_{L^2(c\upd\mu)}\|h\|_{L^2(c\upd\mu)}-\|h\|_{L^1(c\upd\mu)}\leq\sqrt{\sigma_2'}\left(\|1\|_{L^2(\upd\mu)}\|h\|_{L^2(\upd\mu)}-\|h\|_{L^1(\upd\mu)}\right).
\end{equation*}
The constants $\|1\|_{L^2(c\upd\mu)}$ and $\|1\|_{L^2(\upd\mu)}$ are the norms of the continuous embeddings 
of $L^2(\O,c\upd\mu)$ into $L^1(\O,c\upd\mu)$ and $L^2(\O,\upd\mu)$ into $L^1(\O,\upd\mu)$ respectively. 
Consequently, an appropriate relation between these two continuous embeddings and the spectral gap $\sigma_2$ is sufficient to imply the condition \eqref{entropy_method_condition}.
\end{remark}

\subsection{Conditional global asymptotic stability by diagonalization}

Here we prove Theorem \ref{thm:GAS}, condition \ref{cond:GAS_diagonalization}.

\begin{proof}
    By virtue of Theorem \ref{thm:instability_zero}, for all $t\geq 0$, $v(t)$ solution of \eqref{reduced_system} satisfies $\|v(t)\|_{L^1}=(1|v(t))>0$.

    Let $(f_k)_{k\in\I}$ be an orthonormal basis of eigenfunctions of $-\MB$ associated with complex eigenvalues $(\lambda_k)_{k\in\I}$ with $\lambda_1=0$. For each $k\in\I$, we denote $v_k=(v|f_k)f_k$. Note that $v_1(t)>0$ for all $t\geq 0$.
    
    Following \cite{Girardin_2017}, we consider for all $t\geq 0$ and each $k\in\I$ the quantity
    \begin{equation*}
        z_k(t) = \left\|\frac{v_k(t)}{v_1(t)}\right\|_{L^2}^2 = \int_{\O}\left|\frac{v_k(t)}{v_1(t)}\right|^2\upd\mu,
    \end{equation*}
    whose time derivative writes:
    \begin{equation*}
        \begin{split}
            \partial_t z_k & = \int_{\O}\partial_t\left(\textup{Re}\left(\frac{v_k}{v_1}\right)^2+\textup{Im}\left(\frac{v_k}{v_1}\right)^2\right)\upd\mu \\
            & = \int_{\O}\partial_t\left(\left(\frac{\textup{Re}(v_k)}{v_1}\right)^2+\left(\frac{\textup{Im}(v_k)}{v_1}\right)^2\right)\upd\mu \\
            & = \int_{\O}2\frac{\textup{Re}(v_k)}{v_1}\partial_t\frac{\textup{Re}(v_k)}{v_1}+2\frac{\textup{Im}(v_k)}{v_1}\partial_t\frac{\textup{Im}(v_k)}{v_1}\upd\mu \\
            & = \int_{\O}2\frac{\textup{Re}(v_k)}{v_1}\left(\frac{\textup{Re}(\partial_t v_k)}{v_1}-\frac{\textup{Re}(v_k)\partial_t v_1}{v_1^2}\right)+2\frac{\textup{Im}(v_k)}{v_1}\left(\frac{\textup{Im}(\partial_t v_k)}{v_1}-\frac{\textup{Im}(v_k)\partial_t v_1}{v_1^2}\right)\upd\mu \\
            & = \int_{\O}2\textup{Re}\left(\frac{\overline{v_k}}{v_1}\left(\frac{\partial_t v_k}{v_1}-\frac{v_k\partial_t v_1}{v_1^2}\right)\right)\upd\mu
        \end{split}
    \end{equation*}
    
    The function $v_k$ satisfies in $[0,+\infty)$:
    \begin{equation*}
        \partial_t v_k = (-\lambda_k+1) v_k -v_k\int_{\O}v^pc\upd\mu,
    \end{equation*}
    so that
    \begin{equation*}
        \frac{\partial_t v_k}{v_1}-\frac{v_k\partial_t v_1}{v_1^2} = -\lambda_k\frac{v_k}{v_1}.
    \end{equation*}

    Thus, in $[0,+\infty)$,
    \begin{equation*}
        \partial_t z_k = -2\textup{Re}(\lambda_k) z_k
    \end{equation*}
    namely $z_k(t)=\exp(-2\textup{Re}(\lambda_k) t)z_k(0)$ for all $t\geq 0$.

    Now it is convenient to introduce the spectral gap of $\MB$:
    \begin{equation*}
        \sigma_{\textup{gap}} = \min\left\{\textup{Re}(\lambda)\ \middle|\ \lambda\in\operatorname{sp}\left(-\MB\right)\setminus\{0\}\right\}.
    \end{equation*}
    By virtue of the Krein--Rutman theorem, $\sigma_{\textup{gap}}>0$. By virtue of the Parseval or Pythagorean theorem, for each $k\in\I$,
    \begin{equation*}
        z_k(1)\leq\frac{\int_{\O}|v_k(0)|^2\upd\mu}{\frac{1}{\mu(\O)}|(v(0)|1)|^2}\leq \frac{\|v_k(0)\|_{L^2}^2}{\frac{1}{\mu(\O)}\|v(0)\|_{L^1}^2}
        \leq \frac{\mu(\O)\|v(0)\|_{L^2}^2}{\|v(0)\|_{L^1}^2}<+\infty.
    \end{equation*}
    Therefore, for all $t\geq 0$ and each $k\in\I\setminus\{1\}$,
    \begin{equation*}
        \|v_k(t)\|_{L^2}^2\leq \frac{\|v_k(0)\|_{L^2}^2}{\|v(0)\|_{L^1}^2}\upe^{-2\sigma_{\textup{gap}} t}\|v(t)\|_{L^1}^2.
    \end{equation*}
    Recall that by virtue of Theorem \ref{thm:uniform_bound_dynamical_system}, $\|v(t)\|_{L^1}$ is uniformly bounded
    with respect to $t$.
    By summing the frequency estimates over the full range $\I$, we deduce:
    \begin{equation*}
        \begin{split}
            \|v(t)\|_{L^2}^2 & \leq \frac{1}{\mu(\O)}\|v(t)\|_{L^1}^2 + \frac{\upe^{-2\sigma_{\textup{gap}} t}\|v(t)\|_{L^1}^2}{\|v(0)\|_{L^1}^2}\sum_{k\in\I\setminus\{1\}}\|v_k(0)\|_{L^2}^2 \\
            & = \frac{1}{\mu(\O)}\|v(t)\|_{L^1}^2 + \frac{\upe^{-2\sigma_{\textup{gap}} t}\|v(t)\|_{L^1}^2}{\|v(0)\|_{L^1}^2}\left(\|v(0)\|_{L^2}^2-\frac{\|v(0)\|_{L^1}^2}{\mu(\O)}\right) \\
            & = \|v(t)\|_{L^1}^2\left(\frac{1}{\mu(\O)}+\upe^{-2\sigma_{\textup{gap}} t}\left(\frac{\|v(0)\|_{L^2}^2}{\|v(0)\|_{L^1}^2}-\frac{1}{\mu(\O)}\right)\right).
        \end{split}
    \end{equation*}

    Now we focus on the nonlocal term in \eqref{reduced_system}. Conveniently, we decompose $v=v_1f_1+w$ with 
    $w\in (1_E)^\perp$ and $f_1=\frac{1}{\sqrt{\mu(\O)}}$. 
    \begin{equation*}
        \begin{split}
            \int_{\O}v(t)^pc\upd\mu & = \int_{\O}\left(v_1(t)f_1+w(t)\right)^p c\upd\mu \\
            & = \int_{\O}\left(\frac{\|v(t)\|_{L^1}}{\mu(\O)}+w(t)\right)^p c\upd\mu \\
            & = \int_{\O}\frac{\|v(t)\|_{L^1}^p}{\mu(\O)^p}\left(1+\frac{w(t)\mu(\O)}{\|v(t)\|_{L^1}}\right)^p c\upd\mu \\
            & = \frac{\|v(t)\|_{L^1}^p}{\mu(\O)^p}\left\|\left(1+\frac{w(t)\mu(\O)}{\|v(t)\|_{L^1}}\right)^p c\right\|_{L^1}.
        \end{split}
    \end{equation*}
    It follows that 
    \begin{equation*}
        \begin{split}
            \left|\int_{\O}v(t)^pc\upd\mu-\frac{\|v(t)\|_{L^1}^p}{\mu(\O)^p}\right| 
            & \leq \frac{\|v(t)\|_{L^1}^p}{\mu(\O)^p}\left|\left\|\left(1+\frac{w(t)\mu(\O)}{\|v(t)\|_{L^1}}\right)^p c\right\|_{L^1}-1\right| \\
            & = \frac{\|v(t)\|_{L^1}^p}{\mu(\O)^p}\left|\int_{\O}\left|1+\frac{w(t)\mu(\O)}{\|v(t)\|_{L^1}}\right|^p c\upd\mu-\int_{\O}c\upd\mu\right| \\
            & \leq \frac{\|v(t)\|_{L^1}^p}{\mu(\O)^p}\left|\int_{\O}\left(\left(1+\left|\frac{w(t)\mu(\O)}{\|v(t)\|_{L^1}}\right|\right)^p-1\right) c\upd\mu\right| \\
            & \leq \frac{\|v(t)\|_{L^1}^p}{\mu(\O)^p}\int_{\O}\left|\left(1+\left|\frac{w(t)\mu(\O)}{\|v(t)\|_{L^1}}\right|\right)^p-1 \right|c\upd\mu.
        \end{split}
    \end{equation*}
    
    Also,
    \begin{equation*}
        \left\|\frac{w(t)\mu(\O)}{\|v(t)\|_{L^1}}\right\|_{L^2} \leq \mu(\O)\upe^{-\sigma_{\textup{gap}} t}\sqrt{\frac{\|v(0)\|_{L^2}^2}{\|v(0)\|_{L^1}^2}-\frac{1}{\mu(\O)}}.
    \end{equation*}
    Let $t_0\geq 0$ such that the right-hand side above is smaller than $1$ for all $t\geq t_0$.
    Then the finiteness of the term $\int_{\O}\left|\left(1+\left|\frac{w(t)\mu(\O)}{\|v(t)\|_{L^1}}\right|\right)^p-1 \right|c\upd\mu$ 
    is obvious in finite dimensions (when Assumption \ref{ass:reduction_to_ODEs} holds true and $H=\C^{N'}$). 
    In infinite dimensions (when Assumption \ref{ass:reduction_to_ODEs} does not hold and $H=L^2(\O,\C)$), we use the supplementary
    assumption $p\leq 2$:
    \begin{equation*}
        \begin{split}
            \int_{\O}\left|\left(1+\left|\frac{w(t)\mu(\O)}{\|v(t)\|_{L^1}}\right|\right)^p-1 \right|c\upd\mu
            & \leq \int_{\O}\left|\left(1+\left|\frac{w(t)\mu(\O)}{\|v(t)\|_{L^1}}\right|\right)^2-1 \right|c\upd\mu \\
            & = \int_{\O}\left(\left|\frac{w(t)\mu(\O)}{\|v(t)\|_{L^1}}\right|^2+2\left|\frac{w(t)\mu(\O)}{\|v(t)\|_{L^1}}\right|\right)c\upd\mu.
        \end{split}
    \end{equation*}
    In both cases, for all $t\geq t_0$,
    \begin{equation*}
        \int_{\O}v(t)^pc\upd\mu = \frac{\|v(t)\|_{L^1}^p}{\mu(\O)^p}\left(1+O(\upe^{-2\sigma_{\textup{gap}} t})\right).
    \end{equation*}

    Consequently, $v$ satisfies for all $t\geq t_0$:
    \begin{equation*}
        \partial_t v(t) = \M v(t) +v(t)\left(1-\frac{\|v(t)\|_{L^1}^p}{\mu(\O)^p}\right)+v(t)O(\upe^{-2\sigma_{\textup{gap}} t}).
    \end{equation*}
    Taking the scalar product with $1_E$ and using the orthogonality between $1_E$ and $\M v$, we obtain an approximate
    equation for the total mass of $v$:
    \begin{equation*}
        \partial_t\|v\|_{L^1}=\|v\|_{L^1}\left(1-\frac{\|v(t)\|_{L^1}^p}{\mu(\O)^p}\right)+O(\upe^{-2\sigma_{\textup{gap}} t}).
    \end{equation*}
    It follows from standard ordinary differential equation theory that $\|v(t)\|_{L^1}\to 1$ as $t\to+\infty$. 
    Subsequently, it follows by standard semigroup theory that $v\to 1$ in the topology of $E$.
\end{proof}

\subsection{Conditional global asymptotic stability by gradient flow structure}

In this subsection we prove Theorem \ref{thm:GAS}, condition \ref{cond:GAS_gradient_flow}. The proof is a direct adaptation of
the one in \cite{Jabin_Liu_2017}. The main difference is that, in \cite{Jabin_Liu_2017}, classical parabolic
estimates made it possible to shorten significantly the conclusion of the argument. Here, we give a different argument
that does not use parabolic estimates and that is therefore more adapted to our abstract framework.

\begin{proof}
    Taking the scalar product between \eqref{reduced_system} and $\frac{c}{b}\partial_t v$, we find:
    \begin{equation*}
        \left\|\sqrt{\frac{c}{b}}\partial_t v\right\|_{L^2}^2 = \left(\partial_t v\middle|-\frac{c}{b}\M v\right) + \left(\sqrt{c}v_t| \sqrt{c}v\right)
        -\left(\sqrt{c}v_t | \sqrt{c}v\right)\left(\sqrt{c}v|\sqrt{c}v\right).
    \end{equation*}

    Define 
    \begin{equation*}
        F:w\in \dom_E(\MB)\mapsto \frac12\left(w\middle|-\frac{c}{b}\M w\right) + \frac12\|\sqrt{c}w\|_{L^2}^2-\frac14\|\sqrt{c}w\|_{L^2}^4.
    \end{equation*}
    This mapping is of class $\c^1$ on its domain.
    Using the self-adjointness of $-\frac{c}{b}\MB$, its G\^{a}teaux derivative at $w\in \dom_E(\MB)$
    in the direction $h\in \dom_E(\MB)$ such that $w+h\in \dom_E(\MB)$ is
    \begin{equation*}
        DF(w)(h) = \left(h\middle|-\frac{c}{b}\M w\right) + \left(\sqrt{c}w|\sqrt{c}h\right)-\left(\sqrt{c}w|\sqrt{c}w\right)\left(\sqrt{c}w|\sqrt{c}h\right).
    \end{equation*}

    In general $t\mapsto \partial_t v(t)$ might not be valued in $\dom_E(\MB)$. However, by assumption
    on the coefficients of $\MB$ and on $r$, $b$, $c$, $u_0$, then indeed $t\mapsto\partial_t v(t)$ is valued in 
    $\dom_E(\MB)$.

    It follows from
    \begin{equation*}
        0\leq \left\|\sqrt{\frac{c}{b}}\partial_t v\right\|_{L^2}^2 = DF(v)(\partial_t v)=\frac{\upd}{\upd t}F(v)
    \end{equation*}
    that $t\mapsto F(v(t))$ is a nondecreasing function. 
    
    Since the terms $\frac12\|\sqrt{c}v(t)\|_{L^2}^2$ and 
    $\frac14\|\sqrt{c}v(t)\|_{L^2}^4$ are bounded by virtue of Theorem \ref{thm:uniform_bound_dynamical_system},
    it follows that one of the following two possibilities is true:
    \begin{enumerate}
        \item $\left(v(t)\middle|-\frac{c}{b}\M v(t)\right)\to +\infty$ as $t\to+\infty$;
        \item $F(v(t))$ converges to a finite limit.
    \end{enumerate}

    Remark that taking the scalar product between \eqref{reduced_system} and $\frac{c}{b}v$ leads to the identity
    \begin{equation*}
        F(v(t))=\frac12\partial_t\left(\left\|\sqrt{\frac{c}{b}}v(t)\right\|_{L^2}^2\right)+\frac14\|\sqrt{c}v(t)\|_{L^2}^4\quad\text{for all }t>0.
    \end{equation*}
    Thus if $F(v(t))\to+\infty$, then so does the derivative of the function 
    $t\mapsto\left\|\sqrt{\frac{c}{b}}v(t)\right\|_{L^2}^2$. In particular it is bounded from below by, say, $1$,
    in a neighborhood of $+\infty$. Consequently, the function $t\mapsto\left\|\sqrt{\frac{c}{b}}v(t)\right\|_{L^2}^2$ 
    diverges to $+\infty$. But this contradicts the \textit{a priori} bound of Theorem 
    \ref{thm:uniform_bound_dynamical_system}. Therefore $F(v(t))$ converges to a finite limit $a\in\R$.

    Since $F(v)$ also satisfies the identity
    \begin{equation*}
        F(v(t))=F(v_0)+\int_0^t \left\|\sqrt{\frac{c}{b}}\partial_t v(t')\right\|_{L^2}^2\upd t'\quad\text{for all }t>0,
    \end{equation*}
    we deduce that $\int_0^t \left\|\sqrt{\frac{c}{b}}\partial_t v\right\|_{L^2}^2\upd t$ converges to a finite limit
    as $t\to+\infty$. For any $\varepsilon>0$, by virtue of $\int_0^{+\infty} \left\|\sqrt{\frac{c}{b}}\partial_t v\right\|_{L^2}^2\upd t<+\infty$,
    there exists $t_\varepsilon>0$ such that $\left\|\sqrt{\frac{c}{b}}\partial_t v(t_\varepsilon)\right\|_{L^2}=\varepsilon$.

    Since $-\frac{c}{b}\MB$ has compact resolvent, by composition, $-\MB$ has compact resolvent as well, and 
    then the semigroup $(\T(t))_{t\geq 0}$ is immediately compact. By Lipschitz continuity of $\r$, 
    it follows that $v(t_\varepsilon)$ converges as $\varepsilon\to 0$, up to extraction of a subsequence \cite[Theorem 3.3.6]{Henry_1981}. 
    Similarly, by assumption on the coefficients of $\MB$ and on $r$, $b$, $c$, $u_0$, up to extraction
    of another subsequence, $\partial_t v(t_\varepsilon)$ converges.
    In view of $\left\|\sqrt{\frac{c}{b}}\partial_t v(t_\varepsilon)\right\|_{L^2}=\varepsilon$, the 
    limit of $v(t_\varepsilon)$ is a stationary solution of \eqref{reduced_system}.

    This stationary solution is either $0_E$ or $1_E$. 

    By assumption on $\frac{c}{b}\MB$, for all $w\in\dom_E(\MB)$, 
    \begin{equation*}
        F(w)\geq \frac12\|\sqrt{c}w\|_{L^2}^2-\frac14\|\sqrt{c}w\|_{L^2}^4 = \frac12\|\sqrt{c}w\|_{L^2}^2\left(1-\frac12\|\sqrt{c}w\|_{L^2}^2\right).
    \end{equation*}
    Hence, in the set $\left\{\|\sqrt{c}w\|_{L^2}<\sqrt{2}\right\}\setminus\{0\}$, the values of $F$ are positive. 
    Assume by contradiction that the limiting stationary solution as $\varepsilon\to 0$ is $0_E$. Then, on one hand, 
    by continuity of $t\mapsto F(v(t))$ (which follows from Property \ref{property:strong_continuity}, 
    despite possible discontinuities of $F$ due to the unboundedness of $\M$), $a=F(0)=0$. 
    On the other hand, $\|\sqrt{c}v(t_\varepsilon)\|_{L^2}<\sqrt{2}$ for sufficiently small values of $\varepsilon$. 
    If $v(t_\varepsilon)=0_E$ for some $\varepsilon$, then,
    by uniqueness of the solution of \eqref{reduced_system}, $v(t)=0$ for all $t\geq 0$, which contradicts $v_0\neq 0$.
    Thus $F(v(t_\varepsilon))>0$ for all $\varepsilon$ sufficiently small. 
    But then by monotonicity $F(v(t))\geq F(v(t_\varepsilon))>0$ for all
    $t\geq t_\varepsilon$, which contradicts $\lim_{t\to+\infty}F(v(t))=0$.
    
    Therefore the limiting stationary solution as $\varepsilon\to 0$ is $1_E$.

    By virtue of Theorem \ref{thm:LAS}, which can be applied due to the assumptions on the resolvent of $-\frac{c}{b}\MB$,
    $1_E$ is locally asymptotically stable. Let $\mathcal{V}\subset\dom_E(\MB)$ be an open neighborhood of $1_E$ in the topology of $H$ 
    such that $1_E$ attracts all trajectories that enter $\mathcal{V}$. Then, for a sufficiently small $\varepsilon>0$, 
    $v(t_\varepsilon)\in\mathcal{V}$, and this ends the proof.
\end{proof}

\section{Acknowledgements}
L. G. acknowledges support from the ANR via the project Indyana under grant agreement ANR-21-CE40-0008.
J. C. and L. G. acknowledge support from the ANR via the project Reach under grant agreement ANR-23-CE40-0023-01. They also
acknowledge support from the CNRS via the IRN ReaDiNet. 

This project has been ongoing for several years and, during this period of time, several colleagues have been solicited for insights and ideas and to
help identify dead ends or formulate conjectures. The authors thank warmly all these colleagues, who will recognize themselves.

\bibliographystyle{abbrv}
\bibliography{ref-msmod}

\end{document}